\documentclass{amsart}

\usepackage[linecolor=black]{todonotes}
\usepackage{blindtext}
\usepackage{tcolorbox}
\usepackage{xypic}
\usepackage[pdf]{pstricks}
\usepackage{pst-plot}
\usepackage{url}


\usepackage{tikz} 
\usetikzlibrary{matrix,arrows, calc, plotmarks}
\usetikzlibrary{decorations,decorations.text, decorations.pathmorphing}  

\newtheorem{thm}{Theorem}[section]
\newtheorem{prop}[thm]{Proposition}
\newtheorem{cor}[thm]{Corollary}
\newtheorem{lemma}[thm]{Lemma}
\newtheorem{conj}[thm]{Conjecture}

\theoremstyle{definition}  
\newtheorem{defn}[thm]{Definition}
\newtheorem{example}[thm]{Example}
\newtheorem{remark}[thm]{Remark}

\newcommand{\A}{\mathbb{A}}			
\newcommand{\C}{\mathbb{C}}			
\newcommand{\F}{\mathbb{F}} 			
\newcommand{\Z}{\mathbb{Z}} 			

\newcommand{\gammasmash}{\underset{\Gamma_\star S^0}{\wedge}}
\newcommand{\gammaotimes}{\underset{\Gamma_\star S^0}{\otimes}}

\newcommand{\map}{\rightarrow}
\renewcommand{\lto}{\rightarrow}

\newcommand{\tmf}{\mathit{tmf}}
\newcommand{\mmf}{\mathit{mmf}}
\newcommand{\Gast}{\Gamma_{\star}}			
\newcommand{\Spt}{\textbf{Sp}} 				
\newcommand{\SptZ}{\Spt^{\Zop}}			
\newcommand{\Xstdo}{X_{\star}^{\bullet}}
\newcommand{\Ystdo}{Y_{\star}^{\bullet}}
\newcommand{\Zop}{{\Z^{op}}}

\newcommand{\calC}{\mathcal{C}}
\newcommand{\calD}{\mathcal{D}}
\newcommand{\sslash}{/\kern -0.2em/}

\DeclareMathOperator{\colim}{colim} 			
\DeclareMathOperator{\Ext}{Ext}				
\DeclareMathOperator{\hocolim}{hocolim} 		
\DeclareMathOperator{\holim}{holim} 			
\DeclareMathOperator{\id}{id}				
\DeclareMathOperator{\Map}{Map}			
\DeclareMathOperator{\Mod}{Mod}			
\DeclareMathOperator{\Sq}{Sq}
\DeclareMathOperator{\Tot}{Tot} 			

\begin{document}

\title{$\C$-motivic modular forms}
\author{Bogdan Gheorghe}
\address{Max-Planck-Institut f\"ur Mathematik\\
53111 Bonn, Germany}
\email{gheorghebg@mpim-bonn.mpg.de}

\author{Daniel C.\ Isaksen}
\address{Department of Mathematics\\
Wayne State University\\
Detroit, MI 48202, USA}
\email{isaksen@wayne.edu}
\thanks{The second author was supported by NSF grant DMS-1606290.}

\author{Achim Krause}
\address{WWU M\"unster\\
Einsteinstr. 62, 48149 M\"unster, Germany}
\email{achim.t.krause@gmail.com}

\author{Nicolas Ricka}
\address{IRMA, 7 Rue Ren\'e Descartes, 67000 Strasbourg, France}
\email{n.ricka@unistra.fr}

\subjclass[2010]{Primary 14F42, 55N34, 55S10;
Secondary 55Q45, 55T15}

\keywords{motivic homotopy theory, 
motivic modular forms, 
motivic Steenrod algebra, 
Adams-Novikov spectral sequence,
topological modular forms
}

\date{\today}

\begin{abstract}
We construct a topological model for cellular, $2$-complete, stable 
$\C$-motivic homotopy theory that uses no algebro-geometric foundations.
We compute the Steenrod algebra in this context, and we construct
a ``motivic modular forms" spectrum over $\C$.
\end{abstract}

\maketitle

\section{Introduction}

The topological modular forms spectrum $\tmf$ \cite{tmfbook}
detects a significant portion of the stable homotopy groups.
Classes detected by $\tmf$ are viewed as well-understood,
while classes not detected by $\tmf$ are viewed
as more exotic.  Similarly, Adams differentials, or Adams-Novikov
differentials, detected by $\tmf$ are well-understood.
From this perspective, the essential property of $\tmf$
is that its cohomology is the quotient $A\sslash A(2)$ of the
Steenrod algebra $A$ by the subalgebra $A(2)$ generated
by $\Sq^1$, $\Sq^2$, and $\Sq^4$.
This means that the cohomology of $A(2)$ is the $E_2$-page
of the Adams spectral sequence for $\tmf$, so this spectral
sequence can be computed effectively.

For the purposes of computing motivic stable homotopy groups,
it is desirable to have an analogous ``motivic modular forms"
spectrum $\mmf$ whose cohomology is $A\sslash A(2)$, in the motivic sense.
The existence of such a motivic spectrum over $\C$
immediately resolves
the status of many possible differentials in the
classical Adams spectral sequence \cite{IWX18}.
Such a motivic spectrum was first considered (hypothetically)
in \cite{Isaksen-A(2)}.

One of the main goals of this manuscript is to establish
the existence of $\mmf$ in the $\C$-motivic context.
Our techniques heavily depend on an algebraically closed
base of characteristic zero, but our intuition is that 
motivic modular forms ought to exist in much greater generality.

\begin{conj}
A motivic modular forms spectrum exists over any smooth base.
\end{conj}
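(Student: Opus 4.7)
The plan is to pursue one of two complementary strategies. In the base-change approach, one would first aim to construct $\mmf$ over $\mathrm{Spec}(\Z)$ (or $\mathrm{Spec}(\Z[1/6])$) and then pull back along the structure map $S\to\mathrm{Spec}(\Z)$ of any smooth base $S$. Over $\mathrm{Spec}(\Z)$, the natural route is a motivic refinement of the Goerss--Hopkins--Miller--Lurie construction of $\tmf$: produce a sheaf of $E_\infty$-ring motivic spectra on the moduli stack of elliptic curves with respect to an appropriate motivic topology, and define $\mmf$ as its global sections. The pullback to $S$ would automatically inherit the cohomological characterization that defines $\mmf$, provided motivic cohomology with $A\sslash A(2)$-coefficients has sufficiently nice base-change behavior along smooth morphisms.

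In the alternative obstruction-theoretic approach, one would try to realize $A\sslash A(2)$ as the cohomology of an actual motivic spectrum over $S$ by inductively killing generators in an Adams-type tower, in the spirit of the classical construction of $\tmf$ from $A(2)$-modules. By the work of Voevodsky and Hoyois--Kelly--\O stv\ae r, the motivic Steenrod algebra over a general smooth base has a richer structure, involving Milnor K-theory of the base, so both the definition of $A(2)$ and the relevant $\Ext$ computations become base-sensitive. A preliminary step would be to pin down the correct motivic analogue of $A(2)$ over $S$ and verify that its $\Ext$ groups supply the expected obstruction pattern; one would hope that over a sufficiently nice $S$, \'etale or Nisnevich locally, the situation reduces to the $\C$-motivic case handled in this paper.

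The principal obstacle I anticipate is the realization question at the top of the obstruction tower. Classically this is broken open by the input of elliptic cohomology from the moduli stack of elliptic curves, and an analogous geometric input seems unavoidable in the motivic setting. Consequently the moduli-theoretic strategy looks more promising, but it requires a working theory of motivic elliptic cohomology, a theory of motivic formal groups to interpret the sheaf of cohomology theories, and a motivic Goerss--Hopkins obstruction theory strong enough to produce $E_\infty$-structures sheaf-theoretically. Building up this machinery, rather than performing any one explicit computation, is where I would expect the bulk of the work to concentrate; in particular, controlling the descent from local $\C$-points to a global smooth base is the step most likely to require genuinely new ideas.
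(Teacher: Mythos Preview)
The statement you are attempting to prove is a \emph{conjecture}, not a theorem: the paper does not prove it and offers no argument for it. The paper's only contribution toward this conjecture is the special case of the base $\mathrm{Spec}(\C)$ (more precisely, the cellular $2$-complete $\C$-motivic category), and even there the method is entirely unlike either of your proposed strategies. Rather than a moduli-theoretic construction or an obstruction-theoretic realization of $A\sslash A(2)$, the paper builds a purely topological model $\Mod_{\Gast S^0}$ for cellular $\C$-motivic spectra using filtered spectra and the Adams--Novikov tower, defines $\mmf$ as $\Gast \tmf$ inside that model, and only afterward identifies the model with motivic homotopy theory. The authors themselves remark that their technique is specific to an algebraically closed base of characteristic zero and that a general construction would likely require exactly the kind of machinery you sketch (motivic elliptic cohomology, motivic Goerss--Hopkins theory).

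So there is nothing for your proposal to be compared against: you have written a reasonable research outline for an open problem, not an alternative to an existing proof. If your assignment was to reproduce or reconstruct the paper's argument for this statement, the correct answer is that no such argument exists; the statement is left open.
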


One possible approach to constructing $\mmf$ is to follow the
classical construction of $\tmf$ in the motivic context.
This would require a careful understanding of motivic elliptic
cohomology theories and moduli spaces
of $E_\infty$-structures.  

We avoid these technical difficulties with a novel approach.
We first construct a category of $\Gast S^0$-modules that is
entirely topological in nature.  Foundationally, it depends
only on infinite sequences of classical spectra, with no reference
to smooth schemes, affine lines, $\A^1-0$, etc.

The category of $\Gast S^0$-modules is carefully engineered
to mimic the computational properties of $\C$-motivic
stable homotopy theory.  
We define an ``Eilenberg-Mac Lane" 
$\Gast S^0$-module $\Gast H\F_2$, and we compute that its
homotopy groups are of the form $\F_2[\tau]$, i.e., the same
as the motivic cohomology of $\C$.
We also directly compute the co-operations 
for $\Gast H\F_2$, i.e., the Steenrod algebra in the
context of $\Gast S^0$-modules, and we obtain an answer
that is identical to the $\C$-motivic Steenrod algebra.
We emphasize that our construction and our calculations are independent
of the much harder motivic calculations of Voevodsky \cite{VoeZ2}
\cite{Voered} \cite{VoeEM}.

The key idea relies on an observation from \cite{HKO11}
about the structure of the $\C$-motivic Adams-Novikov spectral
sequence (see also \cite[Chapter 6]{StableStems}).  
The weights in this spectral sequence follow a simple pattern.
The object $\Gast S^0$ is defined in such a way that its 
bigraded homotopy groups are computed by an identical spectral sequence.

Having constructed the category of $\Gast S^0$-modules, we study
the $\Gast S^0$-module $\Gast \tmf$, and we show that this object
has the desired computational properties, i.e., that its cohomology
is $A\sslash A(2)$ in the $\Gast S^0$-module context.

Finally, we prove that the homotopy category of
$\Gast S^0$-modules is equivalent to the 
$2$-complete cellular $\C$-motivic stable homotopy category.
Thus, $\Gast \tmf$ corresponds to some cellular $\C$-motivic
spectrum that deserves to be called $\mmf$.

Motivic homotopy theory has been at the center of a recent breakthrough
in the computation of stable homotopy groups of spheres 
\cite{StableStems} \cite{IWX18}.
From the perspective of that project,
the category of $\Gast S^0$-modules makes 
motivic homotopy theory no longer relevant.  
In particular, the stable homotopy group computations of
\cite{IWX18} no longer logically
depend on Voevodsky's computations of the motivic
cohomology of a point, nor on his computation of the motivic
Steenrod algebra.
For this reason, we completely avoid results and constructions
from motivic homotopy theory until Section \ref{sctn:compare}.

On the other hand, the theory of $\Gast S^0$-modules does not
make motivic homotopy theory obsolete.  
Since $\Gast S^0$-modules only captures the cellular motivic spectra,
it misses phenomena of arithmetic interest.  
Moreover, we currently cannot construct analogous models for
motivic homotopy theory over bases other than $\C$, although it
seems plausible that at least $\mathbb{R}$-motivic cellular
spectra have a topological model.

Our construction of $\Gast S^0$-modules could potentially
be generalized to other contexts.  
The complex cobordism spectrum $MU$ is built into the definitions
from the very beginning, but one might attempt to use other
cohomology theories in the same way.  Moreover, the basic
construction could be iterated to obtain interesting
multi-graded homotopy theories.

The cofiber of $\tau$ is a very interesting motivic spectrum
with curious properties \cite{GheCt} \cite{GWX}.
It would be interesting to study the cofiber of $\tau$
from the perspective of $\Gast S^0$-modules.
We do not carry out this investigation in this manuscript
because it is not central to our goal of constructing
$\mmf$.  On the other hand, it is possible that
$\Gast S^0$-modules are useful for understanding exotic
motivic periodicities \cite{GheKwn} \cite{Krause18},
and the cofiber of $\tau$ would play a critical role in that
pursuit.

The functor $\Gast$ provides a new tool for producing motivic spectra
from classical spectra.  If $X$ is a classical cell complex with
cells in only even dimensions, 
then $\Gast X$ is a motivic cell complex in which
$2k$-dimensional cells of $X$ correspond to $(2k,k)$-dimensional
cells of $\Gast X$.
The behavior of odd-dimensional cells under $\Gast$
is a bit more complicated.
Table \ref{tab:Gast} shows that many
of the motivic spectra commonly studied can be constructed with this
tool.
\begin{table}
\caption{Some values of $\Gast$
\label{tab:Gast}}
\begin{center}
\begin{tabular}{ll}
$X$ & $\Gast X$ \\
\hline
$H\F_2$ & $H\F_2$ \\
$H\Z$ & $H\Z$ \\
$KU$ & $KGL$ \\
$ku$ & $kgl$ \\
$KO$ & $KQ$ \\
$ko$ & $kq$ \\
$BP$ & $BPGL$ \\
$BP \langle n \rangle$ & $BPGL \langle n \rangle$ \\
$MU$ & $MGL$ \\
\end{tabular}
\end{center}
\end{table}
On the other hand, the functor $\Gast$
does not seem to interact well with
$\eta$-periodicization; see Remark \ref{rem:Gamma-eta}
for more discussion.

We mention recent work of Pstragowski \cite{Pstragowski18}
that constructs a topological model for 
cellular $\C$-motivic stable homotopy theory
using different foundational techniques.  We have not attempted
to compare our construction directly, but we suspect that such
a direct comparison is possible.
We also mention work of Heine \cite{Heine17} that is more formal.

\subsection{Organization}

We begin in Section \ref{sec:filteredspectra} 
with a discussion of a certain
diagram category of spectra.  In Section \ref{sctn:Gamma},
we define the functor $\Gast$ from ordinary spectra to
filtered spectra, and we develop the key properties of this functor
that allows for a rich computational theory
of $\Gast S^0$-modules.

In Section \ref{sctn:Steenrod-algebra}, 
we begin our computations by calculating
the Steenrod alegbra for $\Gast S^0$-modules.
We carry these calculations further in Section \ref{sctn:mmf}
when we study $\Gast \tmf$.

Finally, in Section \ref{sctn:compare}, we show that
the homotopy theory of $\Gast$-modules is equivalent to the
$2$-complete cellular $\C$-motivic stable homotopy category.

\subsection{Notation and conventions}

Throughout the article, we adopt the perspective of
$\infty$-stable categories for our homotopy theories.
We are working entirely in a $2$-complete setting.
All objects and categories are implicitly $2$-complete,
even though the notation will not reflect that assumption.

We use the symbol $\Z$ in two different ways.
Sometimes it is an indexing category for a diagram, in which
case we are thinking of $\Z$ as a partially ordered set
in the standard way.  Sometimes it is the coefficients
of a calculation, in which case it really means the 
$2$-adic integers $\Z_2$.  This latter usage is consistent
with our implicit use of $2$-completions.

The category $\Spt$ is the usual 
stable $\infty$-category of ($2$-complete) spectra.

We will use different gradings in different contexts.
For the sake of clarity, we adopt the following conventions:
\begin{enumerate}
\item
$\ast$ denotes an integer grading corresponding to the usual
grading on spectra.  For example, this grading is used for 
stable homotopy groups of classical spectra.
\item
$\star$ represents an indexing in $\Zop$.
This symbol is used exclusively for filtered spectra, i.e.,
functors from the nerve $N(\Zop)$ to ($2$-complete) spectra.
For example, the notation $X_\star$ indicates a functor
\[
X: N(\Zop) \map \Spt.
\]
\item
$\bullet$ denotes the cosimplicial degree of a cosimplicial object. 
\end{enumerate}

\subsection{Acknowledgements}

We thank Saul Glasman for some helpful hints about $E_\infty$-rings
in stable $\infty$-categories.  We also thank Piotr Pstragowski for 
suggesting the structure of some of the computational arguments in 
Section \ref{sctn:Steenrod-algebra}.

\section{Filtered spectra and a colocalization} \label{sec:filteredspectra}

In this section, we construct a 
closed symmetric monoidal stable $\infty$-category of
filtered spectra.
The objects that we are considering come entirely from the classical stable homotopy category.
The monoidal structure arises from Day convolution.

Note that we are using only a few formal properties of the category $\Spt$ of spectra.  Our construction could be generalized to filtered objects in any presentable closed symmetric monoidal stable $\infty$-category,
but we work only with $\Spt$ for the sake of simplicity.

\subsection{Filtered spectra}

\begin{defn}
The category of \emph{filtered spectra} is the 
$\infty$-category $\SptZ$ of $\infty$-functors from the $\infty$-nerve of the category $\Zop$, considered as a poset, to the category $\Spt$ of spectra. 
\end{defn}

The objects of $\SptZ$ are $\Zop$-diagrams in $\Spt$ up to coherent 
homotopy, in the following sense.
A filtered spectrum $X_\star$ is a sequence $\{ X_w\ |\ w \in \Zop \}$
of spectra,
together with a choice of morphisms $X_w \lto X_{w'}$, for all $w \geq w'$, and a choice of coherent homotopies relating compositions of the structure morphisms. 
We will typically denote a filtered spectrum by a sequence
\begin{equation*}
X_\star = \cdots \lto X_{1} \lto X_0 \lto X_{-1} \lto \cdots, 
\end{equation*}
in which the homotopies are implicit.
Similarly, we describe a morphism $X_\star \lto Y_\star$ 
in this category by a sequence 
$\{ X_w \lto Y_w\ |\ w \in \Zop \}$ of 
coherently homotopically 
compatible maps.

In $\SptZ$, the weak equivalences are exactly the pointwise weak equivalences, i.e., the maps $f \colon X_\star \lto Y_\star$ 
such that each component $f_w \colon X_w \lto Y_w$ induces an isomorphism on homotopy groups. In particular, an object $X_\star$ is contractible precisely when every spectrum $X_w$ is contractible. 
Moreover, homotopy limits and homotopy colimits
are computed pointwise in this diagram category.

In the category $\SptZ$, we have a bigraded family of 
sphere objects, as described in Definition \ref{defn:SptZ-spheres}.
We will show later in Lemma \ref{lemma:spheresgenerate}
that these spheres generate all filtered spectra in the appropriate
homotopical sense.

\begin{defn}
\label{defn:SptZ-spheres}
The sphere $S^{s,w}$
of bidegree $(s,w)$ is the object of $\SptZ$ defined by
\begin{equation*}
\cdots \lto \ast \lto \ast \lto S^s \stackrel{\id}{\lto} S^s 
\stackrel{\id}{\lto} \cdots,
\end{equation*}
where $S^{s,w}_v$ is $\ast$ in if $v > w$ and
$S^{s,w}_v$ is $S^s$ if $v \leq w$.
\end{defn}

Recall that such categories of functors canonically possess a closed symmetric monoidal structure, given by the Day convolution product (see \cite{Glas16}, \cite[Section 2.2.6]{HA}), which is induced by the monoidal structure of the source category. Explicitly, in filtered degree $w$ we have
\begin{equation} \label{eq:Dayconv}
(X_\star \otimes Y_\star)_w \simeq \hocolim_{i+j \geq w} X_i \wedge Y_j.
\end{equation}
The unit for this product is the sphere
$S^{0,0}$ of Definition \ref{defn:SptZ-spheres}.
Moreover, the convolution product commutes with homotopy colimits in each variable \cite[Lemma 2.13]{Glas16}.
We denote by $\Sigma^{s,w}$ the suspension endofunctor 
\begin{equation*}
S^{s,w} \otimes (-) \colon \SptZ \lto \SptZ.
\end{equation*}
More concretely, $\Sigma^{s,w} X_\star$ is the filtered spectrum
such that $(\Sigma^{s,w} X_\star)_v$ is
$\Sigma^s (X_{v-w})$.

The ``formal suspension" in $\SptZ$ is the same as $\Sigma^{1,0}$,
i.e., there is a cofiber sequence
\[
X_\star \lto * \lto \Sigma^{1,0} X_\star
\]
for every filtered spectrum $X_\star$.

The category $\SptZ$ is tensored and cotensored over spectra in the evident point-wise way.

\begin{defn}
\label{defn:colim-star}
Let $\colim_\star: \SptZ \lto \Spt$
be the functor that takes a filtered spectrum $X_\star$
to the spectrum $\colim_w X_w$.
\end{defn}

We are now ready to show that the bigraded spheres generate the category of filtered spectra. 

\begin{defn} \label{de:filteredstems}
The stable homotopy group $\pi_{s,w} X_\star$ is the
abelian group
$[ S^{s,w}, X_\star ]$.
\end{defn}

The direct sum $\pi_{\ast, \star} X$ of all
stable homotopy groups of $X$ is a bigraded abelian group.

\begin{remark}
\label{remark:pi_s,w}
It follows from a standard adjunction argument that 
$\pi_{s,w} X_\star$ is equal to the 
pointwise homotopy group $\pi_s X_w$.
\end{remark}

\begin{lemma} \label{lemma:spheresgenerate}
In the $\infty$-category $\SptZ$, 
a map is a weak equivalence if and only if 
it induces an isomorphism on $\pi_{\ast,\star}$. 
Equivalently, $\Spt^{\Zop}$ is generated under homotopy colimits by the spheres $S^{0,w}$ for $w \in \Zop$. 
\end{lemma}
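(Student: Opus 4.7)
My plan is to reduce both formulations of the lemma to Remark \ref{remark:pi_s,w}, which identifies $\pi_{s,w} X_\star$ with the pointwise homotopy group $\pi_s X_w$, together with the earlier observation that weak equivalences in $\SptZ$ are exactly the pointwise equivalences.

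For the first formulation (detection by $\pi_{\ast, \star}$), the argument is a short chain of equivalences. A map $f \colon X_\star \lto Y_\star$ is a weak equivalence if and only if each component $f_w \colon X_w \lto Y_w$ is a weak equivalence of spectra; this in turn is equivalent to each $f_w$ inducing an isomorphism on every $\pi_s$; and by the remark this is equivalent to $f$ inducing an isomorphism on every $\pi_{s,w}$. No extra input is needed here.

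For the second formulation (generation by the spheres $\{S^{0,w}\}_{w\in\Zop}$), I would appeal to the standard fact that in a presentable stable $\infty$-category, a set of objects generates under homotopy colimits if and only if the collection of all of their shifts jointly detects equivalences through the mapping functor $[-,-]$. Applied with generators $S^{0,w}$, this reduces the statement to checking that the functors $[\Sigma^s S^{0,w}, -]$, for $s \in \Z$ and $w \in \Zop$, detect equivalences. Using the explicit formula $(\Sigma^{s,0} X_\star)_v \simeq \Sigma^s X_v$ for the iterated formal suspension, one identifies $\Sigma^s S^{0,w} \simeq S^{s,w}$ level-wise, so that $[\Sigma^s S^{0,w}, X_\star] = \pi_{s,w} X_\star$. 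The detection condition then becomes precisely the $\pi_{\ast,\star}$-detection established in the first part.

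The step that deserves the most care is the identification of the intrinsic stable suspension in $\SptZ$ with the formal suspension $\Sigma^{1,0}$, and the resulting equivalence $\Sigma^s S^{0,w} \simeq S^{s,w}$. The text has flagged the former explicitly as a cofiber sequence, and the latter follows from the concrete description of $\Sigma^{s,w}$ already recorded in the section, so this verification is straightforward. Once these compatibilities are in place, the lemma is essentially formal.
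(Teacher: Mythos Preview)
Your proposal is correct and follows essentially the same approach as the paper: the first claim is reduced via Remark~\ref{remark:pi_s,w} to the pointwise characterization of weak equivalences, and the second claim is deduced from the first via the standard ``detection implies generation'' principle in presentable stable $\infty$-categories (the paper simply cites \cite[Theorem~1.2.1]{HPS} for this step, while you spell it out slightly more). Your additional care in identifying $\Sigma^s S^{0,w} \simeq S^{s,w}$ is a reasonable elaboration but not a departure from the paper's argument.
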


\begin{proof}
By definition, a map $f \colon X_\star \lto Y_\star \in \SptZ$ is a weak equivalence if and only if
each $f_w \colon X_w \lto Y_w$
is a weak equivalence,
and $f_w$ is a weak equivalence if and only if 
it induces a $\pi_*$ isomorphism. 
Finally, Remark \ref{remark:pi_s,w} establishes the first claim.

The second claim follows from a standard argument (see for instance \cite[Theorem 1.2.1]{HPS}), since the first claim implies
that $X_\star$ is equivalent to the zero object in $\SptZ$ if and only if
the bigraded abelian group $\pi_{\ast, \star} X_\star$ is zero.
\end{proof}

\subsection{$t$-structure}
\label{subsctn:t-structure}

\begin{defn}
\label{defn:t-structure}
\mbox{}
\begin{enumerate}
\item
Let $\SptZ_{\geq 0}$ be the full subcategory of
$\SptZ$ consisting of filtered spectra $X_\star$ such that
$\pi_{s,w} X_\star = 0$ for $s < 2w$, i.e., if
$\pi_s X_w = 0$ for $s < 2w$.
\item
Let $\SptZ_{\leq 0}$ be the full subcategory of
$\SptZ$ consisting of filtered spectra $X_\star$ such that
$\pi_{s,w} X_\star = 0$ for $s > 2w$, i.e., if
$\pi_s X_w = 0$ for $s > 2w$.
\end{enumerate}
\end{defn}

\begin{prop}
\label{prop:t-structure}
Definition \ref{defn:t-structure} equips $\SptZ$ with a
t-structure.
\end{prop}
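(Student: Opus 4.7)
The plan is to verify the three defining axioms of a t-structure directly. Writing $\Sigma := \Sigma^{1,0}$ for the ``formal suspension'' in $\SptZ$, I must show:
(i) $\Sigma(\SptZ_{\geq 0}) \subseteq \SptZ_{\geq 0}$ and $\Sigma^{-1}(\SptZ_{\leq 0}) \subseteq \SptZ_{\leq 0}$;
(ii) $\Map_{\SptZ}(X_\star, Y_\star) \simeq \ast$ whenever $X_\star \in \SptZ_{\geq 0}$ and $Y_\star \in \SptZ_{\leq -1} := \Sigma^{-1}\SptZ_{\leq 0}$;
(iii) every $X_\star$ sits in a fiber sequence $\tau_{\geq 0} X_\star \to X_\star \to \tau_{\leq -1} X_\star$ with the two outer terms lying in $\SptZ_{\geq 0}$ and $\SptZ_{\leq -1}$ respectively.

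Axiom (i) is immediate from $(\Sigma X_\star)_v = \Sigma X_v$ together with Remark \ref{remark:pi_s,w}: applying $\Sigma$ shifts all pointwise homotopy groups up by one, which preserves the vanishing ranges of Definition \ref{defn:t-structure}. For axiom (ii), I would translate the definitions pointwise: $X_\star \in \SptZ_{\geq 0}$ means precisely that each $X_v$ is $2v$-connective in $\Spt$, and $Y_\star \in \SptZ_{\leq -1}$ means each $Y_v$ is $(2v-1)$-truncated. Mapping spectra in the diagram $\infty$-category $\SptZ$ are computed as limits over $N(\Zop)$ of the pointwise mapping spectra $\Map_{\Spt}(X_v, Y_v)$, and the classical orthogonality between connective and truncated spectra in $\Spt$ makes each of these factors contractible; hence the global mapping spectrum vanishes as well.

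For axiom (iii), I would construct the truncations pointwise from the classical Postnikov tower in $\Spt$: set $(\tau_{\geq 0} X_\star)_v := \tau_{\geq 2v} X_v$ and $(\tau_{\leq -1} X_\star)_v := \tau_{\leq 2v-1} X_v$. The filtered structure is forced by the universal property of the connective cover: for $v \geq v'$ the composite $\tau_{\geq 2v} X_v \to X_v \to X_{v'}$ factors essentially uniquely through $\tau_{\geq 2v'} X_{v'} \to X_{v'}$, because $\tau_{\geq 2v} X_v$ is $(2v-1)$-connective and therefore also $(2v'-1)$-connective. Since limits and colimits in $\SptZ$ are computed pointwise, the pointwise fiber sequences $\tau_{\geq 2v} X_v \to X_v \to \tau_{\leq 2v-1} X_v$ assemble into the required fiber sequence in $\SptZ$, and the outer terms lie in the correct subcategories by construction.

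The main obstacle is coherence: upgrading the pointwise truncations into bona fide $\infty$-functors $\SptZ \to \SptZ$ rather than just a collection of homotopy classes of maps. A cleaner route is to invoke a standard generation theorem for t-structures on presentable stable $\infty$-categories. Since the spheres $\{S^{s,v} : s \geq 2v\}$ are compact, closed under $\Sigma$, and generate $\SptZ_{\geq 0}$ under colimits and extensions (by Lemma \ref{lemma:spheresgenerate} and direct inspection of $\pi_{s,v}(S^{s',v'})$), they automatically induce a t-structure on $\SptZ$ whose nonnegative and nonpositive parts can be identified with the subcategories of Definition \ref{defn:t-structure} by testing against these compact generators.
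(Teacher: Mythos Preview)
Your overall structure matches the paper's, but step (ii) contains a genuine error. The mapping space in a functor $\infty$-category is \emph{not} a limit over the indexing category of the diagonal terms $\Map(X_v, Y_v)$; it is an end, computed as a homotopy limit over the twisted arrow category. For $N(\Zop)$ this gives
\[
\Map_{\SptZ}(X_\star, Y_\star) \;\simeq\; \holim_{i \geq j} \Map_{\Spt}(X_i, Y_j),
\]
indexed by all pairs $i \geq j$, not just the diagonal. (Indeed, there is no $\Zop$-shaped diagram with values $\Map(X_v, Y_v)$: the structure maps of $X_\star$ and of $Y_\star$ push in opposite directions under $\Map$.) This is precisely the formula the paper writes down. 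The repair is immediate once you have the correct indexing: for $i \geq j$ the spectrum $X_i$ is $2i$-connective and $Y_j$ is $(2j-1)$-truncated, and since $2i \geq 2j > 2j-1$ every term $\Map(X_i, Y_j)$ is contractible by orthogonality for the standard t-structure on $\Spt$, so the homotopy limit is contractible.

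For (i) and (iii) your argument coincides with the paper's. The paper does not address the coherence concern you raise for (iii); it simply draws the diagram of pointwise Postnikov truncations $\tau_{\geq 2w} X_w \to X_w \to \tau_{<2w} X_w$ and observes that the columns are fiber sequences pointwise, hence in $\SptZ$. Your alternative route via a generation theorem for t-structures on presentable stable $\infty$-categories is valid and sidesteps that issue cleanly, though the paper does not take it.
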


\begin{proof}
Suppose that $X_\star$ and $\Sigma Y_\star$ belong to
$\SptZ_{\geq 0}$ and $\SptZ_{\leq 0}$ respectively.
We will show that any map $X_\star \lto Y_\star$ 
is trivial.
The simplicial mapping space $\Map(X_\star, Y_\star)$
is the homotopy limit
\[
\holim_{i \geq j} \Map(X_i, Y_j).
\]
When $i \geq j$, the space $\Map(X_i, Y_j)$ is contractible since
$\pi_n X_i = 0$ if $n < 2 i$ and $\pi_n Y_j = 0$ if $n \geq 2 j$.
Therefore, the homotopy limit is contractible as well.

The subcategory $\SptZ_{\geq 0}$ is closed under suspension
since the suspension functor on filtered spectra is defined 
pointwise.
Similarly, the subcategory
$\SptZ_{\leq 0}$ is closed under desuspension.

Finally, for any filtered spectrum $X_\star$, we have the diagram
\[
\xymatrix{
\ar[r] & \tau_{\geq 4} X_2 \ar[r] \ar[d] & 
\tau_{\geq 2} X_1 \ar[r] \ar[d] & \tau_{\geq 0} X_0 \ar[r] \ar[d] &
\tau_{\geq -2} X_{-1} \ar[r] \ar[d] & \tau_{\geq -4} X_{-2} \ar[r]\ar[d]&
\\
\ar[r] & X_2 \ar[r] \ar[d] & X_1 \ar[r] \ar[d] & 
X_0 \ar[r] \ar[d] & X_{-1} \ar[r] \ar[d] & X_{-2} \ar[r]\ar[d] &
\\
\ar[r] & \tau_{< 4} X_2 \ar[r] & 
\tau_{< 2} X_1 \ar[r] & \tau_{< 0} X_0 \ar[r] &
\tau_{< -2} X_{-1} \ar[r] & \tau_{< -4} X_{-2} \ar[r] &
,
}
\]
where $\tau_{\geq n}$ and $\tau_{< n}$ are the usual connective cover
and Postnikov section functors on spectra.
The top row of the diagram is a filtered spectrum
in $\SptZ_{\geq 0}$, the bottom row is a filtered
spectrum in $\SptZ_{\leq -1}$, and the columns are fiber sequences
of ordinary spectra.
Since fiber sequences of filtered spectra
are defined pointwise,
this diagram defines a fiber sequence of filtered spectra.
\end{proof}

\begin{lemma}
\label{lem:t-structure-smash}
The subcategory $\SptZ_{\geq 0}$ is closed under Day convolution.
\end{lemma}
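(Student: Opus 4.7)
The plan is to verify the claim directly from the explicit Day convolution formula in Equation~\eqref{eq:Dayconv}, together with the standard connectivity estimates for smash products and homotopy colimits of spectra.

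First, I will unwind Definition~\ref{defn:t-structure} via Remark~\ref{remark:pi_s,w}: membership of $X_\star$ in $\SptZ_{\geq 0}$ is equivalent to each $X_w$ being $2w$-connective in the classical sense, i.e.\ $\pi_s X_w = 0$ for $s < 2w$. Given $X_\star, Y_\star \in \SptZ_{\geq 0}$, the goal reduces to showing that $(X_\star \otimes Y_\star)_w$ is $2w$-connective for every $w \in \Z$.

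Next, I will apply the Day convolution formula to write
\[
(X_\star \otimes Y_\star)_w \simeq \hocolim_{i+j \geq w} X_i \wedge Y_j.
\]
For each pair $(i,j)$ with $i+j \geq w$, the smash product $X_i \wedge Y_j$ is the smash of a $2i$-connective spectrum and a $2j$-connective spectrum, hence is $(2i+2j)$-connective, and in particular $2w$-connective. Finally, I will invoke the fact that the full subcategory of $2w$-connective spectra is closed under homotopy colimits in $\Spt$: the inclusion $\Spt_{\geq 2w} \hookrightarrow \Spt$ is left adjoint to the connective cover functor $\tau_{\geq 2w}$ and so preserves colimits. This forces the above homotopy colimit to be $2w$-connective, yielding the claim.

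There is no serious obstacle here; the argument is essentially formal once the Day convolution formula is in hand. The only mildly delicate point is the closure of connective spectra under homotopy colimits, which I would justify either by the adjunction just mentioned or, more concretely, by observing that arbitrary homotopy colimits in $\Spt$ can be built from coproducts and pushouts, both of which manifestly preserve the $2w$-connectivity bound.
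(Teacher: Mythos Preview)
Your argument is correct and follows the same overall strategy as the paper: reduce to showing each $(X_\star\otimes Y_\star)_w$ is $2w$-connective, and use that $X_i\wedge Y_j$ is $2(i+j)$-connective. The only difference is in the last step. The paper presents $(X_\star\otimes Y_\star)_w$ explicitly as the cofiber of a single map
\[
\bigvee_{a+b=w+1} X_a\wedge Y_b \longrightarrow \bigvee_{i+j=w} X_i\wedge Y_j
\]
and reads off the connectivity from the long exact sequence, whereas you work directly with the full homotopy colimit in \eqref{eq:Dayconv} and invoke closure of $\Spt_{\geq 2w}$ under all homotopy colimits via the left adjoint $\Spt_{\geq 2w}\hookrightarrow\Spt$. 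Your route is slightly more conceptual and avoids needing the specific cofiber description of Day convolution over $\Zop$; the paper's route is more explicit but requires that extra identification. Either way the content is the same.
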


\begin{proof}
Let $X_\star$ and $Y_\star$ belong to $\SptZ_{\geq 0}$.
Then $(X_\star \otimes Y_\star)_w$ is the cofiber of the map
\[
\bigvee_{a+b = w+1} X_a \wedge Y_b \lto
\bigvee_{i+j = w}  X_i \wedge Y_j.
\]
The homotopy groups of $X_i \wedge Y_j$ vanish in degrees
less than $2w$ because the homotopy groups of $X_i$ and of $Y_j$
vanish in degrees less than $2i$ and $2j$ respectively.
Similarly the homotopy groups of $X_a \wedge Y_b$
vanish in degrees less than $2(w+1)$.
By inspection of the long exact sequence in homotopy,
it follows that the homotopy groups of
$(X_\star \otimes Y_\star)_w$ also vanish in 
degrees less than $2w$.
\end{proof}

We write $(\tau_{\geq 0})_\star$ for the right adjoint, 
in the appropriate
$\infty$-categorical sense, to the inclusion
$\SptZ_{\geq 0} \lto \SptZ$.
More concretely, $(\tau_{\geq 0})_\star : \SptZ \lto \SptZ_{\geq 0}$ 
is the functor that takes a filtered spectrum $X_\star$ to 
\[
\xymatrix@1{
\cdots \ar[r] & \tau_{\geq 4} X_2 \ar[r] & 
\tau_{\geq 2} X_1 \ar[r] & \tau_{\geq 0} X_0 \ar[r] &
\tau_{\geq -2} X_{-1} \ar[r] & \tau_{\geq -4} X_{-2} \ar[r] &
\cdots
}
\]

\begin{cor}
\label{cor:t-structure-smash}
The truncation functor $(\tau_{\geq 0})_\star$ is
lax symmetric monoidal.
\end{cor}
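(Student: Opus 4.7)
The plan is to deduce this from the general $\infty$-categorical principle that the right adjoint of a symmetric monoidal functor inherits a canonical lax symmetric monoidal structure (see, e.g., Lurie, \emph{Higher Algebra}, Corollary 7.3.2.7). So the task reduces to showing that the inclusion $\iota \colon \SptZ_{\geq 0} \lto \SptZ$, whose right adjoint is $(\tau_{\geq 0})_\star$, carries a symmetric monoidal structure making $\iota$ a symmetric monoidal functor.

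For this, I would equip $\SptZ_{\geq 0}$ with the restricted Day convolution. Concretely, I would verify that $\SptZ_{\geq 0}$ is a symmetric monoidal subcategory of $\SptZ$ by checking two things: (i) that the unit $S^{0,0}$ lies in $\SptZ_{\geq 0}$, which is immediate since for $v \leq 0$ we have $\pi_s (S^{0,0})_v = \pi_s S^0 = 0$ whenever $s < 0 \leq 2v$, and $(S^{0,0})_v$ is contractible for $v > 0$; and (ii) that $\SptZ_{\geq 0}$ is closed under the tensor product, which is exactly the content of Lemma \ref{lem:t-structure-smash}. Together with the fact that $\SptZ_{\geq 0}$ is a full subcategory closed under equivalences, this produces a symmetric monoidal structure on $\SptZ_{\geq 0}$ relative to which $\iota$ is a symmetric monoidal functor.

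Having set this up, the corollary is formal: since $\iota$ is symmetric monoidal and $(\tau_{\geq 0})_\star$ is its right adjoint, the standard $\infty$-categorical adjunction transfer equips $(\tau_{\geq 0})_\star$ with a canonical lax symmetric monoidal structure. Explicitly, the comparison maps
\[
(\tau_{\geq 0})_\star X_\star \otimes (\tau_{\geq 0})_\star Y_\star \lto (\tau_{\geq 0})_\star (X_\star \otimes Y_\star)
\]
are obtained by adjunction from the canonical equivalence $\iota((\tau_{\geq 0})_\star X_\star \otimes (\tau_{\geq 0})_\star Y_\star) \simeq \iota(\tau_{\geq 0})_\star X_\star \otimes \iota(\tau_{\geq 0})_\star Y_\star \lto X_\star \otimes Y_\star$, and coherence is automatic from the adjunction.

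The main subtlety, and really the only one, is making the adjunction-transfer argument properly $\infty$-categorical rather than merely $1$-categorical; but this is by now a standard fact and can simply be cited. No direct computation with the explicit formula for $(\tau_{\geq 0})_\star X_\star$ in terms of the pointwise connective covers $\tau_{\geq 2w} X_w$ should be required.
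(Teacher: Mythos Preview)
Your proposal is correct and follows essentially the same argument as the paper: the paper's proof simply says that Lemma~\ref{lem:t-structure-smash} makes the inclusion $\SptZ_{\geq 0} \hookrightarrow \SptZ$ strong symmetric monoidal, hence its right adjoint $(\tau_{\geq 0})_\star$ is lax symmetric monoidal. Your version is slightly more detailed (you explicitly verify the unit lies in $\SptZ_{\geq 0}$ and cite a precise reference for the adjunction transfer), but the strategy is identical.
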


\begin{proof}
Lemma \ref{lem:t-structure-smash}
says that the inclusion $\SptZ_{\geq 0} \lto \SptZ$
is strong symmetric monoidal.
Therefore, its right adjoint is lax symmetric monoidal.
\end{proof}

\section{The functor $\Gamma_\star$}
\label{sctn:Gamma}

The goal of this section is to define and study a functor
$\Gamma_\star$ from $\Spt$ to $\SptZ$.
The functor is constructed in such a way that it interacts
with the Adams-Novikov spectral sequence in an interesting way.

\begin{defn}
Let $MU^{\bullet +1}$ be the cosimplicial spectrum 
whose $n$th term is the $(n+1)$-fold smash product of $MU$ with
itself, and whose
faces and degeneracies are induced from the multiplication and unit 
of the ring structure on $MU$.
\end{defn}

The cosimplicial spectrum $MU^{\bullet + 1}$ is the usual
tool for constructing the $MU$-based Adams spectral sequence.
More precisely, the Bousfield-Kan spectral sequence that computes
the totalization $\Tot ( X \wedge MU^{\bullet + 1})$ is the
Adams-Novikov spectral sequence that converges to the homotopy of $X$.
Convergence of the Adams-Novikov spectral sequence
just means that 
the totalization $\Tot (X \wedge MU^{\bullet + 1})$
is equivalent to the (appropriately completed) spectrum $X$.

We work with the $MU$-based Adams spectral sequence, rather than
the $BP$-based Adams spectral sequence, because $MU$ has better
formal multiplicative properties than $BP$.  Computationally,
there is no difference between the two perspectives.

\begin{defn}
Let $X$ be a spectrum.
\begin{enumerate}
\item
Let 
$\tau_{\geq 2w} \left( X \wedge MU^{\bullet +1} \right)$ be
the cosimplicial spectrum 
formed by taking the $(2w-1)$-connective cover 
of each term of the cosimplicial spectrum
$X \wedge MU^{\bullet + 1}$.
\item
Define the filtered spectrum $\Gamma_\star X $ such that
each $\Gamma_w X$ is
the totalization
$\Tot \tau_{\geq 2w} \left( X \wedge MU^{\bullet +1} \right)$.
\end{enumerate}
\end{defn}

We now turn to a particular spectral sequence computing the homotopy groups of $\Gamma_\star X$.
We grade $\Ext$ in the form $(s,f)$,
where $f$ is the homological degree and $s+f$ is the total degree.
In other words, $s$ is the stem, and $f$ is the Adams-Novikov filtration.
While inconsistent with historical notation,
this choice aligns well with the usual graphical representations
of the Adams-Novikov spectral sequence.

\begin{prop} \label{pro:anssgasts}
Let $X$ be a spectrum, and let $w \in \Zop$. 
There is a spectral sequence
\begin{equation*}
E_2^{s,f,w}(X) = \left\lbrace 
\begin{matrix} \Ext_{MU_*MU}^{s,f}(MU_* ,MU_* X)  
& \text{ if } s+f \geq 2w \\
0 & \text{ otherwise } \end{matrix} \right.
\end{equation*}
that converges to $\pi_{s,w} \Gamma_\star X$.
These spectral sequences are compatible with the filtered
spectrum structure on $\Gamma_\star X$, 
in the sense that 
the morphism $\Gamma_w X \lto \Gamma_{w-1} X$
induces the evident inclusion 
$E_2^{s,f,w}(X) \lto E_2^{s,f,w-1}(X)$.
\end{prop}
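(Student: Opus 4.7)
The plan is to realize the desired spectral sequence as the Bousfield--Kan spectral sequence associated to the cosimplicial spectrum $\tau_{\geq 2w}(X \wedge MU^{\bullet+1})$ whose totalization is $\Gamma_w X$ by definition. Recall that for any cosimplicial spectrum $C^\bullet$, the tower of partial totalizations $\Tot_n C^\bullet \lto \Tot_{n-1} C^\bullet$, whose $n$th fiber is $\Omega^n N^n C^\bullet$, gives rise to a spectral sequence with $E_2^{s,f}$-term the cohomology $\pi^f \pi_{s+f} C^\bullet$ of the associated cosimplicial abelian group, converging conditionally to $\pi_s \Tot C^\bullet$. Applied to the untruncated $X \wedge MU^{\bullet+1}$, this is the classical Adams--Novikov spectral sequence, and the standard calculation identifies $\pi^f \pi_{s+f}(X \wedge MU^{\bullet+1})$ with $\Ext^{s,f}_{MU_*MU}(MU_*, MU_* X)$ in the grading conventions adopted here.

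Next I would identify the $E_2$-page for the truncated cosimplicial object. The levelwise truncation $\tau_{\geq 2w}$ induces an isomorphism on $\pi_n$ for $n \geq 2w$ at each cosimplicial level and kills $\pi_n$ for $n < 2w$. Since the cohomology $\pi^f$ of a cosimplicial abelian group depends only on its underlying groups, the $E_2$-page for $\tau_{\geq 2w}(X \wedge MU^{\bullet+1})$ agrees with the Adams--Novikov $E_2$-page at bidegrees $(s,f)$ with $s+f \geq 2w$ and vanishes otherwise, giving the claimed shape. Compatibility as $w$ varies is then automatic: the natural transformation $\tau_{\geq 2w} \Rightarrow \tau_{\geq 2(w-1)}$ induces both the structure map $\Gamma_w X \lto \Gamma_{w-1} X$ on totalizations and the evident inclusion of truncated Ext groups on $E_2$-pages.

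The remaining issue is strong convergence of the spectral sequence to $\pi_s \Gamma_w X$, and this is where I expect the main technical obstacle to lie. Since each term of $\tau_{\geq 2w}(X \wedge MU^{\bullet+1})$ is uniformly $(2w-1)$-connective, the fibers $\Omega^n N^n \tau_{\geq 2w}(X \wedge MU^{\bullet+1})$ become increasingly connected as $n$ grows, which should suppress any $\varprojlim^1$ obstruction in the tower $\{\Tot_n\}$ and upgrade conditional to strong convergence. The ambient $2$-completeness hypothesis plays the same role that it does for the usual Adams--Novikov spectral sequence converging to $\pi_\ast X$. Verifying that these connectivity estimates on the truncated terms really do suffice, rather than merely yielding a conditionally convergent spectral sequence, is the one step requiring genuine care; the identification of the $E_2$-page itself is purely formal.
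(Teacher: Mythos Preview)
Your proposal is correct and matches the paper's argument: both construct the Bousfield--Kan spectral sequence of $\tau_{\geq 2w}(X\wedge MU^{\bullet+1})$, identify its $E_1$-page (hence $E_2$-page) with the truncated Adams--Novikov spectral sequence by observing that $\tau_{\geq 2w}$ is the identity on $\pi_n$ for $n\geq 2w$ and zero below, and obtain compatibility in $w$ from the natural maps of truncations. You are actually more careful than the paper about strong convergence---the paper simply asserts it ``by definition of $\Gamma_\star X$''---and your connectivity argument (the cosimplicial object is levelwise $(2w-1)$-connective, so the fibers of the $\Tot$ tower become increasingly connected) is exactly the right justification.
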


\begin{proof}
The spectral sequence is the Bousfield-Kan spectral sequence associated to the cosimplicial spectrum 
$\tau_{\geq 2w} (X \wedge MU^{\bullet +1})$.
The spectral sequence converges to
$\pi_{\ast, \star} \Gamma_\star X$ by definition of $\Gamma_\star X$.
Our goal is to identify the $E_2$-page of this
Bousfield-Kan spectral sequence.

The $E_1$-page of the Bousfield-Kan spectral sequence is
\[
E_1^{s,f,w} = \pi_{s+f}\left(\tau_{\geq 2w} MU^{\wedge f+1} \right), 
\]
and the $d_1$ differential is the map
\[
\pi_{s+f}  \left( \tau_{\geq 2w} (X \wedge MU^{\wedge f+1} ) \right)
\lto \pi_{s+f} \left( \tau_{\geq 2w} ( X \wedge MU^{\wedge f+2} )\right)
\]
induced by the alternating sum
of cofaces. 
When $s+f < 2w$, the $E_1$-page is zero.
When $s+f \geq 2w$, the $E_1$-page is isomorphic to
\[
E_1^{s,f,w}(X) = \pi_{s+f} \left( X \wedge MU^{\wedge f+1} \right),
\]
and the $d_1$ differential is the map
\[
\pi_{s+f} \left( X \wedge MU^{\wedge f+1} \right)
\lto \pi_{s+f} \left( X \wedge MU^{\wedge f+2} \right).
\]
This is identical to the $E_1$-page and $d_1$ differential
for the classical Adams-Novikov spectral sequence for $X$.
Therefore, $E_2^{s,f,w}(X)$ is equal to the 
Adams-Novikov $E_2$-page for $X$ when $s+f \geq 2w$.
\end{proof}

\begin{remark}
For fixed $w$, the spectral sequence $E_2^{s,f,w}(X)$
is a truncated version of the classical Adams-Novikov spectral
sequence for $X$, as shown in Figure \ref{fig:ANSS-truncated}.  
Above the line $s+f = 2w$ of slope $-1$,
$E_2^{s,f,w}(X)$ equals the classical Adams-Novikov spectral
sequence.  Below this line, $E_2^{s,f,w}(X)$ is zero.
If $X$ is $n$-connected and $2w \leq n$, then
then the truncation is trivial,
and $E_2^{s,f,w}(X)$ is equal to the classical Adams-Novikov
spectral sequence.

Classical Adams-Novikov differentials whose source is above
the line of slope $-1$ occur as well in
$E_2^{s,f,w}(X)$.
However, classical Adams-Novikov differentials whose source
is below the line of slope $-1$ do not occur in $E_r^{s,f,w}(X)$.
Of particular interest are classical differentials that
cross the line $s+f= 2w$.  Consequently, there are non-zero
permanent cycles in $E_\infty^{s,f,w}(X)$ that are zero in the
classical Adams-Novikov $E_\infty$-page.
\end{remark}

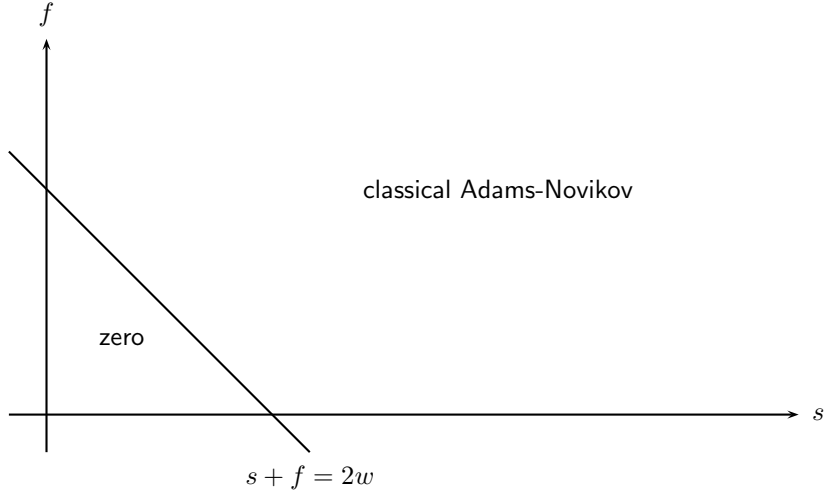
\begin{figure}
\caption{
The spectral sequence of Proposition \ref{pro:anssgasts}.
The region below the diagonal line is zero.
The region above the diagonal line is isomorphic to the
classical Adams-Novikov spectral sequence in that range.
\label{fig:ANSS-truncated}
}
\begin{pspicture}(-1,-1)(10,6)
\psline{->}(0,-0.5)(0,5)
\psline{->}(-0.5,0)(10,0)
\psline(-0.5,3.5)(3.5,-0.5)
\uput[0](10,0){$s$}
\uput[90](0,5){$f$}
\uput[-90](3.5,-0.5){$s + f = 2w$}
\rput(1,1){\textsf{zero}}
\rput(6,3){\textsf{classical Adams-Novikov}}
\end{pspicture}
\end{figure}

\begin{remark}
Not coincidentally, when $X = S^0$, the spectral sequence of 
Proposition \ref{pro:anssgasts} is identical to the motivic
Adams-Novikov spectral sequence for computing the
stable homotopy groups of the motivic sphere spectrum 
\cite{HKO11} \cite{StableStems}.
Consequently, the bigraded homotopy groups of the filtered
spectrum $\Gamma_\star S^0$ are the same as the bigraded
motivic homotopy groups of the motivic sphere spectrum.
\end{remark}

Later when we study specific examples of $\Gast X$, we will use
specific computational information about $X$ to deduce
analogous computational information about $\Gast X$.
Proposition \ref{prop:colim-Gamma} is the precise tool
for transporting such information.

\begin{prop}
\label{prop:colim-Gamma}
Let $X$ be a bounded-below spectrum. Then
$\colim_\star \Gast X$ is equivalent to $X$.
\end{prop}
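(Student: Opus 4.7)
The plan is to show that the $\Zop$-diagram $\Gast X$ is eventually constant with value $\Tot(X \wedge MU^{\bullet+1})$, and then to invoke convergence of the $MU$-based Adams spectral sequence to identify this value with $X$.

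First, I would fix an integer $k$ for which $X$ is $k$-connective; this uses the bounded-below hypothesis. Since $MU$ is connective, the smash product $X \wedge MU^{\wedge(n+1)}$ is then $k$-connective for every $n \geq 0$. Consequently, for any $w$ with $2w \leq k$, the canonical map $\tau_{\geq 2w}(X \wedge MU^{\wedge(n+1)}) \lto X \wedge MU^{\wedge(n+1)}$ is an equivalence, and this holds uniformly in $n$. Naturality in $n$ promotes it to a levelwise equivalence of cosimplicial spectra $\tau_{\geq 2w}(X \wedge MU^{\bullet+1}) \simeq X \wedge MU^{\bullet+1}$. Applying $\Tot$ yields $\Gamma_w X \simeq \Tot(X \wedge MU^{\bullet+1})$ for all such $w$, and by the same argument every structure map $\Gamma_w X \lto \Gamma_{w-1} X$ is an equivalence once $2w \leq k$.

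Next I would compute the colimit. The $\Zop$-diagram $\Gast X$ has all its structure maps becoming equivalences for $w$ sufficiently small, so its colimit is the eventual value $\Tot(X \wedge MU^{\bullet+1})$. In particular, no delicate interchange of limits and colimits is required, because the filtered colimit stabilizes after finitely many indices rather than being genuinely filtered. Finally, the paragraph preceding Definition 3.2 records that convergence of the $MU$-based Adams spectral sequence means precisely that $\Tot(X \wedge MU^{\bullet+1})$ is equivalent to the ($2$-completed) spectrum $X$. Under the standing $2$-completion convention, and using boundedness below of $X$ to guarantee strong convergence, we conclude $\colim_\star \Gast X \simeq X$.

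The main obstacle is the uniformity in the cosimplicial degree: the truncation $\tau_{\geq 2w}$ must trivialize simultaneously on every term $X \wedge MU^{\wedge(n+1)}$. This is exactly what the connectivity of $MU$ buys, combined with $X$ being bounded below; without either assumption the individual terms need not share a uniform connectivity bound, and the diagram would fail to stabilize. Everything else in the argument is essentially formal.
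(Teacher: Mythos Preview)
Your argument is correct and in fact more direct than the paper's. The paper establishes the same conclusion by commuting the filtered colimit past $\Tot$: it writes $\Tot$ as $\holim_n \Tot^n$, uses that filtered colimits commute with the finite limits $\Tot^n$, and then appeals to increasing connectivity of the maps $\Tot^n \to \Tot^{n-1}$ to pass to the limit. You instead observe that, because the connectivity bound $k$ on $X \wedge MU^{\wedge(n+1)}$ is uniform in $n$, the entire $\Zop$-diagram $w \mapsto \Gamma_w X$ is eventually constant, so the colimit is simply its stable value $\Tot(X \wedge MU^{\bullet+1})$ and no interchange is required. Your route is more elementary and makes clear exactly where the bounded-below hypothesis is used; the paper's interchange argument is a bit heavier but has the virtue of being a template that would still apply in situations where the levelwise truncations do not literally stabilize but only become highly connected.
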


\begin{proof}
By definition, $\colim_\star \Gast X$ equals
\[
\colim_w \Tot \left (\tau_{\geq 2 w} 
\left( X \wedge MU^{\bullet + 1} \right) \right).
\]
Since $X$ is bounded below, we can write the $\Tot$ as an inverse limit of $\Tot^n$, where the maps $\Tot^n \to \Tot^{n-1}$ increase in connectivity. Since filtered colimits commute with the finite limits $\Tot^n$, and preserve connectivity, one sees that the filtered colimit commutes with totalization in the present situation. So we have
\[
\Tot \left ( \colim_w \tau_{\geq 2 w} 
\left( X \wedge MU^{\bullet + 1} \right) \right) =
\Tot
\left( X \wedge MU^{\bullet + 1} \right).
\]
The latter object is the same as $X$, precisely because
the Adams-Novikov spectral sequence converges for connective $X$.
\end{proof}

\subsection{The ring object $\Gamma_\star S^0$}

Our next goal is to show that $\Gamma_\star S^0$ is
a ring object in the category $\SptZ$.
This will allow us to consider $\Gamma_\star S^0$-modules
in $\SptZ$.

We will need to work in the category of cosimplicial filtered
spectra, or equivalently in filtered cosimplicial spectra.
Such objects are $(\Zop \times \Delta)$-shaped
diagrams of spectra.

In accordance with our general grading conventions, 
$\Xstdo$ denotes a cosimplicial filtered spectrum, 
where $\star$ refers to the filtered degree 
while $\bullet$ refers to the cosimplicial degree. 
Thus every $X_{\star}^s$ is a filtered spectrum, and
every $X^\bullet_w$ is a cosimplicial spectrum. 

The category of cosimplicial filtered spectra is symmetric
monoidal with respect 
to Day convolution applied pointwise in the cosimplicial
direction.
More concretely, if 
$\Xstdo$ and $\Ystdo$ are cosimplicial filtered spectra, then
$\Xstdo \otimes \Ystdo$ is the cosimplicial
filtered spectrum such that
\[
\left( \Xstdo \otimes \Ystdo \right)^s_\star =
X^s_{\star} \otimes Y^s_{\star},
\]
where the latter product is the Day convolution product 
of filtered spectra.
We refer to this product
in the proof of Proposition \ref{prop:Gamma-lax-monoidal}
when we study lax symmetric monoidal functors taking values
in cosimplicial filtered spectra.

\begin{prop}
\label{prop:Gamma-lax-monoidal}
The functor $\Gamma_\star$ is lax symmetric monoidal.
\end{prop}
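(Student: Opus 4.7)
The plan is to factor $\Gamma_\star$ as a composition of three lax symmetric monoidal functors
\[
\Gamma_\star \colon \Spt \xrightarrow{F_1} \Spt^{\Delta} \xrightarrow{F_2} (\SptZ)^{\Delta} \xrightarrow{F_3} \SptZ,
\]
where $F_1(X) = X \wedge MU^{\bullet+1}$, the functor $F_2$ sends a cosimplicial spectrum $Z^\bullet$ to the cosimplicial filtered spectrum $(n,w) \mapsto \tau_{\geq 2w}(Z^n)$, and $F_3$ is cosimplicial totalization. The category $\Spt^{\Delta}$ is equipped with pointwise smash, while $(\SptZ)^{\Delta}$ carries the product that is pointwise in the cosimplicial direction and Day convolution in the filtered direction. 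A direct unwinding of the definitions shows $F_3 \circ F_2 \circ F_1 = \Gamma_\star$, so it suffices to prove that each $F_i$ is lax symmetric monoidal, after which composition closes the argument.

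The functor $F_1$ is lax symmetric monoidal because $MU^{\bullet+1}$ is an $E_\infty$-algebra in $\Spt^{\Delta}$ with respect to the pointwise product: at cosimplicial level $n$ it is $MU^{\wedge n+1}$, with multiplication and unit naturally induced by those of $MU$ in a way that is compatible with cofaces and codegeneracies. Smashing with a fixed commutative algebra is lax symmetric monoidal, the structure map
\[
(X \wedge MU^{\bullet+1}) \wedge (Y \wedge MU^{\bullet+1}) \lto (X \wedge Y) \wedge MU^{\bullet+1}
\]
being obtained by braiding and then using the multiplication of $MU^{\bullet+1}$. The functor $F_3 = \Tot$ is lax symmetric monoidal as a right adjoint to the constant cosimplicial diagram functor, which is strong symmetric monoidal for the pointwise product on $(\SptZ)^{\Delta}$.

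The heart of the argument is $F_2$. I would construct its lax structure map directly: at cosimplicial degree $n$ and filtered degree $w$, the required natural map is
\[
\hocolim_{i + j \geq w} \tau_{\geq 2i}(X^n) \wedge \tau_{\geq 2j}(Y^n) \lto \tau_{\geq 2w}(X^n \wedge Y^n).
\]
For each $(i,j)$ in the indexing poset, the smash $\tau_{\geq 2i}(X^n) \wedge \tau_{\geq 2j}(Y^n)$ is $2(i+j)$-connective with $2(i+j) \geq 2w$, so the evident map to $X^n \wedge Y^n$ factors canonically through $\tau_{\geq 2w}(X^n \wedge Y^n)$. This is precisely the content of Lemma \ref{lem:t-structure-smash} and its packaging into Corollary \ref{cor:t-structure-smash}. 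Assembling these lifts over the homotopy colimit and checking the coherence conditions (which follow formally from the contractibility of the spaces of such factorizations through a connective cover) produces the lax structure, and applying it levelwise in the cosimplicial direction yields the lax structure on $F_2$.

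The main obstacle is simply bookkeeping the three interacting monoidal structures and verifying the coherences, rather than any deep computation. The decisive technical input is Lemma \ref{lem:t-structure-smash}, which captures the essential compatibility of the connective $t$-structure on $\SptZ$ with Day convolution, and without which the construction of $F_2$'s lax structure would not make sense.
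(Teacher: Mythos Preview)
Your proposal is correct and follows essentially the same approach as the paper: both factor $\Gamma_\star$ as the composite of smashing with the commutative algebra $MU^{\bullet+1}$, applying the lax monoidal truncation $(\tau_{\geq 0})_\star$ levelwise (via Lemma~\ref{lem:t-structure-smash} and Corollary~\ref{cor:t-structure-smash}), and then totalizing. Your write-up is slightly more explicit in unpacking the lax structure map for $F_2$, but the ingredients and overall architecture are identical.
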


\begin{proof}
We will show that $\Gamma_\star$ is a composition
of three lax symmetric monoidal functors.

First, consider the functor
$\Spt \lto \Spt^\Delta$ that takes
a spectrum $X$ to the cosimplicial spectrum 
$X \wedge MU^{\bullet + 1}$.
This functor is lax symmetric monoidal, i.e., there are natural maps
\[
(X \wedge MU^{\bullet + 1} ) \wedge
(Y \wedge MU^{\bullet + 1} ) \lto
(X \wedge Y) \wedge MU^{\bullet + 1},
\]
because $MU^{\bullet + 1}$ is a commutative ring object 
in the category of cosimplicial spectra.

Second, recall the truncation functor
$(\tau_{\geq 0})_\star$ on filtered spectra
defined at the end of 
Section \ref{subsctn:t-structure}.
Let $(\tau_{\geq 0})^\bullet_\star$ be the functor
from cosimplicial filtered spectra to cosimplicial filtered
spectra
that applies $(\tau_{\geq 0})_\star$ pointwise in the 
cosimplicial direction.  
This functor is also lax symmetric monoidal because
$(\tau_{\geq 0})_\star$ is lax symmetric monoidal by 
Corollary \ref{cor:t-structure-smash}.

Finally, let $\Tot_\star$
be the functor from cosimplicial filtered spectra
to filtered spectra that applies $\Tot$ pointwise
in the filtered direction.
This functor is lax symmetric monoidal because it is a homotopy limit
in the category of filtered spectra.
\end{proof}

\begin{remark}
The functor $\Gamma_\star$ is not strong monoidal.
We will see in Examples \ref{ex:Gamma-S^2} and \ref{ex:Gamma-S^1} that
$\Gamma_\star S^2$ equals $\Sigma^{2,1} \Gamma_\star S^0$,
while
$\Gamma_\star S^1$ equals $\Sigma^{1,0} \Gamma_\star S^{0}$.
Therefore, the map
\[
\Gamma_\star S^1 \gammasmash \Gamma_\star S^1 \lto
\Gamma_\star(S^1 \wedge S^1)
\]
is the map $\tau: \Sigma^{2,0} \Gamma_\star S^0 \lto
\Sigma^{2,1} \Gamma_\star S^0$,
which is not an equivalence of filtered spectra.
\end{remark}

\begin{thm} \label{thm:sering}
The filtered spectrum $\Gamma_\star S^0$ is an $E_\infty$-ring
object in the category of filtered spectra.
\end{thm}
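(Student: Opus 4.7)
The plan is to deduce this essentially formally from Proposition \ref{prop:Gamma-lax-monoidal}. The standard fact in $\infty$-categorical algebra (see \cite[Section 2.2.1]{HA}) is that a lax symmetric monoidal functor between symmetric monoidal $\infty$-categories sends commutative algebra objects to commutative algebra objects, i.e., induces a functor on the categories of $E_\infty$-rings. Since $S^0$ is the unit of the symmetric monoidal structure on $\Spt$, it canonically carries the structure of an $E_\infty$-ring (in fact, the initial one). Applying the lax symmetric monoidal functor $\Gamma_\star$ then produces the desired $E_\infty$-ring structure on $\Gamma_\star S^0$ in $\SptZ$.

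Concretely, I would organize the proof in two sentences. First, invoke Proposition \ref{prop:Gamma-lax-monoidal} to say that $\Gamma_\star\colon \Spt \lto \SptZ$ is lax symmetric monoidal. Second, note that any lax symmetric monoidal functor sends the unit (which is automatically an $E_\infty$-ring) to an $E_\infty$-ring, so $\Gamma_\star S^0$ acquires an $E_\infty$-ring structure.

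I do not anticipate a genuine obstacle here, since all the technical work is already packaged into Proposition \ref{prop:Gamma-lax-monoidal} and into the formal machinery of symmetric monoidal $\infty$-categories. The only subtle point worth mentioning, though not elaborating on, is that one should treat ``lax symmetric monoidal'' in the precise $\infty$-categorical sense (as a map of $\infty$-operads over the commutative operad), which is already the interpretation implicit in Proposition \ref{prop:Gamma-lax-monoidal}. Given that, the preservation of $E_\infty$-algebras is immediate.
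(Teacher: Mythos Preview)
Your proposal is correct and matches the paper's own proof essentially verbatim: the paper simply invokes Proposition \ref{prop:Gamma-lax-monoidal} and the fact that lax symmetric monoidal functors preserve $E_\infty$-ring objects.
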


\begin{proof}
This follows immediately from Proposition \ref{prop:Gamma-lax-monoidal}
because 
lax symmetric mon\-oidal functors preserve $E_\infty$-ring objects.
\end{proof}

We can now define the category in which we are primarily interested.

\begin{defn}
\label{defn:Gamma-mod}
Let $\Mod_{\Gamma_\star S^0}$
be the category of left $\Gamma_\star S^0$-modules 
in the category of filtered spectra.
\end{defn}

We showed in Proposition \ref{prop:Gamma-lax-monoidal} 
that the functor $\Gamma_\star$ is lax symmetric monoidal.
Therefore, $\Gamma_\star X$ is a $\Gamma_\star S^0$-module
for every spectrum $X$.

Equivalences in $\Mod_{\Gamma_\star S^0}$ are defined to be
equivalences on the underlying filtered spectra.
For any two $\Gast S^0$-modules $X$ and $Y$,
let 
$[ X , Y ]_{\Gast S^0}$ be the set of 
homotopy classes of $\Gast S^0$-module maps from $X$ to $Y$.

\begin{prop}
\label{prop:Gamma-generate}
In the $\infty$-category $\Mod_{\Gamma_\star S^0}$,
a map is a weak equivalence if and only if 
it induces an isomorphism on 
$[ \Sigma^{p,q} \Gast S^0, - ]_{\Gast S^0}$ for all 
$p$ and $q$.
Equivalently, $\Mod_{\Gamma_\star S^0}$ is generated under homotopy colimits by the objects $\Sigma^{p,q} \Gast S^0$ for all $p$ and $q$.
\end{prop}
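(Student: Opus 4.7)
The plan is to reduce this to Lemma \ref{lemma:spheresgenerate} by using the free/forgetful adjunction between $\SptZ$ and $\Mod_{\Gast S^0}$. Since $\Gast S^0$ is an $E_\infty$-ring in filtered spectra by Theorem \ref{thm:sering}, we have a standard adjunction in which the left adjoint $\Gast S^0 \otimes (-)$ sends a filtered spectrum $Z_\star$ to the free $\Gast S^0$-module on $Z_\star$, and the right adjoint is the forgetful functor $U: \Mod_{\Gast S^0} \lto \SptZ$. By definition, $U$ is conservative: a morphism $f: M \lto N$ of $\Gast S^0$-modules is a weak equivalence if and only if $U(f)$ is a weak equivalence of filtered spectra.

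Next I would identify the Hom groups from the generating spheres. By the adjunction, for any $\Gast S^0$-module $M$ there is a natural isomorphism
\[
[\Sigma^{p,q} \Gast S^0, M]_{\Gast S^0} \;\cong\; [S^{p,q}, U(M)]_{\SptZ} \;=\; \pi_{p,q} U(M).
\]
Combining this with Remark \ref{remark:pi_s,w}, the group $[\Sigma^{p,q} \Gast S^0, M]_{\Gast S^0}$ is exactly the pointwise stable homotopy group $\pi_p (U(M))_q$. Thus $f: M \lto N$ induces isomorphisms on $[\Sigma^{p,q} \Gast S^0, -]_{\Gast S^0}$ for all $(p,q)$ precisely when $U(f)$ induces isomorphisms on $\pi_{\ast,\star}$. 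Lemma \ref{lemma:spheresgenerate} then says this is equivalent to $U(f)$ being a weak equivalence in $\SptZ$, which by conservativity of $U$ is equivalent to $f$ being a weak equivalence in $\Mod_{\Gast S^0}$. This proves the first claim.

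For the equivalence with the generation statement, I would invoke the same standard argument used in Lemma \ref{lemma:spheresgenerate} (as in \cite[Theorem 1.2.1]{HPS}). The $\infty$-category $\Mod_{\Gast S^0}$ is presentable and stable, the objects $\Sigma^{p,q}\Gast S^0$ are compact, and the first part of the proposition implies that an object $M$ is zero if and only if $[\Sigma^{p,q}\Gast S^0, M]_{\Gast S^0} = 0$ for all $(p,q)$. Standard presentable-category arguments then give that the smallest stable subcategory of $\Mod_{\Gast S^0}$ closed under homotopy colimits and containing the $\Sigma^{p,q}\Gast S^0$ is all of $\Mod_{\Gast S^0}$.

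I do not expect a serious obstacle here: the only thing to be careful about is that the free/forgetful adjunction and the identification of mapping spaces out of free modules work correctly at the $\infty$-categorical level, but these are formal consequences of the general theory of modules over $E_\infty$-rings in a presentable symmetric monoidal stable $\infty$-category (e.g.\ \cite{HA}). The substantive input — that the bigraded spheres detect equivalences — has already been done in Lemma \ref{lemma:spheresgenerate}.
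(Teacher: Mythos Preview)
Your proof is correct and follows essentially the same approach as the paper: both use the free/forgetful adjunction to identify $[\Sigma^{p,q}\Gast S^0, M]_{\Gast S^0}$ with $\pi_{p,q}$ of the underlying filtered spectrum and then invoke Lemma~\ref{lemma:spheresgenerate}. The paper's proof is simply a one-line version of what you wrote out in detail.
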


\begin{proof}
This follows from Lemma \ref{lemma:spheresgenerate}, together 
with the adjunction
\[
[ \Sigma^{p,q} \Gast S^0, X ]_{\Gast S^0} \cong
\pi_{p, q} X.
\]
\end{proof}

\subsection{Exactness properties of $\Gamma_\star$}

In general, the functor $\Gamma_\star$ is not exact, in the sense
that it does not preserve all cofiber sequences.
However, we shall show that $\Gamma_\star$ preserves 
certain types of cofiber sequences.
These results are essential for
computations later in Sections \ref{sctn:Steenrod-algebra} 
and \ref{sctn:mmf}.

\begin{lemma}
\label{lem:Gamma-even-suspend}
For any spectrum $X$,
the filtered spectrum
$\Gamma_\star(\Sigma^{2k} X)$ is equivalent to
$\Sigma^{2k,k} \Gamma_\star X$.
\end{lemma}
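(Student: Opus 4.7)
The plan is to unwind both sides levelwise and observe that the two constructions match after a simple reindexing. Write $\Gamma_w(\Sigma^{2k}X)$ as $\Tot \tau_{\geq 2w}(\Sigma^{2k} X \wedge MU^{\bullet+1})$ and note that smashing with $\Sigma^{2k}$ is the same as applying $\Sigma^{2k}$ to the whole smash product, so
\[
\Sigma^{2k} X \wedge MU^{\bullet+1} \simeq \Sigma^{2k}(X \wedge MU^{\bullet+1}).
\]
Then use the standard fact that connective covers on spectra commute with even (in fact, any) suspension up to a shift in the cutoff: $\tau_{\geq n}\Sigma^{2k} \simeq \Sigma^{2k} \tau_{\geq n - 2k}$. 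Applied termwise to the cosimplicial spectrum, this gives
\[
\tau_{\geq 2w}\bigl(\Sigma^{2k} X \wedge MU^{\bullet+1}\bigr) \simeq \Sigma^{2k} \tau_{\geq 2(w-k)}\bigl(X \wedge MU^{\bullet+1}\bigr).
\]

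Next, observe that $\Sigma^{2k}$, being an equivalence of the stable $\infty$-category $\Spt$, commutes with $\Tot$ (a homotopy limit). Pulling $\Sigma^{2k}$ out of the totalization gives
\[
\Gamma_w(\Sigma^{2k}X) \simeq \Sigma^{2k} \Tot \tau_{\geq 2(w-k)}\bigl(X \wedge MU^{\bullet+1}\bigr) = \Sigma^{2k}\Gamma_{w-k} X,
\]
which is precisely the $w$th level of $\Sigma^{2k,k}\Gamma_\star X$ according to the explicit description of $\Sigma^{s,w}$ given right after Definition \ref{defn:SptZ-spheres}.

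What remains is to promote this levelwise identification to an equivalence of filtered spectra, i.e., to check compatibility with the structure maps $\Gamma_w \to \Gamma_{w-1}$ as $w$ varies. This is the one mildly delicate point: the argument above should be carried out not levelwise in $w$ but once and for all at the level of the cosimplicial filtered spectrum $(\tau_{\geq 2\star}(X \wedge MU^{\bullet+1}))$, so that naturality of $\tau_{\geq n}\Sigma^{2k} \simeq \Sigma^{2k}\tau_{\geq n-2k}$ in $n$ gives the needed compatibility with the connecting maps. Concretely, I would package $\tau_{\geq 2\star}$ as a functor from spectra to cosimplicial filtered spectra (using the functor $(\tau_{\geq 0})_\star$ from Section \ref{subsctn:t-structure} applied cosimplicially to the Adams resolution), check that it intertwines $\Sigma^{2k}$ on the source with $\Sigma^{2k,k}$ on the target, and then apply the lax symmetric monoidal $\Tot_\star$ from the proof of Proposition \ref{prop:Gamma-lax-monoidal}.

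The only real obstacle is bookkeeping: making sure the shift in the filtration ($w \mapsto w-k$) that comes from $\tau_{\geq 2w}\Sigma^{2k} \simeq \Sigma^{2k}\tau_{\geq 2(w-k)}$ matches the shift built into the definition of $\Sigma^{2k,k}$ on filtered spectra, and that this matching is natural in the structure maps of $\Z^{op}$. Once that is done, no further input is required; the restriction to even suspensions $2k$ is exactly what ensures that the cutoff shifts by the integer $k$ rather than by a half-integer, which is why the analogous statement for odd suspensions will need to be treated separately.
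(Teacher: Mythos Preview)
Your proposal is correct and follows essentially the same approach as the paper: both unwind $\Gamma_w(\Sigma^{2k}X)$, use the identity $\tau_{\geq 2w}\Sigma^{2k}\simeq \Sigma^{2k}\tau_{\geq 2(w-k)}$, and then pull $\Sigma^{2k}$ through $\Tot$. If anything, you are more careful than the paper, which simply concludes with the levelwise identification $\Gamma_w(\Sigma^{2k}X)\simeq \Sigma^{2k}\Gamma_{w-k}X$ without spelling out the naturality in $w$ that you discuss.
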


\begin{proof}
We have that
$\Gamma_w (\Sigma^{2k} X)$ is equal to
\[
\Tot \left( \tau_{\geq 2w}  
\left( S^{2k} \wedge X \wedge MU^{\bullet + 1} \right) \right),
\]
which is equivalent to
\[
\Tot \left( S^{2k} \wedge \tau_{\geq 2(w-k)} 
\left(X \wedge MU^{\bullet + 1} \right) \right).
\]
The functor $\Tot$ commutes (up to homotopy) with suspension
since homotopy limits commute with desuspension.
We conclude that $\Gamma_w (\Sigma^{2k} X)$ is equal
to $\Sigma^{2k} \Gamma_{w-k} X$.
\end{proof}

\begin{example}
\label{ex:Gamma-S^2}
When $X$ is $S^0$,
Lemma \ref{lem:Gamma-even-suspend}
implies that
$\Gamma_\star S^{2k}$ equals
$\Sigma^{2k,k} \Gamma_\star S^0$.
\end{example}

\begin{lemma}
\label{lem:Gamma-odd-suspend}
Let $X$ be a spectrum such that
$MU_* X$ is concentrated in even degrees.
Then
$\Gamma_\star(\Sigma^{2k+1} X)$ equals
$\Sigma^{2k+1,k} \Gamma_\star X$.
\end{lemma}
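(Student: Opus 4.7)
The plan is to imitate the proof of Lemma \ref{lem:Gamma-even-suspend}, but handle the off-by-one parity discrepancy that appears when the suspension degree is odd. Unwinding the definition, we have
\[
\Gamma_w(\Sigma^{2k+1} X) \simeq \Tot\bigl(\tau_{\geq 2w}(S^{2k+1} \wedge X \wedge MU^{\bullet+1})\bigr).
\]
On the other hand, the $w$-th level of $\Sigma^{2k+1,k} \Gast X$ is
\[
\Sigma^{2k+1} \Gamma_{w-k} X \simeq \Tot\bigl(S^{2k+1} \wedge \tau_{\geq 2(w-k)}(X \wedge MU^{\bullet+1})\bigr) \simeq \Tot\bigl(\tau_{\geq 2w+1}(S^{2k+1}\wedge X\wedge MU^{\bullet+1})\bigr),
\]
using that suspension by $2k+1$ shifts connective covers by $2k+1$ and that $\Tot$ commutes with suspension. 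So the task reduces to showing that the natural map
\[
\tau_{\geq 2w+1}(S^{2k+1} \wedge X \wedge MU^{\wedge n+1}) \lto \tau_{\geq 2w}(S^{2k+1} \wedge X \wedge MU^{\wedge n+1})
\]
is an equivalence of cosimplicial spectra, which amounts to the vanishing of $\pi_{2w}$ of each term.

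The key input is the hypothesis that $MU_* X$ is concentrated in even degrees. I want to upgrade this to the statement that $\pi_*(X \wedge MU^{\wedge n+1})$ is concentrated in even degrees for every $n \geq 0$. This is a standard flatness argument: since $MU_* MU$ is a free $MU_*$-module concentrated in even degrees, the Künneth isomorphism gives
\[
\pi_*(X \wedge MU^{\wedge n+1}) \cong MU_* X \otimes_{MU_*} (MU_* MU)^{\otimes_{MU_*} n},
\]
and a tensor product of even-concentrated graded $MU_*$-modules with a flat even-concentrated module is again even-concentrated. Hence $\pi_{2w-2k-1}(X \wedge MU^{\wedge n+1}) = 0$, i.e.\ $\pi_{2w}$ of each smash $S^{2k+1}\wedge X \wedge MU^{\wedge n+1}$ vanishes.

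Once this vanishing is in hand, the comparison map $\tau_{\geq 2w+1} \to \tau_{\geq 2w}$ on each cosimplicial level is an equivalence (its cofiber is a shift of the Eilenberg--Mac Lane spectrum on the vanishing group), and these equivalences are natural, hence assemble into an equivalence of cosimplicial spectra. Taking $\Tot$ completes the identification of $\Gamma_w(\Sigma^{2k+1}X)$ with the $w$-th level of $\Sigma^{2k+1,k}\Gast X$, and the identification is compatible with the structure maps indexed by $w$ since everything is natural in $w$.

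The main obstacle is the flatness/evenness step: asserting that $MU_*X$ being even forces $\pi_*(X \wedge MU^{\wedge n+1})$ to be even for all $n$. The rest is bookkeeping of connective covers across suspensions and cosimplicial totalization, both of which behave in the straightforward way dictated by the $t$-structure and the fact that $\Tot$ is a homotopy limit.
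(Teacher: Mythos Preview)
Your proof is correct and follows essentially the same approach as the paper's: both hinge on the fact that $\pi_*(X\wedge MU^{\wedge n+1})$ is concentrated in even degrees, which makes $\tau_{\geq 2w}(S^{2k+1}\wedge X\wedge MU^{\bullet+1})$ coincide with $S^{2k+1}\wedge\tau_{\geq 2(w-k)}(X\wedge MU^{\bullet+1})$. You spell out the evenness via the K\"unneth/flatness argument, whereas the paper leaves that implicit; otherwise the arguments are the same.
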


\begin{proof}
We have that
$\Gamma_w (\Sigma^{2k+1} X)$ is equal to
$\Tot \left( \tau_{\geq 2w}  
\left( S^{2k+1} \wedge X \wedge MU^{\bullet + 1} \right) \right)$,
which is equivalent to
$\Tot \left( S^{2k+1} \wedge \tau_{\geq 2(w-k)} 
\left(X \wedge MU^{\bullet + 1} \right) \right)$
because
each $X \wedge MU^{\bullet + 1}$
has homotopy groups concentrated in even degrees.
Similarly to the proof of Lemma \ref{lem:Gamma-even-suspend},
we conclude that
$\Gamma_w (\Sigma^{2k+1} X)$ is equal
to $\Sigma^{2k+1} \Gamma_{w-k} X$.
\end{proof}

\begin{example}
\label{ex:Gamma-S^1}
When $X$ is $S^0$,
Lemma \ref{lem:Gamma-odd-suspend}
implies that the filtered spectrum
$\Gamma_\star S^{2k+1}$ equals
$\Sigma^{2k+1,k} \Gamma_\star S^0$.
\end{example}

\begin{remark}
\label{rem:Gamma-eta}
The functor $\Gast$ does not commute with suspensions.
For example, consider the first Hopf map $\eta: S^1 \lto S^0$.
Then $\Gast \eta$ is a map $\Sigma^{1,0} \Gast S^0 \lto \Gast S^0$,
of relative degree $(1,0)$.
On the other hand, consider $\Sigma \eta: S^2 \lto S^1$.
Then $\Gast \left( \Sigma \eta \right)$ is a map 
$\Sigma^{2,1} \Gast S^0 \lto \Sigma^{1,0} \Gast S^0$,
of relative degree $(1,1)$.
\end{remark}

\begin{prop}
\label{prop:Gamma-exact}
Let $X \lto Y \lto Z$ be a cofiber sequence such that
$MU_{2w-1} X \lto MU_{2w-1} Y$ is injective for all $w$.
Then
\[
\Gamma_\star X \lto \Gamma_\star Y \lto \Gamma_\star Z
\]
is a cofiber sequence of filtered spectra.
\end{prop}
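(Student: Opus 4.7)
The plan is to prove this fiber-sequence-wise at each filtered level $w$, and show that the connective cover functor $\tau_{\geq 2w}$ applied to the cosimplicial resolution preserves the cofiber sequence in question.

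First I would smash the cofiber sequence $X \to Y \to Z$ with the cosimplicial spectrum $MU^{\bullet+1}$ to obtain a cofiber sequence $X \wedge MU^{\bullet+1} \to Y \wedge MU^{\bullet+1} \to Z \wedge MU^{\bullet+1}$ of cosimplicial spectra. For each fixed $w$, I would then analyze what happens after applying $\tau_{\geq 2w}$ levelwise. For a single cofiber sequence $A \to B \to C$ of spectra, a standard long-exact-sequence argument shows that $\tau_{\geq n} A \to \tau_{\geq n} B \to \tau_{\geq n} C$ is again a cofiber sequence precisely when $\pi_{n-1} A \to \pi_{n-1} B$ is injective: otherwise the cofiber of the first map only equals $\tau_{\geq n} C$ modulo a contribution from $\ker(\pi_{n-1} A \to \pi_{n-1} B)$ in degree $n$.

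Applied in cosimplicial degree $f$, this reduces the problem to showing that $\pi_{2w-1}(X \wedge MU^{\wedge f+1}) \to \pi_{2w-1}(Y \wedge MU^{\wedge f+1})$ is injective for every $f \geq 0$ and every $w$, which is equivalent to injectivity of $MU_{2w-1}(X \wedge MU^{\wedge f}) \to MU_{2w-1}(Y \wedge MU^{\wedge f})$. For $f = 0$ this is exactly the hypothesis. For $f \geq 1$, I would invoke flatness of $MU_*MU$ over $MU_*$ to obtain the Künneth identification
\[
MU_*(X \wedge MU^{\wedge f}) \cong MU_* X \otimes_{MU_*} (MU_* MU)^{\otimes_{MU_*} f}.
\]
Let $K$ denote the kernel of $MU_* X \to MU_* Y$ as a graded $MU_*$-module. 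By hypothesis, $K$ vanishes in all odd degrees, so $K$ is supported in even degrees. Since $(MU_* MU)^{\otimes f}$ is also concentrated in even degrees and is flat over $MU_*$, the kernel of the tensored map is $K \otimes_{MU_*} (MU_* MU)^{\otimes f}$, which remains concentrated in even degrees and hence vanishes in degree $2w-1$.

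Having established that $\tau_{\geq 2w}(X \wedge MU^{\bullet+1}) \to \tau_{\geq 2w}(Y \wedge MU^{\bullet+1}) \to \tau_{\geq 2w}(Z \wedge MU^{\bullet+1})$ is a cofiber sequence of cosimplicial spectra, I would apply $\Tot$. Since $\Tot$ is a homotopy limit and fiber sequences coincide with cofiber sequences in the stable category, this yields a cofiber sequence $\Gamma_w X \to \Gamma_w Y \to \Gamma_w Z$ for every $w$. Because cofiber sequences in $\SptZ$ are detected pointwise, this assembles into the required cofiber sequence $\Gamma_\star X \to \Gamma_\star Y \to \Gamma_\star Z$. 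The main obstacle is the injectivity step: recognizing that the odd-degree hypothesis is tailored precisely so that the parity of $MU_*$ and $MU_*MU$ propagates it through the Künneth tensor product, and that this exactly matches the obstruction for $\tau_{\geq 2w}$ to preserve the levelwise cofiber.
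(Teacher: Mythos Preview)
Your proof is correct and follows essentially the same approach as the paper's: reduce to showing that $\tau_{\geq 2w}$ preserves the levelwise cofiber sequence, which amounts to the injectivity of $\pi_{2w-1}(X\wedge MU^{n+1})\to\pi_{2w-1}(Y\wedge MU^{n+1})$, and then apply $\Tot$. The only difference is that the paper simply asserts this injectivity for all $n$, whereas you supply the K\"unneth/flatness argument (using that $MU_*MU$ is free over $MU_*$ on even-degree generators) to actually propagate the $n=0$ hypothesis to higher $n$; your version is more complete on this point.
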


\begin{proof}
The given condition implies that 
\[
\pi_{2w-1} \left(X \wedge MU^{n+1}\right) \lto 
\pi_{2w-1} \left(Y \wedge  MU^{n+1}\right)
\]
is injective for all $w$ and all $n$.
Since the composite
\begin{equation}
\tau_{\geq 2w} \left (X \wedge MU^{n + 1} \right) \lto
\tau_{\geq 2w} \left (Y \wedge MU^{n + 1} \right) \lto
\tau_{\geq 2w} \left (Z \wedge MU^{n + 1} \right)
\label{eq:cofibersequence}
\end{equation}
is nullhomotopic, we get a map
\[
\operatorname{cofib}\left(\tau_{\geq 2w} \left (X \wedge MU^{n + 1} \right) 
\lto \tau_{\geq 2w} \left (Y \wedge MU^{n + 1} \right)\right) 
\lto \tau_{\geq 2w} \left (Z \wedge MU^{n + 1} \right).
\]
A diagram chase in homotopy groups shows that this is an equivalence, so \eqref{eq:cofibersequence} is a cofiber sequence
for all $w$ and all $n$.
The functor $\Tot$ preserves cofiber sequences
because fiber sequences are the same as cofiber sequences,
and $\Tot$ is a homotopy limit.
Therefore,
\[
\Gamma_w X \lto \Gamma_w Y \lto \Gamma_w Z
\]
is a cofiber sequence for all $w$.
\end{proof}

Proposition \ref{prop:Gamma-exact}
includes a technical condition about odd $MU$-homology.
We would like to show that this condition holds for a large
class of spectra.  With that goal in mind,
Definition \ref{defn:even-cell}
encapsulates some standard notions in a convenient form.

\begin{defn}
\label{defn:even-cell}
A spectrum $X$ is a \emph{bounded below, finite type, even-cell
complex} if it is a finite complex with cells only in even dimensions, 
or if it is the homotopy colimit of a sequence
\[
* = X^{(0)} \lto X^{(1)} \lto X^{(2)} \lto \cdots,
\]
where there are cofiber sequences
\[
X^{(n-1)} \lto X^{(n)} \lto S^{2k_n}
\]
for some integers $k_n$ that tend to $\infty$ as $n$ approaches
$\infty$.
\end{defn}

\begin{example}
\label{ex:even-cell}
Suppose that $X$ is a bounded below spectrum.
Then $X$ has, up to $p$-completion, a cell structure in which the cells are in 
one-to-one correspondence with a basis for $H_*(X; \F_p)$.
If $H_*(X;\F_p)$ is concentrated in even degrees, and each
$H_n(X;\F_p)$ is finite-dimensional, then $X$ is
a bounded below, finite type, even-cell complex. Similar statements hold (without any completion) for integer coefficient homology, provided $H_*(X;\Z)$ is free over $\Z$.

Specific examples include $BP$, $BP\langle n \rangle$,
and $MU$.
\end{example}

Note that if 
$X$ and $Y$ are bounded below, finite type, even-cell complexes,
then so is $X \wedge Y$.
This follows from the standard fact that
$X \wedge Y$ has a cell structure in which the $n$-cells correspond
to pairs of $i$-cells in $X$ and $j$-cells in $Y$ such that $i + j = n$.

\begin{lemma}
\label{lem:MU_*X-even}
Let $X$ be a bounded below, finite type, even-cell complex. Then $MU\wedge X$ splits as an $MU$-module into a wedge $\bigvee \Sigma^{2k_n} MU$ of even shifts of $MU$. In particular, if
$Y$ is a spectrum whose $MU$-homology is bounded below and
concentrated in even degrees,
then $MU_* (X \wedge Y)$ is concentrated in even degrees.
\end{lemma}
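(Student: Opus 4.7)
The plan is to induct on the cell structure of $X$ to show that $MU \wedge X$ splits as an $MU$-module into a wedge of even suspensions of $MU$, and then deduce the consequence about $MU_*(X \wedge Y)$ by smashing the splitting with $Y$.

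First, assume $X$ is a finite even-cell complex built out of cofiber sequences $X^{(n-1)} \to X^{(n)} \to S^{2k_n}$. Smashing with $MU$ yields cofiber sequences of $MU$-modules
\[
MU \wedge X^{(n-1)} \lto MU \wedge X^{(n)} \lto \Sigma^{2k_n} MU.
\]
I argue by induction on $n$ that $MU \wedge X^{(n)}$ splits as an $MU$-module into a wedge of even suspensions of $MU$. The base case $X^{(0)} = *$ is trivial. For the inductive step, the attaching map of the cofiber sequence is an $MU$-module map
\[
\Sigma^{2k_n - 1} MU \lto MU \wedge X^{(n-1)}.
\]
By the inductive hypothesis, $MU \wedge X^{(n-1)} \simeq \bigvee_i \Sigma^{2\ell_i} MU$, so this attaching map lands in $\bigoplus_i \pi_{2k_n - 1 - 2\ell_i} MU$, which vanishes because $MU$ has homotopy concentrated in even degrees. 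Hence the attaching map is nullhomotopic as an $MU$-module map, and the cofiber sequence splits, giving $MU \wedge X^{(n)} \simeq MU \wedge X^{(n-1)} \vee \Sigma^{2k_n} MU$.

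For a general bounded below, finite type, even-cell complex $X = \hocolim_n X^{(n)}$, the integers $k_n$ tend to $\infty$, so the splittings of $MU \wedge X^{(n)}$ constructed above are compatible under the structure maps (the new summand $\Sigma^{2k_n} MU$ is attached trivially). Smashing with $MU$ commutes with the homotopy colimit, and since the connectivities of the added wedge summands tend to $\infty$, the homotopy colimit agrees with the wedge of all the summands. Thus $MU \wedge X \simeq \bigvee_n \Sigma^{2k_n} MU$ as $MU$-modules.

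For the second claim, given $Y$ with $MU_* Y$ bounded below and concentrated in even degrees, smash the splitting with $Y$ to obtain
\[
MU \wedge X \wedge Y \simeq \bigvee_n \Sigma^{2k_n} (MU \wedge Y).
\]
Taking homotopy groups gives $MU_*(X \wedge Y) \cong \bigoplus_n \Sigma^{2k_n} MU_* Y$, which is concentrated in even degrees because each $2k_n$ is even and $MU_* Y$ is concentrated in even degrees. The main technical hurdle is the passage to the infinite case: I need to verify that the $MU$-module splittings at finite stages assemble into a splitting of the homotopy colimit, which uses the connectivity bound on $k_n$ to control the convergence and ensure that the wedge of the $\Sigma^{2k_n} MU$ computes the colimit.
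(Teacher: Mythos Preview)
Your proof is correct and follows the same inductive strategy as the paper: use the even-cell filtration together with the evenness of $\pi_* MU$ to see that each new cell splits off after smashing with $MU$. The one place your argument differs is the passage to the infinite case. You assemble the finite-stage splittings along the colimit and (correctly) flag the compatibility of these splittings as the point needing care. The paper avoids this by arguing purely on homotopy groups first---showing inductively that $MU_* X^{(n)}$ is free on even generators, hence $MU_* X$ is free---and only then realizing a chosen basis by a single $MU$-module map $\bigvee \Sigma^{2k_n} MU \to MU \wedge X$, which is an equivalence because it induces an isomorphism on $\pi_*$. That global construction sidesteps any coherence question about compatible splittings. Both routes work; the paper's is slightly cleaner for the colimit step.
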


\begin{proof}
Let $X$ be the homotopy colimit of the diagram
\[
X^{(0)} \lto X^{(1)} \lto X^{(2)} \lto \cdots,
\]
with cofiber sequences
\[
X^{(n-1)} \lto X^{(n)} \lto S^{2k_n}.
\]
Inductively assume that the $MU$-homology of $X^{(n-1)}$ is concentrated in even degrees. Since the $MU$-homology of $S^{2k_n}$ is also concentrated in even degrees, we get a short exact sequence
\[
0 \to MU_* X^{(n-1)} \lto MU_* X^{(n)} \lto MU_* S^{2k_n} \lto 0.
\]
Since the $MU$-homology of $S^{2k_n}$ is furthermore free as an $MU_*$-module, the sequence splits. Inductively, we see that $MU_* X^{(n)}$ is free as an $MU_*$-module. Since $MU_*$ commutes with filtered colimits, this follows for $MU_* X$ as well.
Finally, a basis $x_{2k_n}$ for $MU_* X$ gives rise to a map
\[
\bigvee \Sigma^{2k_n} MU \to MU \wedge X,
\]
which is an equivalence since it is an isomorphism on homotopy groups.
For the other statement, observe that
\[
MU_*(X\wedge Y) = \pi_*(MU\wedge X \wedge Y) \simeq \bigoplus \pi_*(\Sigma^{2k_n}MU \wedge Y) \simeq \bigoplus MU_{*-2k_n} Y,
\]
which is concentrated in even degrees.
\end{proof}

\begin{remark}
When $Y$ is $S^0$, Lemma \ref{lem:MU_*X-even} shows
that the $MU$-homology of a 
bounded below, finite type, even-cell complex is
concentrated in even degrees.
\end{remark}

\begin{cor}
\label{cor:Gamma-exact}
Let 
\[
X \lto Y \lto Z
\]
be a cofiber sequence of bounded below, finite type,
even-cell complexes, and let $W$ be a spectrum whose
$MU$-homology is bounded below and concentrated in even degrees.
Then
\[
\Gamma_\star \left( X \wedge W \right) \lto 
\Gamma_\star \left( Y \wedge W \right) \lto 
\Gamma_\star \left( Z \wedge W \right)
\]
is a cofiber sequence.
\end{cor}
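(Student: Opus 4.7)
The plan is to reduce to Proposition \ref{prop:Gamma-exact}. Smashing the given cofiber sequence with $W$ produces a cofiber sequence
\[
X \wedge W \lto Y \wedge W \lto Z \wedge W
\]
in $\Spt$, since smashing with a fixed spectrum preserves cofiber sequences. So it suffices to check the injectivity hypothesis of Proposition \ref{prop:Gamma-exact} for this new cofiber sequence, namely that $MU_{2w-1}(X \wedge W) \lto MU_{2w-1}(Y \wedge W)$ is injective for every integer $w$.

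The key observation is that this hypothesis will actually be trivially satisfied: I will show that $MU_{2w-1}(X \wedge W)$ vanishes, so the map in question is injective for vacuous reasons. This is exactly what Lemma \ref{lem:MU_*X-even} delivers. Indeed, $X$ is a bounded below, finite type, even-cell complex by hypothesis, and $W$ has $MU$-homology that is bounded below and concentrated in even degrees. Applying Lemma \ref{lem:MU_*X-even} with this $X$ and $Y=W$ yields that $MU_*(X \wedge W)$ is concentrated in even degrees, whence $MU_{2w-1}(X \wedge W) = 0$ for every $w$.

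Having verified the hypothesis, I then invoke Proposition \ref{prop:Gamma-exact} directly to conclude that
\[
\Gamma_\star(X \wedge W) \lto \Gamma_\star(Y \wedge W) \lto \Gamma_\star(Z \wedge W)
\]
is a cofiber sequence of filtered spectra. There is no genuine obstacle here, since all the real work has already been isolated into Lemma \ref{lem:MU_*X-even} and Proposition \ref{prop:Gamma-exact}; the corollary is essentially an assembly of these two tools. The only point requiring mild care is confirming that Lemma \ref{lem:MU_*X-even} truly applies to $W$ even though $W$ itself need not be an even-cell complex: its statement only requires $W$ to have $MU$-homology that is bounded below and concentrated in even degrees, which is exactly our hypothesis.
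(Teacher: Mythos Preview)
Your proof is correct and follows exactly the same approach as the paper's own proof, which simply states that Lemma~\ref{lem:MU_*X-even} establishes the hypothesis of Proposition~\ref{prop:Gamma-exact}. You have merely spelled out in more detail why that lemma yields the needed injectivity (namely, the odd $MU$-homology of $X \wedge W$ vanishes outright).
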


\begin{remark}
When $W$ is $S^0$,
Corollary \ref{cor:Gamma-exact} shows that $\Gast$
preserves cofiber sequences of bounded below, finite type,
even-cell complexes.
\end{remark}

\begin{proof}
Lemma \ref{lem:MU_*X-even} establishes the hypothesis of
Proposition \ref{prop:Gamma-exact}.
\end{proof}

\begin{lemma}
\label{lem:Gamma-seq-colimit}
Let 
\[
\cdots\lto X_i \lto X_{i+1} \lto \cdots
\]
be a sequential diagram of uniformly bounded below spectra such that the connectivity of the maps $X_i \to X_{i+1}$ tends to infinity. Let $X = \colim_i X_i$.
Then
\[
\hocolim_n \Gamma_\star \left( X_i\right) \lto 
\Gamma_\star \left( X\right)
\]
is an equivalence.
\end{lemma}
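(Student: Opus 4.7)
The plan is to reduce the statement to a pointwise (in filtered degree) check, and then to exchange $\Tot$ with the sequential colimit using a connectivity argument, following the template of Proposition \ref{prop:colim-Gamma}.

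First I would note that both $\hocolim$ and equivalences in $\SptZ$ are detected pointwise in the filtered direction (as recalled just before Definition \ref{defn:SptZ-spheres}). So it suffices to show, for each fixed $w \in \Zop$, that the natural map
\[
\hocolim_i \Tot \bigl(\tau_{\geq 2w}(X_i \wedge MU^{\bullet+1})\bigr) \lto \Tot \bigl(\tau_{\geq 2w}(X \wedge MU^{\bullet+1})\bigr)
\]
is an equivalence of spectra.

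Next I would move the sequential colimit past $\tau_{\geq 2w}$ and the smash with $MU^{\bullet+1}$ on the right-hand side. The smash product commutes with sequential colimits in each cosimplicial degree, and $\tau_{\geq 2w}$ commutes with filtered colimits because filtered colimits of spectra preserve connectivity. Thus the right-hand side is equivalent to $\Tot\bigl(\colim_i \tau_{\geq 2w}(X_i \wedge MU^{\bullet+1})\bigr)$, and the problem reduces to showing that $\Tot$ commutes with the sequential colimit.

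For the $\Tot$/colimit exchange I would mimic the proof of Proposition \ref{prop:colim-Gamma}. Write $\Tot = \lim_n \Tot^n$ as an inverse limit of the partial totalizations. Each $\Tot^n$ is a finite homotopy limit and therefore commutes with filtered colimits of spectra. The key observation is that the fiber of $\Tot^n \lto \Tot^{n-1}$, applied to $\tau_{\geq 2w}(Y \wedge MU^{\bullet+1})$, is (up to suspension) the $n$th normalized term, which is $(2w-1)$-connective and, after the loop shift from $\Tot^n$, has connectivity tending to $\infty$ with $n$ at a rate depending only on the lower bound $2w-1$ of the cosimplicial terms. Since the $X_i$ are uniformly bounded below, this connectivity estimate holds uniformly in $i$ (and for the colimit $X$). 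Hence the towers $\{\Tot^n(-)\}$ are uniformly pro-constant in high connectivity, so the limits agree with any sufficiently large $\Tot^n$ in a range, which means taking the sequential colimit commutes with $\lim_n \Tot^n$.

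The main obstacle is the last point, namely making the connectivity estimate for the Bousfield--Kan tower precise and uniform in $i$. The argument is standard once one exploits that $\tau_{\geq 2w}$ collapses the lower bound uniformly to $2w-1$, independent of $i$, so it is really just a matter of carefully recording the connectivity of the fibers of $\Tot^n \lto \Tot^{n-1}$ for a uniformly $(2w-1)$-connective cosimplicial spectrum and observing that it tends to infinity with $n$; once this is in hand, the exchange of $\Tot$ with the sequential colimit is automatic and the lemma follows.
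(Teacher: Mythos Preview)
Your overall plan---reduce to a fixed $w$, push the sequential colimit inside the cosimplicial object, and then exchange $\hocolim$ with $\Tot$ via the finite approximations $\Tot^n$---is exactly the paper's approach. The gap is in your justification of the connectivity estimate for the tower.

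You claim that for a cosimplicial spectrum whose terms are uniformly $(2w-1)$-connective, the fibers of $\Tot^n\to\Tot^{n-1}$ have connectivity tending to $+\infty$. This is false. That fiber is $\Omega^n N^n$, and the normalized object $N^n$ is a retract of the $n$th cosimplicial term, hence only $(2w-1)$-connective; after the loop shift $\Omega^n$ the connectivity drops to $2w-1-n$, which tends to $-\infty$. A uniform lower bound on the cosimplicial terms by itself never forces the $\Tot$ tower to converge, so the truncation $\tau_{\geq 2w}$ cannot supply the uniformity you need.

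The actual source of increasing connectivity is the uniform bounded-below hypothesis on the $X_i$ together with the specific structure of the $MU$-cobar resolution: the normalized $n$th term of $X_i\wedge MU^{\bullet+1}$ is equivalent to $X_i\wedge MU\wedge\overline{MU}^{\,\wedge n}$, and since $\overline{MU}$ is $1$-connected this is $(c+2n)$-connective, where $c$ is a common connectivity bound for the $X_i$. Thus $\Omega^n N^n$ is $(c+n)$-connective, uniformly in $i$. Applying $\tau_{\geq 2w}$ levelwise does not spoil this (the levelwise cofiber $\tau_{<2w}(X_i\wedge MU^{\bullet+1})$ has $(2w-1)$-truncated terms, hence $(2w-1)$-truncated normalized objects, and a comparison of long exact sequences shows the normalized terms of the truncated diagram remain at least $(c+2n)$-connective). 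This is precisely what the paper invokes with ``since $X_i$ and $X$ are bounded below, the vertical maps are isomorphisms on homotopy groups through a range increasing with $n$''; the word \emph{uniformly} in the hypothesis is what makes that range independent of $i$.
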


\begin{proof}
We observe that formation of the cosimplicial object $\tau_{\geq 2w}(X_i\wedge MU^{\bullet+1})$ commutes with filtered colimits levelwise. We have to check that in our given situation, the filtered colimit also commutes with totalisation. To see this, we first recall that filtered colimits always commute with finite limits. Thus, in the following diagram
\begin{center}
\begin{tikzpicture}
\matrix (m) [matrix of math nodes, row sep=2em, column sep=3em]
{\hocolim \Tot(\tau_{\geq 2w}(X_i\wedge MU^{\bullet+1})) & \Tot(\tau_{\geq 2w}(X\wedge MU^{\bullet+1}))\\
 \hocolim \Tot^n(\tau_{\geq 2w}(X_i\wedge MU^{\bullet+1})) & \Tot^n(\tau_{\geq 2w}(X_i\wedge MU^{\bullet+1}))\\};

\path[thick, -stealth, font=\small]

(m-1-1) edge (m-1-2)
(m-1-2) edge (m-2-2)
(m-2-1) edge (m-2-2)
(m-1-1) edge (m-2-1);
\end{tikzpicture}
\end{center}
the bottom map is an equivalence for any $n$. Since $X_i$ and $X$ are bounded below, the vertical maps are isomorphisms on homotopy groups through a range increasing with $n$. Since the upper horizontal map is independent of $n$, it follows that it induces an isomorphism on all homotopy groups, and thus is an equivalence.
\end{proof}

\begin{prop}
\label{prop:Gamma-smash}
Let $X$ be a bounded below, finite type, even-cell complex, and let $Y$ have $MU$-homology concentrated in even degrees.
Then
\[
\Gamma_\star X \gammasmash \Gamma_\star Y
\lto \Gamma_\star (X \wedge Y)
\]
is an equivalence.
\end{prop}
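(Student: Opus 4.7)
The plan is to verify the equivalence by cellular induction on $X$. I will start from even spheres, build up to finite even-cell complexes via the defining cofiber sequences, and then pass to a sequential colimit for the general case. Throughout, I will exploit naturality of the lax monoidal comparison map of Proposition \ref{prop:Gamma-lax-monoidal}, together with the exactness results of Corollary \ref{cor:Gamma-exact} and Lemma \ref{lem:Gamma-seq-colimit}, to reduce the claim to the base case.

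For $X = S^{2k}$, Lemma \ref{lem:Gamma-even-suspend} applied to $S^0$ identifies $\Gamma_\star S^{2k}$ with $\Sigma^{2k,k}\Gamma_\star S^0$, and applied to $Y$ identifies $\Gamma_\star(\Sigma^{2k}Y)$ with $\Sigma^{2k,k}\Gamma_\star Y$. Since $\Gamma_\star S^0$ is the unit of $\gammasmash$ and $\Sigma^{2k,k}$ commutes with Day convolution, the comparison map reduces to the canonical equivalence and the base case holds. For the inductive step on a finite complex, I will assume the claim for $X^{(n-1)}$ and for $S^{2k_n}$ and consider the cofiber sequence $X^{(n-1)} \lto X^{(n)} \lto S^{2k_n}$ from Definition \ref{defn:even-cell}. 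By Corollary \ref{cor:Gamma-exact} its image under $\Gamma_\star$ is a cofiber sequence, and smashing with $\Gamma_\star Y$ keeps it cofiber because $\gammasmash$ is exact in each variable. On the target side, smashing the original sequence with $Y$ and then applying $\Gamma_\star$ also yields a cofiber sequence, via Proposition \ref{prop:Gamma-exact}: the injectivity hypothesis on odd $MU$-homology is vacuous because Lemma \ref{lem:MU_*X-even} places each $MU_*(X^{(k)}\wedge Y)$ entirely in even degrees. The naturality square of the comparison map is then a morphism between two cofiber sequences in which two of the three vertical maps are equivalences, so two-out-of-three delivers the third.

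For a general $X$, I will write $X \simeq \hocolim_n X^{(n)}$ along its cell filtration. Day convolution commutes with homotopy colimits in each variable by \eqref{eq:Dayconv}, $\Gamma_\star$ commutes with this sequential colimit by Lemma \ref{lem:Gamma-seq-colimit}, and the same lemma applied to the diagram $\{X^{(n)}\wedge Y\}$ gives $\Gamma_\star(X\wedge Y)\simeq \hocolim_n \Gamma_\star(X^{(n)}\wedge Y)$; its hypotheses hold because the cofibers $S^{2k_{n+1}}\wedge Y$ become arbitrarily connected as $k_n\to\infty$. Commuting the colimit past $\Gamma_\star$ and $\gammasmash$ on both sides then reduces the full claim to the finite case already handled.

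The hard part will be the verification, in the inductive step, that $\Gamma_\star$ genuinely preserves the cofiber sequence $X^{(n-1)} \wedge Y \lto X^{(n)} \wedge Y \lto S^{2k_n}\wedge Y$ obtained by smashing the cellular cofiber sequence with $Y$: the relevant hypothesis is not about $X^{(n)}\wedge Y$ being an even-cell complex but rather about the vanishing of $MU$ in odd degrees. This reduces to the parity statement of Lemma \ref{lem:MU_*X-even}, which is precisely the reason the evenness assumption on $MU_*Y$ appears in the hypothesis. Once that input is in place, the rest of the argument is cell-by-cell bookkeeping along the filtration of $X$.
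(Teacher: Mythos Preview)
Your proposal is correct and follows essentially the same route as the paper: cellular induction on $X$, comparing the two cofiber sequences obtained by applying $\Gamma_\star$ before and after smashing with $Y$, and then passing to the sequential colimit via Lemma~\ref{lem:Gamma-seq-colimit}. The only cosmetic difference is that where you invoke Proposition~\ref{prop:Gamma-exact} together with Lemma~\ref{lem:MU_*X-even} for the target-side cofiber sequence, the paper packages this as a direct application of Corollary~\ref{cor:Gamma-exact} with $W = Y$.
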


\begin{proof}
Let $X$ be the homotopy colimit of 
\[
X^{(0)} \lto X^{(1)} \lto X^{(2)} \lto \cdots,
\]
with cofiber sequences
\[
X^{(n-1)} \lto X^{(n)} \lto S^{2k_n}.
\]
Corollary \ref{cor:Gamma-exact} shows that
\[
\Gamma_\star X^{(n-1)} \lto \Gamma_\star X^{(n)} \lto 
\Gamma_\star S^{2k_n}
\]
is a cofiber sequence.
As in the proof of Lemma \ref{lem:Gamma-seq-colimit},
\[
\Gamma_\star (X^{(n-1)} \wedge Y) \lto 
\Gamma_\star (X^{(n)} \wedge Y) \lto
\Gamma_\star (S^{2k_n} \wedge Y)
\]
is also a cofiber sequence.

We have a diagram
\begin{center}
\begin{tikzpicture}
\matrix (m) [matrix of math nodes, row sep=2em, column sep=3em]
{  
\Gamma_\star X^{(n-1)} \wedge_{\Gamma_\star S^0} \Gamma_\star Y &
\Gamma_\star X^{(n)} \wedge_{\Gamma_\star S^0} \Gamma_\star Y &
\Gamma_\star S^{2k_n} \wedge_{\Gamma_\star S^0} \Gamma_\star Y \\
\Gamma_\star (X^{(n-1)} \wedge Y) & \Gamma_\star (X^{(n)} \wedge Y) &
\Gamma_\star (S^{2k_n} \wedge Y) \\};

\path[thick, -stealth, font=\small]

(m-1-1) edge (m-1-2)
(m-1-2) edge (m-1-3)

(m-2-1) edge (m-2-2)
(m-2-2) edge (m-2-3)

(m-1-1) edge (m-2-1)
(m-1-2) edge (m-2-2)
(m-1-3) edge (m-2-3);
\end{tikzpicture}
\end{center}
in which the rows are cofiber sequences.
The right vertical map is an equivalence by 
Lemma \ref{lem:Gamma-even-suspend} and Example \ref{ex:Gamma-S^2}.
The left vertical map is an equivalence by an induction assumption.
Therefore, the middle vertical map is also an equivalence.

Apply homotopy colimits to obtain the equivalence
\[
\hocolim_n \left( \Gamma_\star X^{(n)} \gammasmash
\Gamma_\star Y \right)
\lto
\hocolim_n \left( \Gamma_\star (X^{(n)} \wedge Y) \right).
\]
The source of this map is equivalent to
$\Gamma_\star X \gammasmash \Gamma_\star Y$
because homotopy colimits commute with smash products,
and the target is equivalent
to $\Gamma_\star \left( X \wedge Y \right)$
by Lemma \ref{lem:Gamma-seq-colimit}.
\end{proof}

\section{The Steenrod algebra}
\label{sctn:Steenrod-algebra}

Consider the $\Gamma_\star S^0$-module $\Gamma_\star H\F_2$.
The Adams-Novikov spectral sequence for $H\F_2$
collapses.  It follows by inspection of the definition of
$\Gamma_\star$ that
$\Gamma_\star H\F_2$ is equal to the filtered spectrum
\[
\cdots * \lto * \lto H\F_2 \lto H\F_2 \lto \cdots,
\]
where the values are $*$ in filtrations greater than zero,
and the values are $H\F_2$ in filtrations less than or equal
to zero.
In particular,
$\pi_{\ast, \star} \Gamma_\star H \F_2$ is isomorphic to
$\F_2[\tau]$, where $\tau$ has degree $(0,-1)$.
Not coincidentally, these homotopy groups are isomorphic
to the motivic stable homotopy groups of the
$\C$-motivic Eilenberg-Mac Lane spectrum.

The goal of this section is to compute the Hopf algebra 
of co-operations on $\Gast H\F_2$.  
This Hopf algebra is the dual Steenrod algebra in the context
of $\Gast S^0$-modules.

\begin{defn}
Let $A_{*,\star}$ be the Hopf algebra
$\pi_{*, \star} \left( \Gamma_\star H\F_2 \gammasmash 
\Gamma_\star H\F_2 \right)_{*, \star}$.
\end{defn}

We begin by studying $\Gast BP$ and related objects.

\begin{example}
\label{ex:Gamma-BP}
The Adams-Novikov spectral sequence for $BP$
collapses.  It follows from the definition that
$\Gamma_\star BP$ is equal to the filtered spectrum
\[
\cdots \lto \tau_{\geq 4} BP \lto \tau_{\geq 2} BP \lto
BP \lto BP \lto \cdots.
\]

In particular, note that
$\pi_{\ast, \star} \Gamma_\star BP$ is isomorphic to
$\Z[\tau, v_1, v_2, \ldots]$, where $\tau$ has degree $(0,-1)$
and $v_i$ has degree $(2^{i+1}-2, 2^i - 1)$.
Not coincidentally, these homotopy groups are isomorphic
to the motivic stable homotopy groups of the
$\C$-motivic Brown-Peterson spectrum $BPGL$.
\end{example}

\begin{example}
\label{ex:Gamma-BP<n>}
Generalizing both $\Gast H\F_2$ and 
Example \ref{ex:Gamma-BP},
we obtain that 
the filtered spectrum $\Gamma_\star BP \langle n \rangle$ is 
\[
\cdots \lto \tau_{\geq 4} BP \langle n \rangle \lto 
\tau_{\geq 2} BP \langle n \rangle \lto
\tau_{\geq 0} BP \langle n \rangle \lto 
\tau_{\geq -2} BP \langle n \rangle \lto \cdots.
\]
\end{example}

If $R$ is a ring spectrum 
and $x$ is an indeterminant of degree $n$,
then we write $R[x]$ for the ring spectrum
$\bigvee_i \Sigma^{ni} R$,
with the obvious multiplication corresponding to multiplication
in a polynomial ring.
We use the same notation for a filtered ring spectrum $R$
and a bigraded indeterminant.
The object $R[x_0, x_1, \ldots]$ with multiple indeterminants
is defined analogously.

Recall that $BP \wedge BP$ is equivalent to
$BP[t_1, t_2, \ldots]$, where $t_i$ has degree
$2^{i+1} - 2$ \cite[Theorem 4.1.18]{Ravenel}.
We now establish an analogous result for $\Gamma_\star BP$.

\begin{prop}
\label{prop:BP-smash-BP}
The filtered spectrum
$\Gamma_\star BP \gammasmash \Gamma_\star BP$
is equivalent to the filtered spectrum
$\Gamma_\star BP [t_1, t_2, \ldots]$,
where $t_i$ has bidegree $(2^{i+1}-2, 2^i - 1)$.
\end{prop}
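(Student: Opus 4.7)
The plan is to reduce the problem through two successive identifications. First, by Proposition \ref{prop:Gamma-smash}, since $BP$ is a bounded-below, finite-type, even-cell complex (Example \ref{ex:even-cell}) whose $MU$-homology is concentrated in even degrees, the natural map
\[
\Gamma_\star BP \gammasmash \Gamma_\star BP \lto \Gamma_\star(BP \wedge BP)
\]
is an equivalence, and the task becomes to identify the right-hand side. Here I would invoke the classical splitting $BP \wedge BP \simeq \bigvee_I \Sigma^{|I|} BP$, where $I$ ranges over monomials in $t_1, t_2, \ldots$ and each $|I|$ is even because every $|t_i| = 2^{i+1}-2$ is even.

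The second step is to commute $\Gamma_\star$ past this infinite wedge. I would write the wedge as the sequential colimit of finite subwedges $X^{(n)} = \bigvee_{|I| \leq 2n} \Sigma^{|I|} BP$; the cofiber of $X^{(n)} \lto X^{(n+1)}$ is a wedge of copies of $\Sigma^{2(n+1)} BP$, so the connectivity of these maps tends to infinity. Lemma \ref{lem:Gamma-seq-colimit} then gives $\Gamma_\star(BP \wedge BP) \simeq \hocolim_n \Gamma_\star X^{(n)}$. For each fixed $n$, $X^{(n)}$ is a finite wedge, hence a finite product in $\Spt$, and $\Gamma_\star$ preserves finite products because it is built from $\tau_{\geq 2w}$ and $\Tot$, both of which commute with finite limits. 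Lemma \ref{lem:Gamma-even-suspend} then identifies each summand $\Gamma_\star(\Sigma^{|I|} BP)$ with $\Sigma^{|I|, |I|/2} \Gamma_\star BP$, and assembling over all $I$ yields $\bigvee_I \Sigma^{|I|,|I|/2} \Gamma_\star BP$. By the paper's notational conventions this is exactly $\Gamma_\star BP[t_1, t_2, \ldots]$, with $t_i$ sitting in bidegree $(2^{i+1}-2, 2^i-1) = (|t_i|, |t_i|/2)$.

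The main technical obstacle is this interchange of $\Gamma_\star$ with the infinite wedge: $\Gamma_\star$ involves the limit $\Tot$, so a priori it does not commute with arbitrary colimits. The two-stage argument above, which combines finite-limit preservation on each $X^{(n)}$ with the connectivity-based Lemma \ref{lem:Gamma-seq-colimit} for the sequential colimit, is tailored to bypass this obstruction. The rest is bookkeeping of bidegrees: the weight of $t_i$ comes out to be half its topological degree because Lemma \ref{lem:Gamma-even-suspend} sends $\Sigma^{2k}$ of a spectrum to $\Sigma^{2k,k}$ after applying $\Gamma_\star$.
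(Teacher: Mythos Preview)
Your proof is correct and follows essentially the same approach as the paper: apply Proposition~\ref{prop:Gamma-smash} to reduce to $\Gamma_\star(BP\wedge BP)$, invoke the classical splitting, commute $\Gamma_\star$ past the infinite wedge via a connectivity argument in the spirit of Lemma~\ref{lem:Gamma-seq-colimit}, and read off bidegrees from Lemma~\ref{lem:Gamma-even-suspend}. Where the paper simply writes ``an argument similar to the proof of Lemma~\ref{lem:Gamma-seq-colimit},'' you have spelled out that argument explicitly by decomposing into finite subwedges (handled because finite coproducts coincide with finite products in spectra, and $\tau_{\geq 2w}$ and $\Tot$ preserve finite limits) followed by a sequential colimit of increasing connectivity.
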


\begin{proof}
We observed in Example \ref{ex:even-cell} that
$BP$ is a bounded below, finite type, even-cell complex.
Therefore, Proposition \ref{prop:Gamma-smash} applies, and 
$\Gamma_\star BP \gammasmash \Gamma_\star BP$
is equivalent to
$\Gamma_\star (BP \wedge BP)$, which is equivalent to
$\Gamma_\star \left( BP [t_1, t_2, \ldots] \right)$.
An argument similar to the proof
of Lemma \ref{lem:Gamma-seq-colimit} shows that 
$\Gamma_\star$ commutes with the infinite wedge
that defines $BP[t_1, t_2, \ldots]$.
Finally, use Lemma \ref{lem:Gamma-even-suspend}
to determine the bidegree of $t_i$.
\end{proof}

\begin{prop}
\label{prop:BP_*BP}
The ring
$\pi_{*, \star} 
\left( \Gamma_\star BP \gammasmash \Gamma_\star BP \right)$
is isomorphic to 
\[
\Z[\tau][v_1, v_2, \ldots, t_1, t_2, \ldots],
\]
where $\tau$ has degree $(0,-1)$, and
$v_i$ and $t_i$ both have degree $(2^{i+1} - 2, 2^i - 1)$.
\end{prop}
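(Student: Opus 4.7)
The plan is to combine Proposition \ref{prop:BP-smash-BP}, which gives an equivalence of filtered spectra $\Gamma_\star BP \gammasmash \Gamma_\star BP \simeq \Gamma_\star BP[t_1, t_2, \ldots]$, with the computation of $\pi_{*,\star}\Gamma_\star BP$ from Example \ref{ex:Gamma-BP} to pin down the additive structure, and then use a comparison with the classical calculation of $BP_*BP$ to determine the ring structure.

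For the additive structure, the filtered spectrum $\Gamma_\star BP[t_1, t_2, \ldots]$ is by definition a wedge of bigraded suspensions of $\Gamma_\star BP$ indexed by monomials in the $t_i$'s, with shifts determined by Lemma \ref{lem:Gamma-even-suspend}. Taking $\pi_{*,\star}$ and substituting $\pi_{*,\star}\Gamma_\star BP = \Z[\tau][v_1, v_2, \ldots]$ from Example \ref{ex:Gamma-BP}, I identify $\pi_{*,\star}(\Gamma_\star BP \gammasmash \Gamma_\star BP)$ with the free $\Z[\tau][v_1, v_2, \ldots]$-module on monomials in $t_1, t_2, \ldots$ placed in the stated bidegrees. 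As a bigraded abelian group this is exactly the asserted polynomial ring.

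For the multiplicative structure, the ring structure on $\Gamma_\star BP \gammasmash \Gamma_\star BP$ is the canonical one inherited from the two $E_\infty$-ring factors. Via Proposition \ref{prop:Gamma-smash} (applicable since $BP$ is a bounded below, finite type, even-cell complex), this coincides with the ring structure on $\Gamma_\star(BP \wedge BP)$ induced by the multiplication of $BP \wedge BP$. I would then apply the multiplicative Bousfield--Kan spectral sequence of Proposition \ref{pro:anssgasts} to $X = BP \wedge BP$: its $E_2$-page is the truncated classical Adams--Novikov $E_2$-page of $BP \wedge BP$, which by Quillen's theorem is the polynomial ring $BP_*[t_1, t_2, \ldots]$ entirely concentrated in Adams--Novikov filtration $f = 0$. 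The spectral sequence therefore collapses at $E_2$, the classical generators $t_i$ lift to elements sitting on the boundary line $s + f = 2w$ with $f = 0$, and the ring structure on $\pi_{*,\star}\Gamma_\star(BP \wedge BP)$ matches the classical polynomial ring, just regraded into bidegrees by Lemma \ref{lem:Gamma-even-suspend}.

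The main obstacle, and really the only subtle point, is ruling out hidden multiplicative extensions: \emph{a priori} the product $t_i \cdot t_j$ could differ from the expected polynomial product by terms involving $\tau$ or the $v_k$'s of strictly higher Adams--Novikov filtration in the same total bidegree. This possibility is precluded by the collapse of the classical $MU$-based Adams--Novikov spectral sequence for $BP \wedge BP$ at filtration zero, which leaves no room whatsoever for correction terms of positive filtration.
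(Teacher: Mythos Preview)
Your proof is correct and follows essentially the same approach as the paper: the paper's proof is the one-liner ``This follows from Example~\ref{ex:Gamma-BP} and Proposition~\ref{prop:BP-smash-BP},'' and your additive argument is exactly this. Your additional spectral-sequence discussion of the ring structure and hidden extensions is more explicit than what the paper writes, but it is compatible with (and a reasonable elaboration of) the paper's implicit claim that the ring structure comes along with the equivalence $\Gamma_\star BP \gammasmash \Gamma_\star BP \simeq \Gamma_\star(BP\wedge BP) \simeq \Gamma_\star BP[t_1,t_2,\ldots]$.
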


\begin{proof}
This follows from Example \ref{ex:Gamma-BP} and
Proposition \ref{prop:BP-smash-BP}.
\end{proof}

\begin{defn}
The $\Gast H\F_2$-homology of a $\Gast S^0$-module $X$ is
\[
H_{*, \star} (X) = 
\pi_{*, \star} \left( X \gammasmash \Gast H\F_2 \right).
\]
\end{defn}

\begin{prop}
\label{prop:H_*BP}
The bigraded groups 
$H_{*, \star} \left( \Gamma_\star BP \right)$
are isomorphic to the free polynomial ring
$\F_2[\tau][\xi_1, \xi_2, \ldots ]$,
where $\tau$ has degree $(0,-1)$ and $\xi_n$ has degree
$(2^{n+1}-2, 2^n -1)$.
\end{prop}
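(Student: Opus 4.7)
The plan is to transfer the smash product inside $\Gast$, apply the classical splitting of $BP \wedge H\F_2$ into even suspensions of $H\F_2$, and assemble the answer using the suspension and exactness properties of $\Gast$ already established in this section.

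First, I would apply Proposition \ref{prop:Gamma-smash} with $X = BP$ and $Y = H\F_2$. The spectrum $BP$ is a bounded below, finite type, even-cell complex by Example \ref{ex:even-cell}, and the $MU$-homology of $H\F_2$ equals $H_*(MU;\F_2)$, which is concentrated in even degrees. This gives an equivalence
\[
\Gast BP \gammasmash \Gast H\F_2 \;\simeq\; \Gast(BP \wedge H\F_2),
\]
reducing the question to a computation of $\pi_{*,\star}\Gast(BP \wedge H\F_2)$.

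Next, I would use the classical fact that $H_*(BP;\F_2)$ is a polynomial $\F_2$-algebra on classes $\xi_n$ in degree $2^{n+1}-2$. A choice of monomial $\F_2$-basis yields an equivalence of $H\F_2$-modules
\[
BP \wedge H\F_2 \;\simeq\; \bigvee_\alpha \Sigma^{|\alpha|} H\F_2,
\]
with $|\alpha|$ always even. Filtering by total degree expresses this wedge as a sequential colimit of finite subwedges whose successive cofibers, each of the form $\Sigma^{|\alpha|} H\F_2$ with $|\alpha| \to \infty$, have connectivity tending to infinity. Since $\Gast$ preserves finite wedges (it is built from $\Tot$, $\tau_{\geq 2\star}$, and smash with $MU^{\bullet+1}$, each of which does), Lemma \ref{lem:Gamma-seq-colimit} lets us commute $\Gast$ past the full wedge. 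Lemma \ref{lem:Gamma-even-suspend} then gives
\[
\Gast(BP \wedge H\F_2) \;\simeq\; \bigvee_\alpha \Sigma^{|\alpha|,\,|\alpha|/2}\,\Gast H\F_2,
\]
and since $\pi_{*,\star}\Gast H\F_2 = \F_2[\tau]$, taking $\pi_{*,\star}$ produces one free $\F_2[\tau]$-summand for each monomial $\alpha$, placed in bidegree $(|\alpha|,|\alpha|/2)$. In particular, the class corresponding to $\xi_n$ sits in bidegree $(2^{n+1}-2,\,2^n-1)$, matching the claimed bigrading.

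For the multiplicative structure, Proposition \ref{prop:Gamma-lax-monoidal} makes $\Gast BP \gammasmash \Gast H\F_2$ into a ring object in $\Gast S^0$-modules, and Proposition \ref{prop:colim-Gamma} identifies its classical underlying ring as $H_*(BP;\F_2) = \F_2[\xi_1,\xi_2,\ldots]$. Combined with the free additive decomposition above, this forces $H_{*,\star}(\Gast BP)$ to be the polynomial ring $\F_2[\tau][\xi_1,\xi_2,\ldots]$. I expect the main technical point to be confirming that no hidden $\tau$-multiplications creep in among the generators, i.e.\ that the lifts of $\xi_n$ act as genuine polynomial generators over $\F_2[\tau]$; this is controlled by the explicit additive decomposition, which already shows that $\F_2[\tau]$ acts freely on each monomial summand.
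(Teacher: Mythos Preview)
Your argument is correct and takes a genuinely different route from the paper.

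The paper never passes through $\Gast(BP\wedge H\F_2)$. Instead it builds $\Gast H\F_2$ internally as $\hocolim_n X(n)$, where $X(-1)=\Gast BP$ and $X(n)$ is the cofiber of $v_n$ on $X(n-1)$; smashing this tower with $\Gast BP$ and feeding in the prior computation of $\pi_{*,\star}(\Gast BP \gammasmash \Gast BP)$ from Proposition~\ref{prop:BP_*BP}, it kills the $v_i$'s one at a time and reads off $\F_2[\tau][\xi_1,\xi_2,\ldots]$ in the colimit. Your approach bypasses Proposition~\ref{prop:BP_*BP} entirely: you invoke Proposition~\ref{prop:Gamma-smash} once to move the smash product inside $\Gast$, and then import the classical splitting of $BP\wedge H\F_2$ via Lemmas~\ref{lem:Gamma-even-suspend} and~\ref{lem:Gamma-seq-colimit}. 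This is more direct and reuses only the basic exactness properties of $\Gast$. The paper's approach, by contrast, establishes the inductive template that is immediately recycled in Proposition~\ref{prop:H_*BP<n>}, and its multiplicative structure is inherited cleanly from Proposition~\ref{prop:BP_*BP} at each stage.

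One small point worth tightening: your multiplicative argument is correct but could be stated more sharply. The additive decomposition places every $\F_2[\tau]$-module generator in a bidegree of the form $(2k,k)$, and in each such bidegree there is exactly one generator (corresponding to a unique classical monomial). Since products of your lifted $\xi_n$'s also lie in bidegrees $(2k,k)$ and are nonzero after applying $\colim_\star$ by Proposition~\ref{prop:colim-Gamma}, they must equal the corresponding generators on the nose, with no room for a $\tau$-correction. This is exactly what you say in your last sentence, but making the Chow-degree-zero observation explicit removes any residual doubt.
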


\begin{proof}
Let $X(-1)$ be $\Gamma_\star BP$, and define
$X(n)$ inductively to be the cofiber of
\[
v_n: \Sigma^{2^{n+1} - 2, 2^n - 1} X(n-1) \lto X(n-1).
\]
From the descriptions of 
$\Gamma_\star H\F_2$ and $\Gamma_\star BP$
at the beginning of Section \ref{sctn:Steenrod-algebra},
we see that $\Gamma_\star H\F_2$ is $\hocolim_n X(n)$.
This mimics the standard construction of $H\F_2$
as $BP/(v_0, v_1, \ldots)$.

We have cofiber sequences
\[
\xymatrix@C=15pt{
\Sigma^{2^{n+1}-2, 2^n - 1}
\Gamma_\star BP \gammasmash X(n-1) 
\ar[r]^-{v_n} &
\Gamma_\star BP \gammasmash X(n-1) \ar[r] & 
\Gamma_\star BP \gammasmash X(n).
}
\]
Starting from Proposition \ref{prop:BP_*BP},
we can analyze the associated long exact sequences in homotopy groups.
Inductively, we compute that
$\pi_{*, \star} \left( \Gamma_\star BP \gammasmash X(n) \right)$ is 
isomorphic to
$\F_2[\tau][v_{n+1}, v_{n+2}, \ldots, \xi_1, \xi_2, \ldots ]$.
\end{proof}

\begin{remark}
The proof of Proposition \ref{prop:H_*BP}
follows an argument that could be used in the classical
case to the homology of $BP$ from prior knowledge of $BP_* BP$.
\end{remark}

\begin{prop}
\label{prop:H_*BP<n>}
The bigraded groups 
$H_{*, \star} \left( \Gamma_\star BP \langle n \rangle \right)$
are isomorphic to
\[
\frac{\F_2[\tau][\tau_{n+1}, \tau_{n+2}, \ldots, \xi_1, \xi_2, \ldots ]}
{\tau_i^2 + \tau \xi_{i+1}},
\]
where $\tau$ has degree $(0,-1)$,
$\tau_i$ has degree $(2^{i+1}-1, 2^i-1)$ and $\xi_i$ has degree
$(2^{i+1}-2, 2^i -1)$.
\end{prop}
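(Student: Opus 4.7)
The plan is to adapt the inductive argument from the proof of Proposition \ref{prop:H_*BP}, now using the same tower $X(k) = \Gamma_\star BP/(v_0, \ldots, v_k)$ (with $\hocolim_k X(k) \simeq \Gamma_\star H\F_2$) smashed with $\Gamma_\star BP\langle n\rangle$ in place of $\Gamma_\star BP$. First I would establish the base case
\[
\pi_{*,\star}\bigl(\Gamma_\star BP\langle n\rangle \gammasmash \Gamma_\star BP\bigr) \cong \Z[\tau][v_1, \ldots, v_n, t_1, t_2, \ldots],
\]
which follows from Proposition \ref{prop:Gamma-smash} (applicable since $BP\langle n\rangle$ is a bounded below, finite type, even-cell complex by Example \ref{ex:even-cell}), combined with Proposition \ref{prop:BP-smash-BP} and Lemma \ref{lem:Gamma-even-suspend}.

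Next I would analyze, stage by stage, the long exact sequences arising from the cofiber sequences $\Sigma^{2^{k+1}-2, 2^k-1} X(k-1) \xrightarrow{v_k} X(k-1) \to X(k)$ smashed with $\Gamma_\star BP\langle n\rangle$. For $0 \leq k \leq n$, the standard right-unit congruence $\eta_R(v_k) \equiv v_k \pmod{(2, v_1, \ldots, v_{k-1})}$ in $BP_*BP$ identifies multiplication by $v_k$ in the stage $k-1$ quotient with the polynomial generator $v_k$; this is injective, and after step $n$ we arrive at $\F_2[\tau][t_1, t_2, \ldots]$. For $k > n$, the class $v_k$ vanishes in $\pi_{*,\star}(\Gamma_\star BP\langle n\rangle)$, so multiplication by $v_k$ on the current $\pi_{*,\star}$ is identically zero; the long exact sequence then splits additively, introducing a new generator $\tau_k$ of bidegree $(2^{k+1}-1, 2^k-1)$. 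Passing to the homotopy colimit via an analogue of Lemma \ref{lem:Gamma-seq-colimit} produces the claimed additive structure for $H_{*,\star}(\Gamma_\star BP\langle n\rangle)$.

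The hard part is the quadratic relation $\tau_k^2 = \tau \xi_{k+1}$. Although multiplication by $v_k$ vanishes on $\pi_{*,\star}$ for $k > n$, the spectrum-level map itself is non-null, so the cofibers are not genuine wedges and the resulting ring structure is nontrivial. I would extract the relation from the ring structure on $\Gamma_\star BP\langle n\rangle \gammasmash \Gamma_\star H\F_2$, tracking how the filtered lift of $v_k$ interacts with the next tower map: the contribution of $v_k$ to $\eta_R(v_{k+1})$, reduced modulo the previously-killed classes, produces after cofibering a $\tau$-multiple of $\xi_{k+1}$ occupying the bidegree of $\tau_k^2$. This is where the filtered structure enters essentially, since without the $\tau$-divisibility furnished by the filtration the square $\tau_k^2$ would simply vanish.
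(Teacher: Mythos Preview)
Your additive argument is sound but runs dually to the paper's. The paper fixes $\Gamma_\star H\F_2$ and builds $\Gamma_\star BP\langle n\rangle$ as the colimit of a tower $Y(k)$ obtained from $Y(n)=\Gamma_\star BP$ by successively coning off $v_{n+1}, v_{n+2},\ldots$; the base case is then literally Proposition~\ref{prop:H_*BP}, and every inductive step is of a single kind (each $v_{k+1}$ acts by zero on $H_{*,\star}(Y(k))$, verified via Proposition~\ref{prop:colim-Gamma} and the classical fact). Your route instead fixes $\Gamma_\star BP\langle n\rangle$ and reuses the tower $X(k)\to\Gamma_\star H\F_2$, at the cost of a separate base computation of $\pi_{*,\star}\bigl(\Gamma_\star BP\langle n\rangle\gammasmash\Gamma_\star BP\bigr)$ and two flavors of step (regular for $k\le n$, degenerate for $k>n$). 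Both reach the same free $\F_2[\tau]$-module.

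Where you work too hard is the multiplicative relation. Tracking $\eta_R(v_{k+1})$ through iterated cofibers is vague as stated and awkward to make rigorous (you would need ring structures on each $\Gamma_\star BP\langle n\rangle\gammasmash X(k)$ compatible with the tower, and a concrete mechanism linking the connecting maps to squaring). The paper's proof in fact stops at the additive statement; the ring relation is obtained by the same device made explicit in Theorem~\ref{thm:steenrod}. Namely, once $H_{*,\star}(\Gamma_\star BP\langle n\rangle)$ is known to be $\F_2[\tau]$-free, multiplication by $\tau$ is injective, so any product is determined by its image under $\colim_\star$. By Proposition~\ref{prop:Gamma-smash} and Proposition~\ref{prop:colim-Gamma} that image is the classical $H_*(BP\langle n\rangle;\F_2)$, where $\tau_k\mapsto\zeta_{k+1}$ and $\xi_k\mapsto\zeta_k^{\,2}$; the unique lift of $\zeta_{k+1}^{\,2}$ to the correct bidegree $(2^{k+2}-2,\,2^{k+1}-2)$ is $\tau\,\xi_{k+1}$. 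This replaces your last paragraph entirely.
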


\begin{proof}
Let $Y(n)$ be $\Gamma_\star BP$, and define
$Y(k)$ for $k> n$ inductively to be the cofiber of
\[
v_k: \Sigma^{2^{k+1} - 2, 2^k - 1} Y(k-1) \lto Y(k-1).
\]
From the descriptions of 
$\Gamma_\star BP$ and $\Gamma_\star BP \langle n \rangle$
at the beginning of Section \ref{sctn:Steenrod-algebra},
we see that $\Gamma_\star BP \langle n \rangle$ is 
$\hocolim_n Y(n)$.
This mimics the standard construction of $BP \langle n \rangle$
as $BP/(v_{n+1}, v_{n+2}, \ldots)$.

Suppose for induction that 
$H_{*, \star} \left( Y(k) \right)$
is isomorphic to
\[
\frac{\F_2[\tau][\tau_{n+1}, \tau_{n+2}, \ldots \tau_{k}, 
\xi_1, \xi_2, \ldots]}
{\tau_i^2 + \tau \xi_{i+1}}
\]
Proposition \ref{prop:H_*BP} establishes the base case.

Consider the cofiber sequence
\[
\xymatrix@C=14pt{
Y(k) \gammasmash \Gast H\F_2 \ar[r] & 
Y(k+1) \gammasmash \Gast H\F_2 \ar[r] &
\Sigma^{2^{k+2}-1, 2^{k+1} - 1} Y(k) \gammasmash \Gast H\F_2.
}
\]
Multiplication by $v_{k+1}$ is zero on 
$H_{*, \star} \left( Y(k) \right)$,
by Proposition \ref{prop:colim-Gamma} and the analogous 
classical fact.
Therefore, we have a short exact sequence
\[
\xymatrix@1{
H_{*, \star} \left( Y(k) \right) \ar[r] &
H_{*, \star} \left( Y(k+1) \right) \ar[r] &
H_{*, \star} \left( \Sigma^{2^{k+2}-1,2^{k+1}-1} Y(k) \right).
}
\]
This establishes the additive structure of 
$H_{*,\star} \left( Y(k+1) \right)$.  
\end{proof}

\begin{thm} \label{thm:steenrod}
The dual Steenrod algebra $A_{*, \star}$ is isomorphic to
\[
\frac{\F_2[\tau][\tau_0, \tau_1, \ldots, \xi_1, \xi_2, \ldots]}
{\tau_i^2 + \tau \xi_{i+1}},
\]
where the comultiplication is given by the formulas
\begin{equation*}
\Delta(\tau_i) = \tau_i \otimes 1 + \sum_{k=0}^i \ \xi_{i-k}^{2^k} \otimes \tau_{k},
\end{equation*}
and
\begin{equation*}
\Delta(\xi_i) = \sum_{k=0}^i \ \xi_{i-k}^{2^k} \otimes \xi_{k}.
\end{equation*} 
\end{thm}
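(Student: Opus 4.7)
The plan is to identify $A_{*,\star}$ by extending the inductive computation of Proposition~\ref{prop:H_*BP<n>} to kill every $v_n$ starting from $\Gast BP$. Using the tower from the proof of Proposition~\ref{prop:H_*BP}, write $\Gast H\F_2 = \hocolim_n X(n)$ where $X(-1) = \Gast BP$ and $X(n) = \operatorname{cofib}\bigl(v_n \colon \Sigma^{2^{n+1}-2,\,2^n-1} X(n-1) \to X(n-1)\bigr)$. Smashing this tower on the right with $\Gast H\F_2$ and applying $\pi_{*,\star}$ yields long exact sequences that I would analyze inductively, starting from the base case $H_{*,\star}(X(-1)) = \F_2[\tau][\xi_1,\xi_2,\ldots]$ of Proposition~\ref{prop:H_*BP}. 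At each stage, multiplication by $v_n$ acts trivially on $H_{*,\star}(X(n-1))$---by exactly the same reduction via Proposition~\ref{prop:colim-Gamma} used in Proposition~\ref{prop:H_*BP<n>}, falling back to the classical fact that $v_n$ is nullhomotopic on $H\F_2 \wedge H\F_2$. The long exact sequence thus degenerates into short exact sequences that introduce a single new generator $\tau_n$ of bidegree $(2^{n+1}-1,\,2^n-1)$. Taking $\hocolim_n$ (which commutes with $\pi_{*,\star}$ in this bounded-below filtered setting) establishes the underlying bigraded $\F_2[\tau]$-module structure of $A_{*,\star}$.

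The main obstacle is pinning down the multiplicative relation $\tau_i^2 + \tau\xi_{i+1} = 0$. Additively, the bidegree $(2^{i+2}-2,\,2^{i+1}-2)$ in which both $\tau_i^2$ and $\tau\xi_{i+1}$ sit is one-dimensional over $\F_2$, spanned by $\tau\xi_{i+1}$, so $\tau_i^2 = c\,\tau\xi_{i+1}$ for a unique $c \in \F_2$; everything reduces to showing $c = 1$. I would argue this by a $\colim_\star$ comparison: Proposition~\ref{prop:colim-Gamma} gives $\colim_\star \Gast H\F_2 \simeq H\F_2$, and a direct inspection of the Day convolution formula---exploiting that the filtration on $\Gast H\F_2$ is nontrivial only in non-positive weights, where every level is $H\F_2$---shows that $\colim_\star$ also sends $\Gast H\F_2 \gammasmash \Gast H\F_2$ to $H\F_2 \wedge H\F_2$. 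The induced ring map $A_{*,\star} \to A_* = \F_2[\xi_1,\xi_2,\ldots]$ sends $\tau \mapsto 1$, the motivic $\xi_i$ to the classical $\xi_i^2$, and $\tau_i$ to the classical $\xi_{i+1}$ (identifications forced by topological degree and connectedness of each target component); the relation then reads $\xi_{i+1}^2 = c\,\xi_{i+1}^2$ in $A_*$, whence $c = 1$.

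Finally, the comultiplication formulas follow by naturality. The unit map $\Gast BP \to \Gast H\F_2$ induces a map of Hopf algebroids out of $\pi_{*,\star}(\Gast BP \gammasmash \Gast BP)$, which by Proposition~\ref{prop:BP_*BP} carries the classical Landweber--Novikov coproduct on the $t_i$; their images are the $\xi_i$, giving the stated formula for $\Delta(\xi_i)$ verbatim. For $\Delta(\tau_i)$, I would propagate the coproduct through the cofiber sequences killing $v_0,\ldots,v_i$: each connecting map is a map of $A_{*,\star}$-comodules, and a bidegree count---together with the absence of signs in characteristic two---forces $\Delta(\tau_i) = \tau_i \otimes 1 + \sum_{k=0}^i \xi_{i-k}^{2^k} \otimes \tau_k$, with the inductive step essentially matching the classical Milnor recursion.
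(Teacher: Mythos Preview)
Your overall strategy matches the paper's: the additive structure via the tower $X(n)$ is exactly the case of Proposition~\ref{prop:H_*BP<n>} that corresponds to $H\F_2$, and deducing the multiplication and comultiplication by comparison to the classical dual Steenrod algebra through $\colim_\star$ is precisely what the paper's proof does by invoking Proposition~\ref{prop:colim-Gamma}.

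There is, however, a concrete error in your multiplicative step. The claim that the bidegree $(2^{i+2}-2,\,2^{i+1}-2)$ is one-dimensional over $\F_2$ is false for $i \geq 1$: already at $i=1$, the bidegree $(6,2)$ contains the three linearly independent basis monomials $\tau\xi_2$, $\tau\xi_1^3$, and $\tau_0\tau_1\xi_1$. So the reduction to a single undetermined constant $c \in \F_2$ does not go through as stated. The repair is easy and is what the paper's one-line argument implicitly uses: your additive computation already shows that $A_{*,\star}$ is free as an $\F_2[\tau]$-module, hence the localization map $A_{*,\star} \to A_{*,\star}[\tau^{-1}]$ is injective. Since $\colim_\star$ identifies $A_{*,\star}[\tau^{-1}]$ with $A_*[\tau^{\pm 1}]$ as a ring, the classical relation forces $\tau_i^2 = \tau\xi_{i+1}$ on the nose. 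The same $\tau$-torsion-freeness argument determines the comultiplication directly from the classical Milnor coproduct, so your separate treatment via the $BP$ Hopf algebroid for the $\xi_i$ and propagation through cofiber sequences for the $\tau_i$, while not wrong, is more work than the paper's route requires.
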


\begin{proof}
The additive structure is given by the $n = 0$ case of
Proposition \ref{prop:H_*BP<n>}.

The formulas for the multiplication and comultiplication follow by 
Proposition \ref{prop:colim-Gamma} and the analogous
classical formulas.
\end{proof}

\section{Motivic modular forms}
\label{sctn:mmf}

In this section, we study the $\Gamma_\star S^0$-module
$\Gamma_\star \tmf$.  We will show that this
object has the desired properties of a ``motivic modular
forms" spectrum.
Note that $\Gamma_\star \tmf$ is an $E_\infty$-ring by
Proposition \ref{prop:Gamma-lax-monoidal}.

In Section \ref{sctn:Steenrod-algebra}, 
we worked with $\Gamma_\star H\F_2$-homology 
because the dual Steenrod algebra is easier to describe
than the Steenrod algebra.
We now work with the dual $\Gamma_\star H\F_2$-cohomology
because it is easier to state the specific computational
results that we are pursuing.

\begin{defn}
\label{defn:HF2-cohlgy}
The
$\Gamma_\star H\F_2$-cohomology of a 
$\Gast S^0$-module $X$ is
\[
H^{*,*} (X) = \pi_{*,*} F_{\Gast S^0} \left( X, \Gast H\F_2 \right),
\]
where $F_{\Gast S^0}(-,-)$ is the internal function object
in the category of $\Gast S^0$-modules.
\end{defn}

Under suitable bounded below, finite type assumptions on $X$,
the $\Gamma_\star H\F_2$-co\-hom\-ology of $X$ 
and the $\Gamma_\star H\F_2$-homology of $X$ are 
algebraic $\F_2[\tau]$-duals.

The main goal is to show that
the $\Gamma_\star H\F_2$-cohomology of
$\Gast \tmf$ is isomorphic to
$A\sslash A(2)$.
This implies that the cohomology of $A(2)$
is the $E_2$-page of the 
$\Gamma_\star H\F_2$-based 
Adams spectral sequence for $\Gamma_\star \tmf$.

\begin{defn}
\label{defn:A}
Let $A$ be the dual of $A_{*, \star}$.
Using the monomial basis of $A_{*, \star}$, let:
\begin{enumerate}
\item
$\Sq^1$ be dual to $\tau_0$.
\item
$Q_i$ be dual to $\tau_i$.
\item
$\Sq^{2^n}$ be dual to $\xi_1^{2^{n-1}}$ for $n \geq 1$.
\item
$P_j^0$ be dual to $\tau_{j-1}$ for $j \geq 1$.
\item
$P_j^i$ be dual to $\xi_j^{2^{i-1}}$ for $i \geq 1$ and $j \geq 1$.
\end{enumerate}
\end{defn}

We will need to refer to some quotients of the dual
Steenrod algebra $A_{*, \star}$ computed in
Theorem \ref{thm:steenrod}.

\begin{defn}
\label{defn:A(n)-E(n)}
\mbox{}
\begin{enumerate}
\item
Let $A(n)_{*, \star}$ be the quotient 
\[
\frac{\F_2[\tau][\tau_0, \tau_1, \ldots, \tau_n, 
\xi_1, \xi_2, \ldots, \xi_n]}
{\tau_i^2 + \tau \xi_{i+1}, 
\xi_1^{2^n}, \xi_2^{2^{n-1}}, \ldots, 
\xi_{n}^2, \tau_n^2}
\]
of $A_{*, \star}$, and let $A(n)$ be the dual subalgebra
of $A$.
\item
Let $E(n)_{*, \star}$ be the quotient
\[
\frac{\F_2[\tau][\tau_0, \tau_1, \ldots, \tau_n]}
{\tau_0^2, \tau_1^2, \ldots \tau_n^2}
\]
of $A_{*, \star}$, and let $E(n)$ be the dual subalgebra of $A$.
\end{enumerate}
\end{defn}

It is straightforward to check that
$E(n)$ is an exterior algebra on the elements $Q_0, Q_1, \ldots Q_n$,
and $A(n)$ is the subalgebra of $A$ generated by
$\Sq^1$, $\Sq^2$, \ldots, $\Sq^{2^n}$.

\begin{defn}
We define the finite even-cell complexes:
\begin{enumerate}
\item
$X$ is the cofiber of $\nu: S^3 \lto S^0$.
\item
$Y$ is the cofiber of $\eta: \Sigma X \lto X$.
\item
$Z$ is the cofiber of $w_1: \Sigma^5 Y \lto Y$.
\end{enumerate}
\end{defn}

The best way to describe $w_1$ is in terms of
the Adams spectral sequence for maps
$\Sigma^5 Y \lto Y$.  It is detected by the element
whose May spectral sequence name is $h_{21}$.
This is essential below in Lemma \ref{lemma:cofibrefinitecomplex}
when we relate $w_1$ to the Steenrod operation $P_2^1$.

For our purposes, the essential property of $Z$ is that
$\tmf \wedge Z$ is equivalent to $BP\langle 2 \rangle$
\cite{Mathew16}.
We will start with the cohomology of 
$\Gast \tmf \gammasmash \Gast Z$, and then work backwards to obtain
the cohomology of 
$\Gast \tmf \gammasmash \Gast Y$, 
$\Gast \tmf \gammasmash \Gast X$, and finally
$\Gast \tmf$.
The basic idea is not original.  See, for example,
\cite[Proposition 1.7]{Wilson75}
for an analogous argument that computes the
homology of $BP\langle n \rangle$.  
See also \cite[Section 5]{IS} for a motivic version
of an argument that computes the cohomology of $ko$.

\begin{lemma}
\label{lemma:cofibrefinitecomplex}
In the category of $\Gamma_\star S^0$-modules,
there are cofiber sequences
\[
\xymatrix@1{
\Sigma^{3,2} \Gast S^0 \ar[r]^-{\Gast \Sigma \nu} &
\Gast S^0 \ar[r]^-{\iota_X} & 
\Gast X \ar[r]^-{\pi_X} &
\Sigma^{4,2} \Gast S^0,
}
\]
\[
\xymatrix@1{
\Sigma^{1,1} \Gast X \ar[r]^-{\Gast \Sigma \eta} &
\Gast X \ar[r]^-{\iota_Y} & 
\Gast Y \ar[r]^-{\pi_Y} & \Sigma^{2,1} \Gast X,
}
\]
and
\[
\xymatrix@1{
\Sigma^{5,3} \Gast Y \ar[r]^-{\Gast \Sigma w_1} &
\Gast Y \ar[r]^-{\iota_Z} & 
\Gast Z \ar[r]^-{\pi_Z} &
\Sigma^{6,3} \Gast Y.
}
\]
In $\Gast H\F_2$-cohomology, the compositions
\[
\xymatrix@1{
\Gast X \ar[r]^-{\pi_X} & 
\Sigma^{4,2} \Gast S^0 \ar[r]^-{\Gast \nu} & 
\Sigma^{4,2} \Gast X,
}
\]
\[
\xymatrix@1{
\Gast Y \ar[r]^-{\pi_Y} & 
\Sigma^{2,1} \Gast X \ar[r]^-{\Gast \eta} & 
\Sigma^{1,0} \Gast X,
}
\]
and
\[
\xymatrix@1{
\Gast Z \ar[r]^-{\pi_Z} & 
\Sigma^{6,3} \Gast Y \ar[r]^-{\Gast w_1} & 
\Sigma^{6,3} \Gast Z
}
\]
are multiplication by $\Sq^4 = (\xi_1^2)^\vee$,
$\Sq^2 = \xi_1^\vee$,
and $P_2^1 = \xi_2^\vee$ respectively.
\end{lemma}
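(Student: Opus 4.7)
The proof plan divides into two stages. Stage one establishes the three cofiber sequences. The idea is to rotate each classical defining cofiber sequence so that all three consecutive terms are bounded below, finite type, even-cell complexes: $S^3 \xrightarrow{\nu} S^0 \to X$ becomes $S^0 \to X \to S^4$; $\Sigma X \xrightarrow{\eta} X \to Y$ becomes $X \to Y \to \Sigma^2 X$; and $\Sigma^5 Y \xrightarrow{w_1} Y \to Z$ becomes $Y \to Z \to \Sigma^6 Y$. In each case all three terms are even-cell (using that $X$ has cells in $\{0,4\}$, $Y$ in $\{0,2,4,6\}$, $Z$ in $\{0,2,4,6,8,10,12\}$), so Corollary \ref{cor:Gamma-exact} yields cofiber sequences in $\Mod_{\Gast S^0}$. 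Applying Lemma \ref{lem:Gamma-even-suspend} to identify the third terms as $\Sigma^{4,2}\Gast S^0$, $\Sigma^{2,1}\Gast X$, and $\Sigma^{6,3}\Gast Y$, one rotates back once and identifies the connecting maps as $\Gast(\Sigma\nu)$, $\Gast(\Sigma\eta)$, $\Gast(\Sigma w_1)$ by pinning down their homotopy classes via $\colim_\star$ (Proposition \ref{prop:colim-Gamma}). The bidegrees $(3,2)$, $(1,1)$, $(5,3)$ then follow from combining Lemma \ref{lem:Gamma-even-suspend} with Lemma \ref{lem:Gamma-odd-suspend}, whose hypothesis on even $MU$-homology is verified via Lemma \ref{lem:MU_*X-even}.

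Stage two is the cohomological identification. The plan is to reduce each statement to a classical fact about the detection of $\nu$, $\eta$, and $w_1$ in the Adams spectral sequence. Each composite in question represents, in $\Gast H\F_2$-cohomology, an operation that sends the image of the top-cell class in the source back to the bottom-cell class: classically this is $\Sq^4$ for the $X$-sequence (since $\nu$ is detected by $\Sq^4$), $\Sq^2$ for the $Y$-sequence (since $\eta$ is detected by $\Sq^2$), and $P_2^1$ for the $Z$-sequence (since $w_1$ is defined so that it is detected by the May element $h_{21}$, which under the identifications of Definition \ref{defn:A} corresponds to the Milnor primitive $P_2^1 = \xi_2^\vee$). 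Because Theorem \ref{thm:steenrod} shows that the $\Gast S^0$-module dual Steenrod algebra $A_{*,\star}$ has the same monomial structure as the classical dual Steenrod algebra, and because the bidegrees of the operations in question match exactly with no room for a $\tau$-discrepancy, the classical identifications transfer unchanged.

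The hardest step will be the $P_2^1$-identification for $w_1$. Unlike $\nu$ and $\eta$, the map $w_1$ is not a canonical Hopf map whose cohomological detector is standard; it is instead defined by its Adams spectral sequence filtration. My plan is to first prove the classical statement that the attaching map of the top cell of $Z$ to the bottom cell of $Y$ is detected in cohomology by $P_2^1$, either by directly unwinding the May spectral sequence identification $w_1 \leftrightarrow h_{21}$, or by invoking the identification $\tmf \wedge Z \simeq BP\langle 2\rangle$ of \cite{Mathew16} and reading off the $A$-module structure from Proposition \ref{prop:H_*BP<n>} dualized. Once the classical identification is in hand, the $\Gast S^0$-module version follows by naturality under $\colim_\star$ together with the observation that the $\Gast H\F_2$-cohomology of an even-cell complex is an $A$-module faithfully mirroring the classical cohomology in the relevant bidegrees.
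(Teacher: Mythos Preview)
Your proposal is correct and follows essentially the same route as the paper: rotate each defining cofiber sequence so that all three terms are even-cell complexes, apply Corollary~\ref{cor:Gamma-exact} and Lemma~\ref{lem:Gamma-even-suspend}, then rotate back; for the cohomological identifications, reduce to the classical detection of $\nu$, $\eta$, $w_1$ in Adams filtration~$1$ via Proposition~\ref{prop:colim-Gamma}. Your write-up is in fact more explicit than the paper's in several places (the cell dimensions of $X$, $Y$, $Z$; the invocation of Lemma~\ref{lem:Gamma-odd-suspend} for the odd-suspension source terms; the alternative strategy for $w_1$ via $BP\langle 2\rangle$), but the underlying argument is the same.
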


\begin{proof}
Apply Corollary \ref{cor:Gamma-exact}
to the cofiber sequence
\[
\xymatrix@1{
S^0 \ar[r] & X \ar[r] & S^4
}
\]
to obtain the cofiber sequence
\[
\xymatrix@1{
\Gast S^0 \ar[r] & \Gast X \ar[r] & \Gast S^4.
}
\]
Use Lemma \ref{lem:Gamma-even-suspend} to identify the third term,
and then rotate to obtain the cofiber sequence in part (1).
The arguments for part (2) and part (3) are essentially identical.

The assertion about the action in cohomology follows by 
Proposition \ref{prop:colim-Gamma} and the analogous
classical facts.
One way to understand the maps in classical cohomology
is to observe that $\nu$, $\eta$, and $w_1$ are represented
in the Adams spectral sequence in filtration $1$. They are
detected by $h_2$, $h_1$, and $h_{21}$, which have cobar
representatives $[\zeta_1^2]$, $[\zeta_1]$, and $[\zeta_2^2]$
respectively.
\end{proof}

\begin{lemma}
\label{lem:mutmf-even-degrees}
$MU_* \tmf$ is concentrated in even degrees. (Compare the stronger Corollary 5.2 in \cite{Mathew16}.)
\end{lemma}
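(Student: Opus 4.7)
The plan is to bootstrap evenness of $MU_* \tmf$ from the known evenness of $MU_* BP\langle 2\rangle$, using Mathew's equivalence $\tmf \wedge Z \simeq BP\langle 2\rangle$ together with the even-cell structure of $Z$ that has already been set up in the paper.

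First, I would verify that $Z$ is a finite even-cell complex by tracing through the defining cofiber sequences. $X$ has cells in dimensions $\{0,4\}$, then $Y$ (the cofiber of $\eta \colon \Sigma X \to X$) has cells in dimensions $\{0,2,4,6\}$, and finally $Z$ (the cofiber of $w_1 \colon \Sigma^5 Y \to Y$) has cells in dimensions $\{0,2,4,6,6,8,10,12\}$. In particular, every cell of $Z$ lies in an even dimension.

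Next, I would apply Lemma \ref{lem:MU_*X-even} to $Z$ to obtain an $MU$-module equivalence $MU \wedge Z \simeq \bigvee_n \Sigma^{2k_n} MU$, where the $2k_n$ are the cell dimensions. Smashing with $\tmf$ and taking homotopy groups gives
\[
MU_*(\tmf \wedge Z) \;\cong\; \bigoplus_n MU_{*-2k_n}\tmf.
\]
By \cite[Corollary 5.2]{Mathew16}, the left-hand side is isomorphic to $MU_* BP\langle 2 \rangle$. Since $BP\langle 2 \rangle$ is a bounded-below, finite-type, even-cell complex (Example \ref{ex:even-cell}), another application of Lemma \ref{lem:MU_*X-even} (with $Y = S^0$) shows that $MU_* BP\langle 2 \rangle$ is concentrated in even degrees.

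Putting this together, $\bigoplus_n MU_{*-2k_n}\tmf$ is concentrated in even degrees. Since each shift $2k_n$ is \emph{even}, it preserves parity, so every summand $MU_{*-2k_n}\tmf$ is concentrated in even degrees, and hence so is $MU_*\tmf$. The only nontrivial input is Mathew's equivalence $\tmf \wedge Z \simeq BP\langle 2 \rangle$; the rest is formal manipulation of the even-cell structure, so I do not expect any serious obstacle. (If one wanted to avoid invoking the direct splitting of $MU\wedge Z$, one could alternatively argue inductively through the three cofiber sequences $S^0 \to X$, $X \to Y$, $Y \to Z$, using the long exact sequences in $MU_*(-\wedge\tmf)$; however, the odd-degree shift by $5$ in the $w_1$ cofiber sequence makes that route considerably less clean.)
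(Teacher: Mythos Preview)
Your argument is correct and in fact slightly slicker than the paper's. The paper takes exactly the inductive route you sketch in your final parenthetical: it observes that each of $\nu$, $\eta$, $w_1$ becomes null after smashing with $MU$ (since source and target are free $MU$-modules and the maps vanish in $MU$-homology for degree reasons), so the three cofiber sequences give short exact sequences with \emph{even} shifts $4$, $2$, $6$; one then descends from the evenness of $MU_*(\tmf\wedge Z)\cong MU_*BP\langle 2\rangle$ to that of $MU_*(\tmf\wedge Y)$, $MU_*(\tmf\wedge X)$, and finally $MU_*\tmf$. So your worry about the odd shift by $5$ is handled in the paper by the null-homotopy trick, not avoided. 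Your approach instead packages all three steps at once via the $MU$-module splitting $MU\wedge Z\simeq\bigvee\Sigma^{2k_n}MU$ from Lemma~\ref{lem:MU_*X-even}; this is shorter and uses nothing beyond what is already available. The paper's route has the minor side benefit of recording that $MU_*(\tmf\wedge X)$ and $MU_*(\tmf\wedge Y)$ are even as well, though this is not used later.
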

\begin{proof}
Consider the maps
\begin{align*}
\nu: S^3 \lto S^0\\
\eta: \Sigma X \lto X\\
w_1: \Sigma^5 Y \lto Y.
\end{align*}
The targets of these maps are bounded-below, finite type, even cell
complexes, while the sources are suspensions of bounded-below,
finite type, even cell complexes.
For degree reasons, these maps are zero in $MU$-homology.
After smashing with $MU$, all three maps
have sources and targets that are free $MU$-modules
by Lemma \ref{lem:MU_*X-even}.
This means that they are null-homotopic after smashing
with $MU$, since they are zero
in $MU$-homology.

We get short exact sequences
\begin{gather*}
0 \lto MU_* \tmf \lto MU_*(\tmf \wedge X) \lto MU_*(\Sigma^4 \tmf)\lto 0\\
0 \lto MU_* (\tmf \wedge X) \lto MU_*(\tmf \wedge Y) \lto MU_*(\Sigma^2 \tmf \wedge X) \lto 0\\
0 \lto MU_* (\tmf \wedge Y) \lto MU_*(\tmf \wedge Z) \lto MU_*(\Sigma^6 \tmf \wedge Z) \lto 0
\end{gather*}
Starting with the classical equivalence $BP\langle 2 \rangle \simeq \tmf \wedge Z$ \cite{Mathew16}, we see that
$MU_*(\tmf \wedge Z)$ is concentrated in even degrees, since $MU_*(BP\langle 2\rangle)$ is. The exact sequences then imply that $MU_*(\tmf\wedge Y)$, $MU_*(\tmf \wedge X)$ and finally $MU_*(\tmf)$ are also concentrated in even degrees.
\end{proof}

\begin{prop}
\label{prop:H-tmf-smash-Z}
$H^{*,\star} \left( \Gamma_\star \tmf \gammasmash \Gamma_\star Z \right)$
is isomorphic to $A\sslash E(2)$.
\end{prop}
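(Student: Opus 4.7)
My plan is to identify $\Gamma_\star \tmf \gammasmash \Gamma_\star Z$ with $\Gamma_\star BP\langle 2 \rangle$ and then compute its $\Gamma_\star H\F_2$-cohomology directly. First, I would apply Proposition \ref{prop:Gamma-smash} to conclude
$$\Gamma_\star \tmf \gammasmash \Gamma_\star Z \simeq \Gamma_\star(\tmf \wedge Z).$$
The required hypotheses are met: $Z$ is a bounded below, finite type, even-cell complex by construction, and $\tmf$ has $MU$-homology concentrated in even degrees by Lemma \ref{lem:mutmf-even-degrees}. Combined with the classical equivalence $\tmf \wedge Z \simeq BP\langle 2 \rangle$ of Mathew \cite{Mathew16}, this gives $\Gamma_\star \tmf \gammasmash \Gamma_\star Z \simeq \Gamma_\star BP\langle 2 \rangle$.

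Second, the $n=2$ case of Proposition \ref{prop:H_*BP<n>} yields
$$H_{*,\star}(\Gamma_\star BP\langle 2 \rangle) \cong \frac{\F_2[\tau][\tau_3, \tau_4, \ldots, \xi_1, \xi_2, \ldots]}{(\tau_i^2 + \tau \xi_{i+1})}.$$
Since $\Gamma_\star BP\langle 2 \rangle$ is bounded below and of finite type in each bidegree, the $\Gamma_\star H\F_2$-cohomology is the $\F_2[\tau]$-linear dual of this homology, as an $A$-module (equivalently, an $A_{*,\star}$-comodule).

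Third, I would identify this $A$-module with $A\sslash E(2)$. By Definition \ref{defn:A(n)-E(n)}, the quotient Hopf algebra map $A_{*,\star} \to E(2)_{*,\star}$ sends every $\xi_i$ to $0$ and every $\tau_j$ with $j \geq 3$ to $0$. Using the coproduct formulas of Theorem \ref{thm:steenrod}, one checks that $A_{*,\star}$ is free as a right $E(2)_{*,\star}$-comodule and that the cotensor product $A_{*,\star} \square_{E(2)_{*,\star}} \F_2[\tau]$ is isomorphic as a left $A_{*,\star}$-comodule to the presentation above. Dualizing over $\F_2[\tau]$ then gives the desired isomorphism with $A\sslash E(2)$ as an $A$-module.

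The main obstacle will be the last identification: establishing the freeness of $A_{*,\star}$ over $E(2)_{*,\star}$ and matching the comodule structure on the cotensor product with that on $H_{*,\star}(\Gamma_\star BP\langle 2 \rangle)$. This is delicate because of the twisted relation $\tau_i^2 = \tau \xi_{i+1}$, which forces one to distinguish carefully between algebra generators and $\F_2[\tau]$-module generators. A cleaner alternative, mirroring the classical proof, would be to compute the $A$-module structure on the cohomology inductively along the filtration $Y(k)$ used in Proposition \ref{prop:H_*BP<n>}, tracking the action of the Milnor primitives $Q_i$ from Definition \ref{defn:A} at each stage and verifying that they act freely below filtration $3$ and vanish above.
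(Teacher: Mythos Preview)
Your proposal is correct and follows essentially the same route as the paper: identify $\Gamma_\star \tmf \gammasmash \Gamma_\star Z$ with $\Gamma_\star BP\langle 2\rangle$ via Proposition~\ref{prop:Gamma-smash} and Lemma~\ref{lem:mutmf-even-degrees}, then invoke the $n=2$ case of Proposition~\ref{prop:H_*BP<n>} and dualize. The paper treats the final identification of the dual with $A\sslash E(2)$ as immediate, so your concern there is overcautious rather than a gap; the comodule structure on $H_{*,\star}(\Gamma_\star BP\langle 2\rangle)$ is inherited from $A_{*,\star}$ along the unit map, and the cotensor description then follows from the standard Milnor--Moore argument for quotient Hopf algebras.
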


\begin{proof}
Starting with the equivalence
$BP\langle 2 \rangle \simeq \tmf \wedge Z$,
apply Proposition \ref{prop:Gamma-smash} together with Lemma \ref{lem:mutmf-even-degrees} to see that
\[
\Gamma_\star BP\langle 2 \rangle \simeq 
\Gamma_\star \tmf \gammasmash \Gamma_\star Z.
\]
Dualizing 
Proposition \ref{prop:H_*BP<n>}, we have that
$H^{*,\star} \left( \Gamma_\star BP\langle 2 \rangle \right)$
is isomorphic to $A\sslash E(2)$.
\end{proof}

\begin{prop}
\label{prop:H-tmf-smash-Y}
$H^{*,\star} \left( \Gamma_\star \tmf \gammasmash \Gamma_\star Y \right)$
is isomorphic to $A\sslash F$, where $F$ is the subalgebra of $A$ generated
by $Q_0$, $Q_1$, $Q_2$, and $P_2^1$.
\end{prop}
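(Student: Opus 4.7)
The plan is to read off $M := H^{*,\star}(\Gast\tmf \gammasmash \Gast Y)$ from $N := H^{*,\star}(\Gast\tmf \gammasmash \Gast Z) \cong A\sslash E(2)$ (Proposition \ref{prop:H-tmf-smash-Z}) using the cofiber sequence
\[
\Sigma^{5,3}\Gast Y \xrightarrow{\Gast\Sigma w_1} \Gast Y \xrightarrow{\iota_Z} \Gast Z \xrightarrow{\pi_Z} \Sigma^{6,3}\Gast Y
\]
of Lemma \ref{lemma:cofibrefinitecomplex}. I would smash this with $\Gast\tmf$ (which preserves cofiber sequences, as $\Gast\tmf \gammasmash(-)$ is exact on $\Gast S^0$-modules) and apply $H^{*,\star} = \pi_{*,\star} F_{\Gast S^0}(-, \Gast H\F_2)$ to obtain a long exact sequence whose terms alternate between $N$ and appropriately shifted copies of $M$, with the boundary map $w_1^*$ induced by $w_1 \otimes \mathrm{id}$.

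The crux is to show $w_1^* = 0$. Classically, $w_1$ has Adams filtration one (it is detected by $h_{21}$), so $H\F_2 \wedge w_1$ is nullhomotopic. Because $Y$ is a finite even-cell complex and $MU_*H\F_2$ is concentrated in even degrees, Proposition \ref{prop:Gamma-smash} identifies $\Gast(H\F_2 \wedge Y)$ with $\Gast H\F_2 \gammasmash \Gast Y$ (and similarly for $\Sigma^5 Y$), so the classical nullhomotopy transfers to $\Gast H\F_2 \gammasmash \Gast \Sigma w_1 \simeq 0$. Smashing further with $\Gast\tmf$ and using the adjunction $[X, \Gast H\F_2] \cong [\Gast H\F_2 \gammasmash X, \Gast H\F_2]_{\Gast H\F_2}$ then forces $w_1^*$ to vanish on $M$.

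With $w_1^* = 0$, the long exact sequence collapses to a short exact sequence of $A$-modules
\[
0 \to \Sigma^{6,3}M \xrightarrow{\pi_Z^*} N \xrightarrow{\iota_Z^*} M \to 0,
\]
where $\Sigma^{6,3}M$ denotes $M$ with bigrading shifted by $(6,3)$. Since $N$ is cyclic (generated by $1_N \in N^{0,0}$) and $\iota_Z^*$ is surjective, $M$ is cyclic, generated by $1_M := \iota_Z^*(1_N)$. The classes $Q_0, Q_1, Q_2$ annihilate $1_M$ by $A$-linearity of $\iota_Z^*$, since they annihilate $1_N$ in $N = A\sslash E(2)$. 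For $P_2^1$, the second part of Lemma \ref{lemma:cofibrefinitecomplex} (smashed with $\Gast\tmf$) identifies $\pi_Z^* \circ \iota_Z^*$ with multiplication by $P_2^1$ on $N$; evaluated at $1_N$ this gives $\pi_Z^*(1_M) = P_2^1 \cdot 1_N$, so
\[
P_2^1 \cdot 1_M = \iota_Z^*(P_2^1 \cdot 1_N) = \iota_Z^*(\pi_Z^*(1_M)) = 0
\]
by exactness at $N$. This yields a surjection $A\sslash F \to M$. Finally, since $M$ is cyclic, so is $\Sigma^{6,3}M$, so the image of $\pi_Z^*$ equals $A \cdot P_2^1 \cdot 1_N$, giving $M \cong N/(A \cdot P_2^1 \cdot 1_N) = A/(A\cdot E(2)^+ + A \cdot P_2^1) = A\sslash F$.

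The main obstacle is the vanishing of $w_1^*$: while the Adams-filtration argument is routine classically, transporting the nullhomotopy $H\F_2 \wedge w_1 \simeq 0$ through the functor $\Gast$ relies essentially on the smash-compatibility of Proposition \ref{prop:Gamma-smash} together with a careful change-of-rings argument for $\Gast H\F_2$-modules. Once that step is in hand, the rest reduces to bookkeeping with the resulting split short exact sequence.
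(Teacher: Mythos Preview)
Your argument is correct and reaches the same conclusion, but the route differs from the paper's in one essential respect: you establish $w_1^*=0$ \emph{a priori} by transporting the classical nullhomotopy $H\F_2\wedge w_1\simeq *$ through $\Gast$ (using Proposition~\ref{prop:Gamma-smash} and naturality of the lax monoidal structure), and then read off $M$ directly from the resulting short exact sequence. The paper instead never proves $w_1^*=0$ directly. It sets up a comparison diagram between the long exact sequence in cohomology and the short exact sequence $0\to\Sigma^{6,3}A\sslash F\to A\sslash E(2)\to A\sslash F\to 0$, obtains injectivity of $A\sslash F\to M$ by a diagram chase (using only the middle isomorphism and injectivity on the right), and then proves surjectivity by induction on degree; the vanishing of the boundary map falls out of the inductive hypothesis combined with injectivity of $\pi_Z^*$ in lower degrees. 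Your approach is more direct once the nullhomotopy is in hand, but that step requires care (checking that the connecting map of $\Gast H\F_2\gammasmash(-)$ applied to the cofiber sequence really is $\Gast$ of the classical connecting map, which uses that all terms have even $MU$-homology). The paper's bootstrap argument is more self-contained in that it needs only the algebraic content of Lemma~\ref{lemma:cofibrefinitecomplex}, not the Adams-filtration fact about $w_1$.
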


Note that the dual $F_{*,\star}$ of $F$ is
\[
\frac{\F_2[\tau][\tau_0, \tau_1, \tau_2, \xi_2 ]}
{\tau_0^2, \tau_1^2 = \tau \xi_2, \xi_2^2, \tau_2^2}.
\]

\begin{proof}
By the second part of Lemma \ref{lemma:cofibrefinitecomplex},
the composition
\[
A\sslash E(2)\lto H^{*,\star} \left( \Gamma_\star \tmf \gammasmash \Gamma_\star Z \right)  \lto 
H^{*,\star} \left( \Gamma_\star \tmf \gammasmash \Gamma_\star Y \right)
\]
factors through
$A\sslash F$.
Therefore, we have a diagram
\[
\xymatrix{
& 0 \\
H^{*,\star} \left( \Gamma_\star \tmf \gammasmash \Gamma_\star Y \right)
\ar[u] & 
A\sslash F \ar[l] \ar[u] \\
H^{*,\star} \left( \Gamma_\star \tmf \gammasmash \Gamma_\star Z \right)
\ar[u] &
A\sslash E(2) \ar[u] \ar[l]_-{\cong} \\
H^{*,\star} \Sigma^{6,3} 
\left( \Gamma_\star \tmf \gammasmash \Gamma_\star Y \right) 
\ar[u] &
\Sigma^{6,3} A\sslash F \ar[l] \ar[u] \\
\ar[u] & 0. \ar[u] 
}
\]
in which the middle horizontal arrow is an isomorphism
by Proposition \ref{prop:H-tmf-smash-Z}, 
and the top and bottom horizontal arrows
are the same.

A diagram chase shows that the bottom horizontal map (and therefore
the top as well) is an injection.  An inductive argument now
shows that those maps are surjections as well.
\end{proof}

\begin{prop}
\label{prop:H-tmf-smash-X}
$H^{*,\star} \left( \Gamma_\star \tmf \gammasmash \Gamma_\star X \right)$
is isomorphic to $A\sslash G$, where $G$ is the subalgebra of $A$ generated
by $Q_0$, $Q_2$, $P_2^1$, and $\Sq^2$.
\end{prop}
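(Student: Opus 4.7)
The approach is to mimic the proof of Proposition \ref{prop:H-tmf-smash-Y}, now using the cofiber sequence from Lemma \ref{lemma:cofibrefinitecomplex}(2) in place of part (3). First I would smash that cofiber sequence $\Sigma^{1,1}\Gast X \to \Gast X \to \Gast Y$ with $\Gast \tmf$, and apply $H^{*,\star}(-)$. The resulting long exact sequence relates $H^{*,\star}(\Gast\tmf \gammasmash \Gast X)$ to $H^{*,\star}(\Gast\tmf \gammasmash \Gast Y) \cong A\sslash F$ from Proposition \ref{prop:H-tmf-smash-Y}, and by the second assertion of Lemma \ref{lemma:cofibrefinitecomplex} the connecting map $\pi_Y^*$ is multiplication by $\Sq^2 = \xi_1^\vee$.

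Exactness of the long exact sequence then shows that $\iota_Y^*(\Sq^2 \cdot 1_F) = 0$, so the $A$-module map $A\sslash F \to H^{*,\star}(\Gast\tmf \gammasmash \Gast X)$ factors through the natural surjection $A\sslash F \twoheadrightarrow A\sslash G$, using $F \subset G$ (since $Q_1 = Q_0\Sq^2 + \Sq^2 Q_0$ lies in the subalgebra generated by $Q_0$ and $\Sq^2$). I would then assemble the four-row diagram
\[
\xymatrix{
& 0 \\
H^{*,\star}(\Gast\tmf \gammasmash \Gast X) \ar[u] & A\sslash G \ar[l] \ar[u] \\
H^{*,\star}(\Gast\tmf \gammasmash \Gast Y) \ar[u] & A\sslash F \ar[u] \ar[l]_-{\cong} \\
\Sigma^{2,1} H^{*,\star}(\Gast\tmf \gammasmash \Gast X) \ar[u] & \Sigma^{2,1} A\sslash G \ar[l] \ar[u] \\
\ar[u] & 0,
}
\]
in which the left column consists of four consecutive terms of the long exact sequence, the middle horizontal map is Proposition \ref{prop:H-tmf-smash-Y}, and the top and bottom horizontal maps are the same up to a shift. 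A diagram chase of the type used in Proposition \ref{prop:H-tmf-smash-Y} --- first establishing injectivity and then surjectivity by induction on total bidegree --- would upgrade the middle isomorphism to the top one.

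The essential technical input, and the main obstacle, is exactness of the right column, namely the short exact sequence of left $A$-modules
\[
0 \to \Sigma^{2,1} A\sslash G \xrightarrow{\cdot\Sq^2} A\sslash F \to A\sslash G \to 0.
\]
This amounts to exhibiting $G$ as a free right $F$-module on $\{1, \Sq^2\}$, and in particular to the vanishing of $(\Sq^2)^2 = \Sq^3\Sq^1 = \Sq^3 \cdot Q_0$ in $A\sslash F$, which is immediate from $Q_0 \in I(F)$. The resulting additive splitting $A\sslash F \cong A\sslash G \oplus \Sigma^{2,1} A\sslash G$ as $\F_2[\tau]$-modules --- the motivic counterpart of the classical splitting implicit in \cite[Proposition 1.7]{Wilson75} and \cite[Section 5]{IS} --- is exactly the input needed for the diagram chase.
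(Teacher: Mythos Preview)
Your proposal is correct and follows essentially the same argument as the paper, which simply states that the proof is identical to that of Proposition~\ref{prop:H-tmf-smash-Y} and displays the analogous diagram. You have in fact supplied more detail than the paper does---notably the verification that the right column is short exact via the identification of $G$ as a free right $F$-module on $\{1,\Sq^2\}$---and your bottom row correctly has $\Gast X$ (the paper's displayed diagram reads $\Gast Y$ there, which appears to be a typo).
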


Note that the dual $G_{*,\star}$ of $G$ is
\[
\frac{\F_2[\tau][\tau_0, \tau_1, \tau_2, \xi_1, \xi_2 ]}
{\tau_0^2 = \tau \xi_1, \xi_1^2, \tau_1^2 = \tau \xi_2, 
\xi_2^2, \tau_2^2}.
\]

\begin{proof}
The proof is essentially the same as the proof of 
Proposition \ref{prop:H-tmf-smash-Y}, using the diagram
\[
\xymatrix{
& 0 \\
H^{*,\star} \left( \Gamma_\star \tmf \gammasmash \Gamma_\star X \right)
\ar[u] &
 A\sslash G \ar[l] \ar[u] \\
H^{*,\star} \left( \Gamma_\star \tmf \gammasmash \Gamma_\star Y \right)
\ar[u] &
A\sslash F \ar[u] \ar[l]_-{\cong} \\
H^{*,\star} \Sigma^{2,1} 
\left( \Gamma_\star \tmf \gammasmash \Gamma_\star Y \right) 
\ar[u] &
\Sigma^{2,1} A\sslash G \ar[l] \ar[u] \\
 \ar[u] & 0. \ar[u]
}
\]
\end{proof} 

\begin{thm} \label{thm:homologymmf}
$H^{*,\star} \left( \Gamma_\star \tmf \right)$
is isomorphic to $A\sslash A(2)$. 
\end{thm}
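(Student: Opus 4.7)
The proof follows the template of Propositions \ref{prop:H-tmf-smash-Y} and \ref{prop:H-tmf-smash-X}. Smashing the first cofiber sequence of Lemma \ref{lemma:cofibrefinitecomplex} with $\Gast \tmf$ yields
\[
\Sigma^{3,2}\Gast \tmf \xrightarrow{\Gast \Sigma \nu} \Gast \tmf \xrightarrow{\iota_X} \Gast \tmf \gammasmash \Gast X \xrightarrow{\pi_X} \Sigma^{4,2} \Gast \tmf,
\]
and applying $H^{*,\star}(-)$ produces a long exact sequence connecting $H^{*,\star}(\Gast\tmf)$ with $H^{*,\star}(\Gast \tmf \gammasmash \Gast X) \cong A\sslash G$, the latter identified via Proposition \ref{prop:H-tmf-smash-X}.

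First I would verify that the composition
\[
A\sslash G \cong H^{*,\star}(\Gast \tmf \gammasmash \Gast X) \xrightarrow{\iota_X^*} H^{*,\star}(\Gast \tmf)
\]
factors through the quotient $A\sslash A(2)$. This uses the second part of Lemma \ref{lemma:cofibrefinitecomplex}: the composition $\Gast \nu \circ \pi_X$ acts as multiplication by $\Sq^4$, so any class in the image of $\iota_X^*$ is killed by $\Sq^4$ in addition to the relations already imposed by $G$. Since $A(2)$ is the subalgebra of $A$ generated by $G$ together with $\Sq^4$, this yields the required factorization $A\sslash A(2) \lto H^{*,\star}(\Gast \tmf)$.

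Next I would arrange the diagram
\[
\xymatrix{
& 0 \\
H^{*,\star}(\Gast \tmf) \ar[u] & A\sslash A(2) \ar[l] \ar[u] \\
H^{*,\star}(\Gast \tmf \gammasmash \Gast X) \ar[u] & A\sslash G \ar[u] \ar[l]_-{\cong} \\
H^{*,\star} \Sigma^{4,2}(\Gast \tmf) \ar[u] & \Sigma^{4,2} A\sslash A(2) \ar[l] \ar[u] \\
\ar[u] & 0 \ar[u]
}
\]
whose left column is exact, being a segment of the long exact sequence, and whose right column is the algebraic short exact sequence $0 \lto \Sigma^{4,2}(A\sslash A(2)) \xrightarrow{\cdot \Sq^4} A\sslash G \lto A\sslash A(2) \lto 0$. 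A diagram chase as in the proof of Proposition \ref{prop:H-tmf-smash-Y} shows that the top horizontal map is injective, and an inductive argument on bidegree then upgrades this to surjectivity.

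The main obstacle will be establishing the algebraic short exact sequence on the right, in particular verifying that right multiplication by $\Sq^4$ induces an injection $\Sigma^{4,2}(A\sslash A(2)) \hookrightarrow A\sslash G$. This ultimately reduces to an additive decomposition of the form $A(2) \cong G \oplus G \cdot \Sq^4$ (or some analogous splitting as left $G$-modules), which can be checked on the duals $G_{*,\star}$ and $A(2)_{*,\star}$ using the explicit presentations in Definition \ref{defn:A(n)-E(n)} and Proposition \ref{prop:H-tmf-smash-X}.
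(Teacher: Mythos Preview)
Your proposal is correct and follows essentially the same approach as the paper: the paper's proof invokes the template of Propositions~\ref{prop:H-tmf-smash-Y} and~\ref{prop:H-tmf-smash-X} and writes down exactly the diagram you gave, leaving the diagram chase and induction implicit. You have supplied more detail than the paper does (the factorization via $\Sq^4$, the algebraic short exact sequence, and the $G$-module splitting of $A(2)$), but the underlying argument is identical.
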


Equivalently,
the $\Gast H\F_2$-homology of $\Gast \tmf$ is
$A_{*,\star} \square_{A(2)_{*,\star}} \F[\tau]$.

\begin{proof}
The proof is essentially the same as the proofs of Propositions
\ref{prop:H-tmf-smash-Y} and \ref{prop:H-tmf-smash-X}, 
using the diagram
\[
\xymatrix{
& 0 \\
H^{*,\star} \left( \Gamma_\star \tmf \right) \ar[u] & 
A\sslash A(2) \ar[u] \ar[l] \\
H^{*,\star} \left( \Gamma_\star \tmf \gammasmash \Gamma_\star X \right)
\ar[u] &
A\sslash G \ar[u] \ar[l]_-{\cong} \\
H^{*,\star} \Sigma^{4,2} \left( \Gamma_\star \tmf \right) \ar[u] & 
\Sigma^{4,2} A\sslash A(2) \ar[u] \ar[l] \\
 \ar[u] & 0. \ar[u] \\
}
\]
\end{proof}

\begin{remark}
The same technique can be used to compute that
$H^{*, \star} \left( \Gast ko \right)$ equals $A\sslash A(1)$.
In this case, one uses the cofiber sequences
\[
\xymatrix@1{
H\Z \ar[r]^{2} & H\Z \ar[r] & H\F_2,
}
\]
\[
\xymatrix@1{
\Sigma^2 ku \ar[r]^{v_1} & ku \ar[r] & H\Z,
}
\]
and
\[
\xymatrix@1{
\Sigma^1 ko \ar[r]^\eta & ko \ar[r] & ku.
}
\]
See \cite[Section 5]{IS} or \cite[Proposition 1.7]{Wilson75} 
for similar arguments.
\end{remark}

\section{Comparison to $\C$-motivic homotopy theory}
\label{sctn:compare}

The goal of this section is to show that the $\infty$-category
of $\Gast S^0$-modules is equivalent to the cellular 2-complete
$\C$-motivic stable $\infty$-category $\Spt_\C$.
Equivalences in $\Spt_\C$ are detected by the bigraded stable
motivic homotopy groups, and every object is equivalent
to an object that can be built from spheres by homotopy colimits.

Recall that $\C$-motivic stable homotopy theory is enriched over
spectra in the following sense.  For any motivic spectra $X$ and
$Y$, there exists a mapping spectrum $F_s(X,Y)$.
If $X$ and $Y$ are homotopically well-behaved, then
$\pi_k F_s(X,Y)$ equals $[ \Sigma^{k,0} X, Y ]$.
The subscript $s$ indicates that we are considering only the
function object as a classical spectrum, not as a motivic spectrum.

Motivic spectra are also tensored over spectra, in the sense that
there is a motivic spectrum $K \otimes X$ for every spectrum $K$
and motivic spectrum $X$.

We will rely on the Betti realization functor
$B: \Spt_\C \to \Spt$ from the category of cellular 2-complete
$\C$-motivic spectra to the category of ordinary 2-complete spectra.
Recall that $B(S^{p,q})$ is $S^p$.  Also, the map
$\tau: S^{0,-1} \to S^{0,0}$ realizes to the identity
$S^0 \to S^0$.

\begin{lemma}
\label{lem:Betti-equiv}
If $p \leq q$, then 
Betti realization induces an equivalence
\[
\Map_{\Spt_\C}(S^{0,p}, S^{0,q}) \xrightarrow{\simeq} 
\Map_{\Spt}(S^0,S^0).
\]
\end{lemma}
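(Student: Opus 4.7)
The plan is to reduce to the case $p = 0$, $q = n \geq 0$, and then recognize the lemma as a known computation of the 2-complete $\C$-motivic stable stems in nonpositive weight.

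First, by bistability, smashing with $S^{0, -p}$ gives an equivalence
\[
\Map_{\Spt_\C}(S^{0, p}, S^{0, q}) \simeq \Map_{\Spt_\C}(S^{0, 0}, S^{0, n}),
\]
where $n := q - p \geq 0$. Since Betti realization sends every $S^{0, w}$ to $S^0$, this translation is compatible with $B$ up to the identity on $\Map_{\Spt}(S^0, S^0)$. It thus suffices to treat the case $p = 0$, $q = n \geq 0$.

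Next, identify the homotopy groups of both sides. Using the enrichment in spectra,
\[
\pi_k \Map_{\Spt_\C}(S^{0,0}, S^{0,n}) = [S^{k, 0}, S^{0, n}]_{\Spt_\C} = \pi_{k, -n}(S^{0, 0}_\C),
\]
while $\pi_k \Map_{\Spt}(S^0, S^0) = \pi_k(S^0)$. Betti realization induces a natural comparison $B \colon \pi_{k, -n}(S^{0,0}_\C) \lto \pi_k(S^0)$, which we want to show is an isomorphism.

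Finally, invoke the 2-complete $\C$-motivic Adams-Novikov spectral sequence of \cite{HKO11}, whose structure is precisely that of Proposition \ref{pro:anssgasts} applied to $S^0$: in weight $w$, the $E_2$-page is the classical Adams-Novikov $E_2$-page truncated by the condition $s + f \geq 2w$. When $w = -n \leq 0$ this truncation is vacuous, so the motivic ANSS agrees on the nose with the classical ANSS converging to $\pi_\ast(S^0)$. Since Betti realization sends $\tau$ to the identity, it intertwines the motivic and classical ANSS filtrations, inducing the identity on $E_2$-pages in these weights and hence the desired isomorphism on abutments.

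The main obstacle is this last step: the identification of the motivic ANSS in nonpositive weight with the classical one, together with its compatibility with Betti realization, relies on external motivic input not developed elsewhere in the paper. In the 2-complete cellular $\C$-motivic setting, however, this is standard and can be cited from \cite{HKO11} (see also \cite[Chapter 6]{StableStems}).
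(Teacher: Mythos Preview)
Your proposal is correct and follows essentially the same route as the paper's proof: both reduce the statement to the assertion that Betti realization induces an isomorphism $\pi_{s,w} \to \pi_s$ for $w \leq 0$, and both justify this via the structure of the $\C$-motivic Adams--Novikov spectral sequence (the paper cites \cite{GI} and \cite{HKO11} for this, just as you do). Your write-up is simply more explicit about the reduction and the vanishing of the truncation in nonpositive weight.
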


\begin{proof}
This corresponds to the observation in \cite{GI}
that Betti realization induces an isomorphism
$\pi_{s,w} \to \pi_s$ when $w \leq 0$, which in turn follows from
naive considerations of the $\C$-motivic Adams-Novikov spectral sequence.
\end{proof}

The key point of Lemma \ref{lem:Betti-equiv}
is that when $p \leq q$, a map $S^{0,p} \to S^{0,q}$
is uniquely determined up to homotopy by its Betti realization.

Now consider $\Z$ as a poset category with respect to $\leq$,
with symmetric-monoidal structure obtained from addition. 

\begin{lemma}
\label{lem:monoidal-S0,*}
Up to contractible choice,
there is a unique symmetric-monoidal functor 
$S^{0,\star}: N(\Z)\to \Spt_\C$ that sends $n \mapsto S^{0,n}$ 
such that $B(S^{0, \star})$ is the constant functor
$N(\Z)\to \Spt$ with value $S^0$.
The induced maps $S^{0,n} \to S^{0,n+1}$ are homotopic to $\tau$.
\end{lemma}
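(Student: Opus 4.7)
The plan is to reduce the statement to a claim about mapping spaces of motivic spheres, to which Lemma \ref{lem:Betti-equiv} applies directly. The key input is the universal property of $N(\Z)$ as a symmetric monoidal $\infty$-category: with $\Z$ viewed as a poset under $\leq$ with monoidal structure $+$, this category is freely generated by an invertible object $a$ (the image of $1$) together with a morphism $\iota\colon \mathbf{1}\to a$ (the image of $0\leq 1$). Concretely, for any symmetric monoidal $\infty$-category $\calC$, evaluation at $(1, 0\leq 1)$ induces an equivalence of $\infty$-groupoids between symmetric monoidal functors $N(\Z)\to\calC$ and pairs $(L,t)$ with $L \in \mathrm{Pic}(\calC)$ and $t\colon \mathbf{1}_{\calC}\to L$. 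I would either invoke this from the general theory of symmetric monoidal $\infty$-categories or derive it from the analogous universal property of $N(\mathbb{N})$ by passing to invertible objects.

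Granting this, since Betti realization is symmetric monoidal and sends $S^{0,1}$ to $S^0$, the constant symmetric monoidal functor $\mathrm{const}_{S^0}$ corresponds under the universal property to the pair $(\mathbf{1}_\Spt, \mathrm{id})$. The $\infty$-groupoid of lifts consisting of a symmetric monoidal functor $F\colon N(\Z)\to\Spt_\C$ with $F(1)\simeq S^{0,1}$ refining $\mathrm{const}_{S^0}$ therefore identifies with the homotopy fiber over $\mathrm{id}_{S^0}$ of
\[
\Map_{\Spt_\C}(S^{0,0}, S^{0,1}) \xrightarrow{B_*} \Map_{\Spt}(S^0, S^0).
\]
By Lemma \ref{lem:Betti-equiv}, applied with $p=0\leq 1=q$, this map is an equivalence, so the fiber is contractible. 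This yields simultaneously the existence and uniqueness up to contractible choice.

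For the final assertion, the morphism $F(n\leq n+1)\colon S^{0,n}\to S^{0,n+1}$ is, by the universal property, the tensor $t\otimes \mathrm{id}_{S^{0,n}}$, and so Betti-realizes to the identity. The map $\tau\colon S^{0,n}\to S^{0,n+1}$, i.e.\ the appropriate smash product of the defining $\tau\colon S^{0,-1}\to S^{0,0}$ with $S^{0,n+1}$, also Betti-realizes to the identity by hypothesis. Applying Lemma \ref{lem:Betti-equiv} once more with $p=n\leq n+1=q$, the homotopy class of any map $S^{0,n}\to S^{0,n+1}$ is determined by its Betti realization, so $F(n\leq n+1)\simeq\tau$.

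The main obstacle is the universal-property step. A self-contained alternative is to build $F$ by extending cell-by-cell over the nerve $N(\Z)$: fix $F$ on objects by $n\mapsto S^{0,n}$, on the generating morphisms $n\leq n+1$ by $\tau$, and then construct each higher coherence simplex inductively. The space of extensions at each stage is a homotopy fiber of a Betti map $\Map_{\Spt_\C}(S^{0,p}, S^{0,q})\to\Map_{\Spt}(S^0,S^0)$ with $p\leq q$ (the inequality coming from monotonicity of the indexing set along the simplex), hence contractible by Lemma \ref{lem:Betti-equiv}, and these contractible choices assemble into a contractible space of coherent lifts. Either way, Lemma \ref{lem:Betti-equiv} is the decisive ingredient.
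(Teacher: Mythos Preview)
Your proposal is correct, and your alternative approach is essentially the paper's argument. The paper phrases the construction in terms of colored operads: a lax symmetric monoidal functor $N(\Z)\to\Spt_\C$ is a functor of $\infty$-operads, so its coherence data lives in multi-mapping spaces $\Map_{\Spt_\C}(S^{0,i_1+\cdots+i_k},S^{0,n})$ with $i_1+\cdots+i_k\leq n$, each of which is equivalent via Betti realization to $\Map_\Spt(S^0,S^0)$ by Lemma~\ref{lem:Betti-equiv}. The coherence data for $\mathrm{const}_{S^0}$ then lifts uniquely up to contractible choice, and one checks afterward that the lax structure maps are equivalences. This is precisely your cell-by-cell alternative.

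Your primary approach via the universal property of $N(\Z)$ is genuinely different and more conceptual: it packages the entire tower of coherences into a single fiber computation. The cost is exactly what you flag---the universal property that symmetric monoidal functors $N(\Z)\to\calC$ are classified by pairs $(L,t)$ with $L\in\mathrm{Pic}(\calC)$ and $t\colon\mathbf{1}\to L$ is plausible but not standard, and would itself require an argument (likely an inductive one not so different from the operadic lifting). One small point: your reduction to the fiber of $\Map_{\Spt_\C}(S^{0,0},S^{0,1})\to\Map_\Spt(S^0,S^0)$ implicitly fixes both $L\simeq S^{0,1}$ and the identification $B(S^{0,1})\simeq S^0$; strictly speaking the full fiber over $(S^0,\id)$ also involves the fiber of $B\mathrm{Aut}(S^{0,1})\to B\mathrm{Aut}(S^0)$, but this too is contractible by Lemma~\ref{lem:Betti-equiv} with $p=q$, so the conclusion is unaffected.
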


\begin{proof}
A lax symmetric-monoidal functor between symmetric-monoidal 
$\infty$-cate\-gories $\calC\to \calD$ is the same as a functor 
$\calC^{\otimes} \to \calD^{\otimes}$
of the associated colored operads.
Thus the data of a lax symmetric-monoidal functor $F: N(\Z) \to \calC$ gives, for each tuple $(i_1,\ldots, i_k; n)$ with $i_1 + \ldots + i_k \leq n$, a point in the associated multi-mapping space $\Map_{\calC}(F(i_1)\otimes \ldots \otimes F(i_k), F(n))$, with coherence homotopies between the various ways to compose and permute the domain factors.

For our desired functor $S^{0,\star}$, the relevant multi-mapping spaces in $\Spt_\C$ are of the form $\Map_{\Spt_\C}(S^{0,i_1+\ldots + i_k}, S^{0,n})$. 
Betti realization induces an equivalence
\begin{equation}
\label{multimappingequiv}
\Map_{\Spt_\C}(S^{0,i_1+\cdots + i_k}, S^{0,n}) \xrightarrow{\simeq} \Map_{\Spt}(S^0,S^0)
\end{equation}
by Lemma \ref{lem:Betti-equiv}.

The constant functor $N(\Z) \to \Spt$ with value $S^0$ 
is lax symmetric-monoidal, since $S^0$ is an $E_\infty$-ring spectrum. 
The corresponding coherence data on multi-mapping spaces lifts uniquely (up to contractible choice) along the equivalences \eqref{multimappingequiv}, giving us the desired lax symmetric-monoidal functor $S^{0,\star}$. To see that it is symmetric-monoidal, 
it is enough to observe that the lax structure maps
\[
S^{0,i_1} \otimes \cdots \otimes S^{0,i_k}\to S^{0,i_1+\ldots + i_k}
\]
are in fact equivalences 
by Lemma \ref{lem:Betti-equiv}
because their Betti realizations are the equivalences
$S^0\otimes\cdots\otimes S^0 \simeq S^0$.

Finally, we must identify the maps $S^{0,n} \to S^{0,n+1}$.
The Betti realizations of these maps are the identity on $S^0$,
so they must be homotopic to $\tau$ by Lemma \ref{lem:Betti-equiv}.
\end{proof}

\begin{defn}
\label{defn:Omega}
Define $\Omega^{0,\star}_s: \Spt_\C \to \SptZ$ to be the 
filtered spectrum
\[
\cdots \to F_s(S^{0,n+1}, X) \to F_s(S^{0,n}, X) \to
F_s(S^{0,n-1}, X) \to \cdots,
\]
where the structure maps are determined by the maps
of $S^{0,\star}$ from Lemma \ref{lem:monoidal-S0,*}.
\end{defn}

The notation reflects the fact that 
the definition of $\Omega^{0,\star}_s$ is similar to the usual
desuspension functor, except that we use the spectrum-valued
function object instead of the internal function object.
Another possible description of $\Omega_s^{0,\star}$
is that it corresponds under the equivalence between functors $\Spt_\C \to \Spt^{\Z^{op}}$ and functors $\Spt_\C \times N(\Z)^{op}\to \Spt$ to
\[
\Spt_\C \times N(\Z)^{op} \to \Spt:
(X, n) \mapsto F_s(S^{0,n}, X).
\]
We will use this adjoint description below in 
Proposition \ref{prop:Omega-monoidal}.

\begin{remark}
\label{rem:htpy-gp-Omega}
Recall from Section \ref{sec:filteredspectra}
that the bigraded homotopy group $\pi_{i,j} (Y_\star)$
of a filtered spectrum $Y_\star$ is equal to
$\pi_i Y_j$.
For any motivic spectrum $X$, we then have that
$\pi_{i,j} ( \Omega^{0,\star}_s X)$
is equal to $\pi_i F_s(S^{0,j}, X)$, which equals
the motivic stable homotopy group $\pi_{i,j} X$.
This observation is precisely the point of the construction
of $\Omega^{0,\star}_s$; it records the motivic stable homotopy
groups of $X$ in a filtered spectrum.
\end{remark}

\begin{defn}
\label{defn:otimes-S0,*}
Define $- \otimes S^{0,\star}: \SptZ \to \Spt_\C$ to be the left
adjoint of $\Omega_s^{0,\star}$.
\end{defn}

More concretely, for any filtered spectrum $Y_\star$,
the motivic spectrum
$Y_\star \otimes S^{0,\star}$ equals
\[
\hocolim_{i \geq j} Y_i \otimes S^{0,j}.
\]

\begin{prop}
\label{prop:Omega-monoidal}
The functor $\Omega^{0,\star}_s$
is lax symmetric-monoidal with respect to Day convolution.
\end{prop}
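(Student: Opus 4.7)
The plan is to deduce the lax symmetric-monoidal structure on $\Omega^{0,\star}_s$ from a strong symmetric-monoidal structure on its left adjoint $- \otimes S^{0,\star}$. For any adjoint pair between presentable symmetric-monoidal $\infty$-categories, a strong symmetric-monoidal structure on the left adjoint induces a canonical lax symmetric-monoidal structure on the right adjoint (see, e.g., \cite[Corollary 7.3.2.7]{HA}). It therefore suffices to show that $- \otimes S^{0,\star}: \SptZ \to \Spt_\C$ is strong symmetric-monoidal.

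For this I would invoke the universal property of Day convolution (\cite[Section 2.2.6]{HA}, \cite{Glas16}): the $\infty$-category $\SptZ = \operatorname{Fun}(N(\Zop), \Spt)$ with Day convolution is the presentable stable symmetric-monoidal $\infty$-category freely generated by the symmetric-monoidal poset $N(\Zop)$. Hence colimit-preserving strong symmetric-monoidal functors $\SptZ \to \Spt_\C$ correspond to symmetric-monoidal functors $N(\Zop) \to \Spt_\C$. Lemma \ref{lem:monoidal-S0,*} supplies precisely such a functor $S^{0,\star}$, which extends uniquely to a strong symmetric-monoidal, colimit-preserving functor $\widetilde{S}: \SptZ \to \Spt_\C$. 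Evaluating on each bigraded sphere gives $\widetilde{S}(S^{0,n}) \simeq S^{0,n}$, matching the value of $- \otimes S^{0,\star}$ on spheres; since both functors preserve homotopy colimits and $\SptZ$ is generated under homotopy colimits by bigraded spheres (Lemma \ref{lemma:spheresgenerate}), they coincide as symmetric-monoidal functors.

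As a double check, one can verify strong monoidality by direct colimit manipulation. Expanding both sides of the comparison map for $Y_\star, Z_\star \in \SptZ$ using the formulas $(Y \otimes Z)_k = \hocolim_{i+j \geq k} Y_i \wedge Z_j$ and $Y \otimes S^{0,\star} = \hocolim_{i \geq j} Y_i \otimes S^{0,j}$, and applying the equivalence $S^{0,\ell_1} \otimes S^{0,\ell_2} \simeq S^{0,\ell_1+\ell_2}$ from Lemma \ref{lem:monoidal-S0,*}, both $(Y \otimes Z) \otimes S^{0,\star}$ and $(Y \otimes S^{0,\star}) \otimes (Z \otimes S^{0,\star})$ reduce, by cofinality in contractible interval-shaped fibers, to the homotopy colimit $\hocolim_{i+j \geq \ell} (Y_i \wedge Z_j) \otimes S^{0,\ell}$.

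The main obstacle is bookkeeping around variance: Lemma \ref{lem:monoidal-S0,*} produces $S^{0,\star}$ as a functor on $N(\Z)$, whereas filtered spectra are indexed by $N(\Zop)$. These posets are isomorphic as symmetric-monoidal categories (addition is unchanged, order reverses), but the identification determines the direction of the structure maps in $\Omega^{0,\star}_s X$, so it must be tracked carefully through the universal property. Apart from this bookkeeping, the argument is purely formal.
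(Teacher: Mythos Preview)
Your argument is correct in outline and takes a genuinely different route from the paper. The paper works directly with the right adjoint: it invokes the ``currying'' universal property of Day convolution, namely that a functor $\Spt_\C \to \Spt^{N(\Zop)}$ is lax symmetric monoidal if and only if the uncurried functor $\Spt_\C \times N(\Zop) \to \Spt$ is, and then factors the latter as $(\id \times S^{0,\star})$ followed by the mapping-spectrum functor $F_s$. You instead pass to the left adjoint, use the free-cocompletion universal property of Day convolution to endow $-\otimes S^{0,\star}$ with a strong symmetric-monoidal structure, and deduce lax monoidality of $\Omega^{0,\star}_s$ by doctrinal adjunction. Both arguments are short and formal; the paper's is slightly more direct since it never needs to identify the left adjoint, while yours has the pleasant side effect of showing that $-\otimes S^{0,\star}$ is strong monoidal, which is independently useful.

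One point to clean up: your statement of the universal property has the variance backwards. The category $\SptZ = \operatorname{Fun}(N(\Zop),\Spt) = \operatorname{Fun}(N(\Z)^{op},\Spt)$ is the free presentable stable symmetric-monoidal $\infty$-category on $N(\Z)$, not on $N(\Zop)$: the Yoneda embedding sends $w \in \Z$ to the representable $S^{0,w}$ of Definition~\ref{defn:SptZ-spheres}, and this is covariant in the standard order on $\Z$. So what you need is exactly a symmetric-monoidal functor $N(\Z) \to \Spt_\C$, and that is precisely what Lemma~\ref{lem:monoidal-S0,*} provides; no passage through the isomorphism $\Z \cong \Zop$ is required. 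Your final paragraph suggests you sensed something was off here, but the fix is simply to replace $N(\Zop)$ by $N(\Z)$ in your statement of the universal property, after which the bookkeeping concern disappears.
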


\begin{proof}
By formal properties of Day convolution $\Spt_\C \to \Spt^{N(\Z)^{op}}$ is lax symmetric monoidal precisely when the corresponding functor $\Spt_\C \times N(\Z)^{op} \to \Spt$ is. This can be written as the composition
\[
\Spt_\C \times N(\Z)^{op} \xrightarrow{\id \times S^{0,\star}} \Spt_\C\times \Spt_\C^{op} \xrightarrow{F_s} \Spt.
\]
The first functor is symmetric-monoidal by 
Lemma \ref{lem:monoidal-S0,*}, while the second functor
is lax symmetric-monoidal by the standard functoriality of
mapping spectra.
Therefore, the composition is lax symmetric-monoidal.
\end{proof}

One consequence of 
Proposition \ref{prop:Omega-monoidal}
is that $\Omega^{0,\star}_s X$ is an
$\Omega^{0,\star}_s(S^{0,0})$-module for all $X$.
Our next goal is to identify
$\Omega^{0,\star}_s(S^{0,0})$.

Betti realization induces a map
\[
F_s \left( S^{0,w}, MGL^{n+1} \right) \map
F(S^0, MU^{n+1}) = MU^{n+1},
\]
where the object on the right is the usual function object
for classical spectra.
Lemma \ref{lem:MGL-MU-compare} determines this map more
explicitly.

\begin{lemma}
\label{lem:MGL-MU-compare}
For all $w$ and $n$, 
the map $F_s(S^{0,n}, MGL^{n+1}) \map MU^{n+1}$
is the canonical map
\[
\tau_{\geq 2w} \left( MU^{n+1} \right) \map MU^{n+1}.
\]
\end{lemma}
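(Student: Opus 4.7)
The plan is to compute the classical homotopy groups of both sides, then identify the Betti realization map as the canonical connective cover using the universal property of Postnikov sections. (Note that the statement presumably intends $F_s(S^{0,w}, MGL^{n+1})$, to match the source of the map displayed just before the lemma.)

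First, by the spectrum-valued adjunction defining $F_s$, one has
\[
\pi_s F_s(S^{0,w}, MGL^{n+1}) \cong [S^{s,w}, MGL^{n+1}]_{\Spt_\C} = \pi_{s,w}(MGL^{n+1}),
\]
and Betti realization, together with $B(S^{0,w}) \simeq S^0$ and $B(MGL) \simeq MU$, identifies the map in the statement with the classical realization map $\pi_{s,w}(MGL^{n+1}) \to \pi_s(MU^{n+1})$.

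Second, I would invoke the $\C$-motivic Hopkins--Morel/Hu--Kriz--Ormsby computation: after $2$-completion, $\pi_{*,*}(MGL) \cong MU_*[\tau]$ with $\tau$ in bidegree $(0,-1)$ and the Lazard generators in bidegrees $(2i,i)$. The motivic lift of the classical Quillen decomposition $MU \wedge MU \simeq MU[t_1, t_2, \ldots]$, with $t_j$ in bidegree $(2^{j+1}-2, 2^j-1)$, then iterates to give an analogous description of $\pi_{*,*}(MGL^{n+1})$, from which one reads off that $\pi_{s,w}(MGL^{n+1})$ vanishes for $s < 2w$ and that Betti realization induces an isomorphism $\pi_{s,w}(MGL^{n+1}) \xrightarrow{\cong} \pi_s(MU^{n+1})$ for $s \geq 2w$.

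Combining the two previous steps, $F_s(S^{0,w}, MGL^{n+1})$ is $(2w-1)$-connected and its map to $MU^{n+1}$ is an isomorphism on $\pi_k$ for all $k \geq 2w$. By the universal property of the Postnikov connective cover, this map factors uniquely through $\tau_{\geq 2w}(MU^{n+1}) \to MU^{n+1}$, and the factorization is an equivalence since source and target have the same homotopy groups and the induced map on them is an isomorphism. The main obstacle is the motivic input used in the second step: the description of $\pi_{*,*}(MGL^{n+1})$ relies on Hopkins--Morel-type theorems, which is precisely where this otherwise formal comparison must borrow genuinely motivic machinery. Once that bigraded computation is in hand, the remaining argument is entirely formal.
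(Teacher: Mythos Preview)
Your proposal is correct and follows essentially the same approach as the paper: compute $\pi_{s,w}(MGL^{n+1})$ via the known description of $\pi_{*,*}MGL$ together with the wedge splitting of the higher smash powers, then identify the Betti realization map with the connective cover using that it is an isomorphism on $\pi_k$ for $k\geq 2w$ and zero below. One minor slip: the generators $t_j$ in bidegree $(2^{j+1}-2,2^j-1)$ are the $BP$ generators rather than the $MU$ ones, but since the argument only uses that all generators lie in bidegrees $(2k,k)$ this does not affect the proof.
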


\begin{proof}
Recall that $\pi_* MU$ is isomorphic to the Lazard ring
$\Z [ x_1, x_2, \ldots]$, where $x_i$ has degree $2i$ 
\cite{Milnor60}.
Also, $\pi_{*,*} MGL$ is isomorphic to $\Z[\tau][x_1, x_2, \ldots]$,
where $x_i$ has degree $(2i,i)$ \cite[Theorem 7]{HKO11}.   
Moreover, Betti realization takes $x_i$ to $x_i$.

The classical homotopy groups of $F_s \left( S^{0,w}, MGL \right)$
are equal to the motivic homotopy groups
$\pi_{*,w} \left( MGL \right)$.  From the description in
the previous paragraph, we see that
these groups are the same as the homotopy groups
of $\tau_{\geq 2w} \left( MU \right)$, and that
Betti realization
\[
F_s(S^{0,n}, MGL) \map MU
\]
induces an isomorphism on homotopy groups in degrees $2w$ and above.
This establishes the result for the case $n = 0$.

The proof for $n > 0$ is similar, using that
$MU^{n+1}$ splits as a wedge of even suspensions 
of $MU$ \cite[p.\ 87]{Adams74}, and
$MGL^{n+1}$ splits correspondingly as a wedge of
$\Sigma^{2k,k}$-suspensions of $MGL$ \cite{NSO15}.
\end{proof}

\begin{prop}
\label{prop:omega-gamma}
The filtered spectrum
$\Omega^{0,\star}_s S^{0,0}$ is equivalent
to $\Gast S^0$.
\end{prop}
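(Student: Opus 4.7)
The plan is to use the convergence of the (2-complete) $\C$-motivic Adams-Novikov spectral sequence, which says that $S^{0,0}$ is equivalent to the totalization $\Tot(MGL^{\bullet+1})$ in $\Spt_\C$. Applying $F_s(S^{0,w}, -)$ to this equivalence and using that $F_s$, being a homotopy limit in its second variable, commutes with totalization, I obtain
\[
F_s(S^{0,w}, S^{0,0}) \simeq \Tot\left( F_s(S^{0,w}, MGL^{\bullet+1}) \right).
\]
Now Lemma \ref{lem:MGL-MU-compare} identifies each $F_s(S^{0,w}, MGL^{n+1})$ with $\tau_{\geq 2w}(MU^{n+1})$ in a way compatible with the cosimplicial structure maps (since these are induced from the ring structure on $MGL$, which Betti-realizes to that of $MU$). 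Therefore
\[
F_s(S^{0,w}, S^{0,0}) \simeq \Tot\left( \tau_{\geq 2w}(MU^{\bullet+1}) \right) = \Gamma_w S^0
\]
pointwise in $w$.

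The next step is to promote this pointwise equivalence to an equivalence of filtered spectra. For this I need to check that the structure maps agree. The map $(\Omega^{0,\star}_s S^{0,0})_w \to (\Omega^{0,\star}_s S^{0,0})_{w-1}$ is by construction (Definition \ref{defn:Omega} and Lemma \ref{lem:monoidal-S0,*}) induced by $\tau \colon S^{0,w-1} \to S^{0,w}$. Under the identification via Lemma \ref{lem:MGL-MU-compare}, this corresponds termwise to the canonical connective-cover comparison map $\tau_{\geq 2w}(MU^{n+1}) \to \tau_{\geq 2(w-1)}(MU^{n+1})$, which is precisely what defines the structure maps in $\Gamma_\star S^0$. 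Compatibility of these identifications across $w$ can be read off from Lemma \ref{lem:MGL-MU-compare} by functoriality of Betti realization in $S^{0,\star}$, upgraded via Lemma \ref{lem:Betti-equiv} which ensures that the required coherence lifts uniquely.

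The main obstacle is the compatibility of the structure maps in $w$, not the pointwise identification. The pointwise statement is essentially a repackaging of Lemma \ref{lem:MGL-MU-compare}, but assembling these into a map of $N(\Zop)$-diagrams requires knowing that the identifications at different filtration levels are functorial in $w$. The cleanest way to arrange this is to work with the functor $\Spt_\C \times N(\Z)^{op} \to \Spt$ adjoint to $\Omega^{0,\star}_s$ (as discussed after Definition \ref{defn:Omega}), and to assemble the identifications of Lemma \ref{lem:MGL-MU-compare} into a natural transformation of such functors; Lemma \ref{lem:Betti-equiv} guarantees that such an upgrade exists and is unique up to contractible choice, exactly as in the proof of Lemma \ref{lem:monoidal-S0,*}. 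Once this coherent comparison is in place, the pointwise equivalences automatically upgrade to an equivalence in $\SptZ$.
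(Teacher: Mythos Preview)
Your approach is essentially the same as the paper's: both use convergence of the motivic Adams--Novikov spectral sequence to write $S^{0,0}\simeq\Tot(MGL^{\bullet+1})$, commute $F_s(S^{0,w},-)$ past $\Tot$, and then invoke Lemma~\ref{lem:MGL-MU-compare} to identify each level with $\Gamma_w S^0$. The paper's proof stops at the pointwise identification and leaves the compatibility in $w$ implicit, whereas you explicitly address how to upgrade to an equivalence of filtered spectra via Lemma~\ref{lem:Betti-equiv}; this extra care is reasonable and arguably fills a gap the paper glosses over.
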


\begin{proof}
Using that the motivic Adams-Novikov spectral sequence converges,
we have that $S^{0,0}$ is equivalent to 
$\Tot \left( MGL^{\bullet + 1} \right)$,
where $MGL^{\bullet + 1}$ is the standard
cosimplicial $MGL$-resolution of $S^{0,0}$.
Therefore, for each $w$, the spectrum
$\Omega^{0,w}_s S^{0,0}$ is equivalent to
$\Omega^{0,w}_s \Tot \left( MGL^{\bullet + 1} \right)$.
Using that homotopy limits commute with function spectra,
this object is equivalent to
$\Tot \left( F_s (S^{0,w}, MGL^{\bullet + 1}) \right)$.
Lemma \ref{lem:MGL-MU-compare}
identifies this spectrum with
$\Tot \left( \tau_{\geq 2w} MU^{\bullet + 1} \right)$,
which is precisely the definition of
$\Gamma_w S^0$.
\end{proof}

Propositions \ref{prop:Omega-monoidal} and \ref{prop:omega-gamma}
show that the functor 
$\Omega^{0, \star}_s$ takes values
in $\Gast S^0$-modules, rather than just filtered spectra.
From this perspective, we can define its left adjoint
as a functor from $\Gast S^0$-modules to 
$\C$-motivic spectra.

\begin{defn}
Let
\[
\Mod_{\Gast S^0} \to \Spt_\C:
Y_\star \mapsto Y_\star \gammaotimes S^{0,\star}
\]
be the left adjoint to the functor
$\Omega^{0,\star}_s: \Spt_\C \to \Mod_{\Gast S^0}$.
\end{defn}

We can describe $Y_\star \gammaotimes S^{0,\star}$ in more
concrete terms, although this description is not strictly
necessary.  It is the geometric realization of the simplicial bar construction
\[
\xymatrix{
\cdots\ar@<-.8ex>[r] \ar@<.8ex>[r] \ar[r] &
Y_\star \otimes \Gast S^0 \otimes S^{0,\star} 
\ar@<-.5ex>[r] \ar@<.5ex>[r] &
Y_\star \otimes S^{0,*}.
}
\]
In each degree, all but the last tensor symbol indicate Day convolution
of two filtered spectra, while the 
last tensor symbol represents the functor of
Definition \ref{defn:otimes-S0,*}.

Recall the suspension functor $\Sigma^{i,j}$ for
filtered spectra described in Section \ref{sec:filteredspectra}.

\begin{lemma}
\label{lem:gammaotimes-sphere}
The motivic spectrum
$(\Sigma^{i,j} \Gast S^0) \gammaotimes S^{0,\star}$ is
equivalent to $S^{i,j}$.
\end{lemma}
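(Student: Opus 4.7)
The plan is to invoke Yoneda: it suffices to produce a natural equivalence of mapping spaces
\[
\Map_{\Spt_\C}\bigl((\Sigma^{i,j}\Gast S^0) \gammaotimes S^{0,\star},\ X\bigr) \simeq \Map_{\Spt_\C}(S^{i,j},\ X)
\]
for every motivic spectrum $X$. First I would apply the adjunction of Definition \ref{defn:otimes-S0,*} to rewrite the left-hand side as $\Map_{\Mod_{\Gast S^0}}(\Sigma^{i,j}\Gast S^0,\ \Omega^{0,\star}_s X)$. Since the bigraded suspension in $\Mod_{\Gast S^0}$ is induced from the one on $\SptZ$, which is Day convolution with the filtered sphere $S^{i,j}$ of Definition \ref{defn:SptZ-spheres}, one has $\Sigma^{i,j}\Gast S^0 \simeq S^{i,j} \otimes \Gast S^0$, the free $\Gast S^0$-module on $S^{i,j}$. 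The free--forgetful adjunction then further reduces the mapping space to $\Map_{\SptZ}(S^{i,j},\ \Omega^{0,\star}_s X)$.

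Next, I would compute this directly from the explicit description of the filtered sphere: $S^{i,j}$ has value $S^i$ in filtrations $w \leq j$ with identity structure maps, and is contractible above. A compatible family out of $S^{i,j}$ is therefore determined, up to contractible choice, by its component at $w = j$. Combining this with the identification $(\Omega^{0,\star}_s X)_j \simeq F_s(S^{0,j}, X)$ from Definition \ref{defn:Omega} yields
\[
\Map_{\SptZ}(S^{i,j},\ \Omega^{0,\star}_s X) \simeq \Map_{\Spt}\bigl(S^i,\ F_s(S^{0,j}, X)\bigr) \simeq \Omega^i \Omega^\infty F_s(S^{0,j}, X).
\]
On the other hand, using $S^{i,j} \simeq \Sigma^{i,0} S^{0,j}$ in $\Spt_\C$, the right-hand side of the target equivalence unfolds to $\Omega^\infty F_s(S^{i,j}, X) \simeq \Omega^i \Omega^\infty F_s(S^{0,j}, X)$ as well. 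Both identifications are natural in $X$ by inspection, so Yoneda produces the desired equivalence in $\Spt_\C$.

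The main point requiring care, which I would settle first, is the free-module identification $\Sigma^{i,j}\Gast S^0 \simeq S^{i,j} \otimes \Gast S^0$ in $\Mod_{\Gast S^0}$; this rests on the compatibility of the bigraded suspension on the module category with Day convolution on the underlying filtered spectra, together with the observation that $\Gast S^0$ acts on itself as the unit module. Once this is in place, the remainder of the argument is a direct chase of adjunctions and explicit mapping space computations.
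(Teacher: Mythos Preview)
Your proposal is correct and follows essentially the same Yoneda/adjunction argument as the paper: pass via the adjunction to $\Omega^{0,\star}_s$, then use the explicit description of the filtered sphere $S^{i,j}$ to reduce to $\Map_{\Spt}(S^i, F_s(S^{0,j},X)) \simeq \Map_{\Spt_\C}(S^{i,j},X)$. The paper merely compresses your two adjunction steps (module-level adjunction, then free--forgetful) into a single line; one small citation slip is that the adjunction you invoke first is not Definition~\ref{defn:otimes-S0,*} (which is the $\SptZ$-level functor $-\otimes S^{0,\star}$) but the $\Mod_{\Gast S^0}$-level left adjoint $-\gammaotimes S^{0,\star}$ defined just after Proposition~\ref{prop:omega-gamma}.
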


\begin{proof}
By adjointness, maps
$(\Sigma^{i,j} \Gast S^0) \gammaotimes S^{0,\star} \map Y$
correspond to maps
$S^{i,j} \to \Omega^{0,\star}_s Y$ of filtered spectra.
From the definition of $S^{i,j}$ in
Definition \ref{defn:SptZ-spheres},
such maps correspond to maps
$S^i \to F_s(S^{0,j}, Y)$ in spectra, 
which in turn correspond to maps
$S^{i,j} \to Y$ in motivic spectra.
\end{proof}

\begin{prop}
\label{prop:htpy-gp-gammaotimes}
For any $\Gast S^0$-module $Y_\star$, the functor $- \gammaotimes S^{0,\star}$ induces an isomorphism
\[
\pi_{i,j}(Y_\star) = [\Sigma^{i,j} \Gamma_\star S^0, Y_\star] \to [S^{i,j}, Y_\star\gammaotimes S^{0,\star}] = \pi_{i,j}(Y_\star\gammaotimes S^{0,\star}).
\]
\end{prop}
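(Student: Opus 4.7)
The plan is to prove the stronger statement that the unit
\[
\eta_{Y_\star}\colon Y_\star \to \Omega^{0,\star}_s(Y_\star \gammaotimes S^{0,\star})
\]
of the adjunction is an equivalence of $\Gast S^0$-modules for every $Y_\star$. Granted this, the map in the proposition factors as the composite of $(\eta_{Y_\star})_*$ with the adjunction isomorphism, using Lemma \ref{lem:gammaotimes-sphere} to identify $(\Sigma^{i,j}\Gast S^0)\gammaotimes S^{0,\star}$ with $S^{i,j}$, so both factors are isomorphisms. Let $\calC \subset \Mod_{\Gast S^0}$ denote the full subcategory of $Y_\star$ for which $\eta_{Y_\star}$ is an equivalence. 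By Proposition \ref{prop:Gamma-generate} it suffices to show that $\calC$ is closed under homotopy colimits and contains the bigraded spheres $\Sigma^{p,q}\Gast S^0$.

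For closure under colimits, the left adjoint $-\gammaotimes S^{0,\star}$ preserves all colimits. For the right adjoint, in filtered degree $w$ we have $(\Omega^{0,\star}_s Z)_w = F_s(S^{0,w}, Z)$: since $S^{0,w}$ is compact in $\Spt_\C$ this preserves filtered colimits, and as a mapping spectrum it preserves (co)fiber sequences by stability. Because equivalences, filtered colimits, and cofiber sequences in $\SptZ$ are detected pointwise, $\Omega^{0,\star}_s$ itself preserves filtered colimits and cofiber sequences, and hence so does the composite $\Omega^{0,\star}_s \circ (-\gammaotimes S^{0,\star})$. A five-lemma argument on the long exact sequences of bigraded homotopy groups then gives closure of $\calC$ under filtered colimits and cofiber sequences, which in the stable setting is closure under all homotopy colimits.

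For the bigraded spheres, Lemma \ref{lem:gammaotimes-sphere} identifies $(\Sigma^{p,q}\Gast S^0)\gammaotimes S^{0,\star}$ with $S^{p,q}$, while in filtered degree $w$ the target reads
\[
F_s(S^{0,w}, S^{p,q}) \simeq \Sigma^p F_s(S^{0,w-q}, S^{0,0}) \simeq \Sigma^p \Gamma_{w-q} S^0
\]
by Proposition \ref{prop:omega-gamma}, matching the $w$-th component of $\Sigma^{p,q}\Gast S^0$. By naturality in $(p,q)$ it suffices to handle the case $(p,q) = (0,0)$, and this is the main obstacle: one must show that $\eta_{\Gast S^0}$ is, up to the canonical identification of Proposition \ref{prop:omega-gamma}, the identity self-map of $\Gast S^0$. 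I expect to handle this by noting that $\eta_{\Gast S^0}$ is automatically an $E_\infty$-ring map (both $-\gammaotimes S^{0,\star}$ and $\Omega^{0,\star}_s$ are lax symmetric monoidal), hence a $\Gast S^0$-module map, so it is determined up to homotopy by its value on the unit in $\pi_{0,0}(\Gast S^0) = [\Gast S^0, \Gast S^0]_{\Gast S^0}$, which a unit-preserving ring map must send to $1$; thus $\eta_{\Gast S^0} \simeq \id$ and the proof closes.
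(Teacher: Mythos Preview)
Your overall strategy matches the paper's: reduce to the free modules $\Sigma^{p,q}\Gast S^0$ using Lemma~\ref{lem:gammaotimes-sphere} and Proposition~\ref{prop:omega-gamma}, then close under homotopy colimits via Proposition~\ref{prop:Gamma-generate}. Framing this as ``the unit $\eta_{Y_\star}$ is an equivalence'' is equivalent to the paper's formulation, since applying $-\gammaotimes S^{0,\star}$ to $[\Sigma^{i,j}\Gast S^0, Y_\star]$ and then composing with the adjunction isomorphism recovers exactly $(\eta_{Y_\star})_*$.

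There is one flaw in your treatment of the base case. You justify that $\eta_{\Gast S^0}$ is an $E_\infty$-ring map by asserting that both adjoint functors are lax symmetric monoidal, but only $\Omega^{0,\star}_s$ has been shown to be lax (Proposition~\ref{prop:Omega-monoidal}); its left adjoint $-\gammaotimes S^{0,\star}$ is a priori only \emph{oplax} monoidal. Upgrading the left adjoint to strong (hence lax) monoidal is essentially equivalent to the equivalence of categories you are in the process of proving, so invoking it here is circular. The conclusion can be rescued: the lax unit structure map $\Gast S^0 \to \Omega^{0,\star}_s(S^{0,0})$ \emph{is} a ring map, and by general adjunction nonsense it factors as $\eta_{\Gast S^0}$ followed by $\Omega^{0,\star}_s$ applied to the oplax unit $L(\Gast S^0)\to S^{0,0}$; since the latter is an equivalence by Lemma~\ref{lem:gammaotimes-sphere}, $\eta_{\Gast S^0}$ is identified with the lax unit map and is therefore a ring map after all. (The paper's own proof is no more careful at this step: it simply computes that source and target of the map are both the motivic group $\pi_{i-p,j-q}$ without explicitly verifying that the specific map induced by the functor realizes that identification.)
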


\begin{proof}
First suppose that $Y_\star$ is of the form
$\Sigma^{p,q} \Gast S^0$.
Then
$Y_\star \gammaotimes S^{0,\star}$ is equal to
$S^{p,q}$ by Lemma \ref{lem:gammaotimes-sphere}, so
$\pi_{i,j} (Y_\star \gammaotimes S^{0,\star})$ equals
the motivic stable homotopy group $\pi_{i-p,j-q}$.
On the other hand, $\pi_{i,j} (Y_\star)$ is equal to
$\pi_{i-p}( \Gamma_{j-q} S^0)$, which
equals 
$\pi_{i-p} F_s(S^{0,j-q}, S^{0,0})$
by Proposition \ref{prop:omega-gamma}.
Finally, this last group also equals the 
motivic stable homotopy group $\pi_{i-p,j-q}$.
Thus, the proposition holds when $Y_\star$ is of the form
$\Sigma^{p,q} \Gast S^0$.

Let $\mathcal{C}$ be the class of all $\Gast S^0$-modules
$Y_\star$ for which the proposition is true.
We have just shown that $\mathcal{C}$ contains
$\Sigma^{p,q} \Gast S^0$.
Moreover, $\mathcal{C}$ is closed under arbitrary coproducts and filtered colimits
by compactness of the classical sphere $S^i$ and of 
the motivic sphere $S^{i,j}$.
Finally, if
\[
X_\star \to Y_\star \to Z_\star
\]
is a cofiber sequence of $\Gast S^0$-modules and any two
of $X_\star$, $Y_\star$, and $Z_\star$ belong to
$\mathcal{C}$, then the third belongs to $\mathcal{C}$ as well
by the five lemma applied to long exact sequences of homotopy
groups.

Finally, Proposition \ref{prop:Gamma-generate} implies that
$\mathcal{C}$ equals the entire category of
$\Gast S^0$-modules.
\end{proof}

\begin{thm}
The functors
$- \gammaotimes S^{0,\star}$ and
$\Omega^{0,\star}_s$ are inverse equivalences between
$\Spt_\C$ and $\Mod_{\Gast S^0}$.
\end{thm}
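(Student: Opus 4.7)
The plan is to verify that both the unit and the counit of the adjunction $(-\gammaotimes S^{0,\star}) \dashv \Omega^{0,\star}_s$ are equivalences, and to check each by comparing bigraded homotopy groups. This suffices because equivalences in $\Mod_{\Gast S^0}$ are detected by $\pi_{*,*}$ via Proposition \ref{prop:Gamma-generate}, and equivalences in cellular $2$-complete $\C$-motivic spectra are detected by bigraded motivic stable homotopy groups, as recalled at the start of this section.

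First, I would handle the unit $\eta_{Y_\star}: Y_\star \to \Omega^{0,\star}_s(Y_\star \gammaotimes S^{0,\star})$ for a $\Gast S^0$-module $Y_\star$. Remark \ref{rem:htpy-gp-Omega} identifies $\pi_{i,j}\Omega^{0,\star}_s(Y_\star \gammaotimes S^{0,\star})$ with $\pi_{i,j}(Y_\star \gammaotimes S^{0,\star})$. Under this identification, the adjunction bijection $[Y_\star \gammaotimes S^{0,\star}, X] \cong [Y_\star, \Omega^{0,\star}_s X]$, applied with $X = Y_\star \gammaotimes S^{0,\star}$, turns post-composition with $\eta_{Y_\star}$ into the operation $f \mapsto f \gammaotimes S^{0,\star}$; combined with Lemma \ref{lem:gammaotimes-sphere} identifying $(\Sigma^{i,j}\Gast S^0) \gammaotimes S^{0,\star}$ with $S^{i,j}$, this is exactly the map of Proposition \ref{prop:htpy-gp-gammaotimes}, which is an isomorphism. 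Hence $\pi_{i,j}(\eta_{Y_\star})$ is an isomorphism and $\eta_{Y_\star}$ is an equivalence.

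For the counit $\varepsilon_X: \Omega^{0,\star}_s(X) \gammaotimes S^{0,\star} \to X$ for a cellular $\C$-motivic spectrum $X$, an analogous argument works. Applying Proposition \ref{prop:htpy-gp-gammaotimes} with $Y_\star = \Omega^{0,\star}_s X$ identifies $\pi_{i,j}(\Omega^{0,\star}_s(X) \gammaotimes S^{0,\star})$ with $\pi_{i,j}(\Omega^{0,\star}_s X)$, and Remark \ref{rem:htpy-gp-Omega} identifies the latter with $\pi_{i,j}(X)$. The same adjointness computation as above shows that the resulting composite isomorphism is precisely $\pi_{i,j}(\varepsilon_X)$, so $\varepsilon_X$ induces isomorphisms on all bigraded motivic homotopy groups and is therefore an equivalence.

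The substantive content beyond the cited results is the naturality check that the map of Proposition \ref{prop:htpy-gp-gammaotimes} genuinely equals $\pi_{*,*}$ applied to the unit of the adjunction. This is the main (but purely formal) obstacle; it comes down to the fact that, under the adjunction, the identity of $Y_\star \gammaotimes S^{0,\star}$ in the left-hand hom-set corresponds to $\eta_{Y_\star}$ in the right-hand hom-set, so that the external smashing construction $f \mapsto f \gammaotimes S^{0,\star}$ used in the statement of Proposition \ref{prop:htpy-gp-gammaotimes} is the same, via Lemma \ref{lem:gammaotimes-sphere}, as post-composition with $\eta_{Y_\star}$. Once this is recorded, both halves of the equivalence fall out.
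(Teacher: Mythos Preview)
Your proposal is correct and follows essentially the same approach as the paper: both verify the unit and counit of the adjunction on bigraded homotopy groups using Remark~\ref{rem:htpy-gp-Omega} and Proposition~\ref{prop:htpy-gp-gammaotimes}. You are simply more explicit than the paper about the formal check that the map of Proposition~\ref{prop:htpy-gp-gammaotimes} agrees with $\pi_{*,*}$ of the unit (and dually for the counit), which the paper leaves implicit.
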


\begin{proof}
Since equivalences in $\Spt_\C$ and in $\Mod_{\Gast S^0}$
are both detected by bigraded homotopy groups, it suffices to
show that the counit map
$(\Omega^{0, \star}_s X) \gammaotimes S^{0,\star} \to X$
induces an isomorphism of motivic stable homotopy groups, and that
the unit map
$Y_\star \to \Omega^{0,\star} ( Y_\star \gammaotimes S^{0,\star} )$
induces an isomorphism of bigraded homotopy groups of
filtered spectra.
These claims follow from
Remark \ref{rem:htpy-gp-Omega} and Proposition \ref{prop:htpy-gp-gammaotimes}.
\end{proof}

\bibliographystyle{amsalpha}
\bibliography{Cmmf}

\end{document}